\DeclarePairedDelimiter{\ceil}{\lceil}{\rceil}
\DeclarePairedDelimiter{\floor}{\lfloor}{\rfloor}
\newcommand{\R}{\mathbb{R}}
\newcommand{\N}{\mathbb{N}}
\newcommand{\Z}{\mathbb{Z}}
\newcommand{\E}{\mathbb{E}}
\renewcommand{\P}{\mathbb{P}}
\newcommand{\Exp}[1]{\E \!\left[ #1 \right]}
\newcommand{\EXp}[1]{\E [ #1 ]}
\newcommand{\EXP}[1]{\E \big[  #1 \big]}
\newcommand{\EXPP}[1]{\E \Big[  #1 \Big]}
\newcommand{\qand}{\qquad\text{and}}
\newcommand{\qandq}{\qquad\text{and}\qquad}
\newcommand{\norm}[1]{\Vert #1 \Vert}
\newcommand{\lrnorm}[1]{\left\| #1 \right\|}
\newcommand{\abs}[1]{| #1 |}  
\newcommand{\Abs}[1]{\big| #1 \big|}
\newcommand{\ABs}[1]{\Big| #1 \Big|}
\renewcommand{\lll}{\langle} 
\newcommand{\rrr}{\rangle}
\newcommand{\llll}{\big\langle}
\newcommand{\rrrr}{\big\rangle}
\renewcommand{\log}{\operatorname{ln}}
\newcommand{\note}[1]
\newtheorem{lemma}{Lemma}[section]
\newtheorem{cor}[lemma]{Corollary}
\newtheorem{theorem}[lemma]{Theorem}
\newtheorem{prop}[lemma]{Proposition}
\newtheorem{setting}[lemma]{Setting}
\begin{document}
\title{Lower error bounds for the stochastic \\
gradient descent optimization algorithm: \\
Sharp convergence rates for slowly \\
 and fast decaying learning rates}

\author{Arnulf Jentzen$^1$
	and 
	Philippe von Wurstemberger$^{2}$
	\bigskip
	\\
	\small{$^1$Department of Mathematics, 
		ETH Zurich,}
	\\
		\small{e-mail: 
		arnulf.jentzen@sam.math.ethz.ch}
	\smallskip
\\
\small{$^2$Department of Mathematics, 
	ETH Zurich,}
\\
\small{e-mail: 
	vwurstep@student.ethz.ch}	
}

\maketitle

\begin{abstract}
The stochastic gradient descent (SGD) optimization algorithm plays a central role in a series of machine learning applications. 
The scientific literature provides a vast amount of upper error bounds for the SGD method.
Much less attention as been paid to proving lower error bounds for the SGD method.
It is the key contribution of this paper to make a step in this direction.
More precisely, in this article we establish for every $\gamma, \nu \in (0,\infty)$ essentially matching lower and upper bounds for the mean square error of the SGD process with learning rates $(\frac{\gamma}{n^\nu})_{n \in \N}$ associated to a simple quadratic stochastic optimization problem.
This allows us to precisely quantify the mean square convergence rate of the SGD method in dependence on the asymptotic behavior of the learning rates.
\end{abstract}

\newpage

\tableofcontents

\section{Introduction}
\label{sect:intro}
The stochastic gradient descent (SGD) optimization algorithm plays a central role in machine learning and, in particular, deep learning applications such as image analysis and speech recognition 
(cf., e.g., 
\cite{
GravesMohamedHinton13, 
HintonETAL12, 
KrizhevskySutskeverHinton12,
Ruder16}). 
It is therefore important to analyze and quantify the convergence speed of the SGD method. 
There is a vast amount of scientific literature investigating and providing upper bounds for the SGD method and modifications of it
(cf., e.g.,
\cite{
BachMoulines11,
BachMoulines13,
Bottou12,
BottouBousquet11,
BottouCourtisNocedal16,
BottouLeCun04,
ChauKumarRasonyiSabanis17,
DereichGronbach17,
DieuleveutDurmusBach17,
LiTaiE15,
Murata98,
NguyenETAL18,
TangMonteleoni15} and cf., e.g., \cite{mypaper} for a more comprehensive review of the literature). 
Much less attention has been paid to proving lower error bounds for the SGD method, that is, to quantifying the best possible speed of convergence which the SGD method can achieve
(cf., e.g., 
\cite{
AgarwalBottou14,
LanZhou17,
MullerGBRitter08,
RakhlinShamirSridharan12,
WoodworthSrebro16}).
It is the key contribution of this paper to make a step in this direction.

To be more specific, in this paper we precisely quantify the speed of convergence of the SGD process in the case of a simple quadratic stochastic optimization problem (cf.\ item~\eqref{intro_thm:item1} in Theorem~\ref{intro_thm} below) for both slowly as well as fast decaying learning rates. 
In particular, in Theorem~\ref{intro_thm} below we provide for every $\gamma, \nu \in (0,\infty)$ essentially matching upper and lower bounds for the root mean square distance between the global minimum of the considered stochastic optimization problem and the SGD process with the learning rates $(\frac{\gamma}{n^\nu})_{n \in \N}$.
\begin{theorem}
\label{intro_thm}
Let $d \in \N$, $\alpha, \gamma,\nu  \in (0,\infty)$, $\xi \in \R^d$, 
let $\lll \cdot, \cdot \rrr \colon \R^d \times \R^d \to \R$ be the $d$-dimensional Euclidean scalar product, 
let $\lrnorm{\cdot} \colon \R^d \to [0,\infty)$ be the $d$-dimensional Euclidean norm, 
let $(\Omega, \mathcal{F}, \P)$ be a probability space, 
let $X_n \colon \Omega \to \R^d$, $n \in \N$, be i.i.d.\ random variables with
$\Exp{\norm{X_1}^2} < \infty$
and
$\P(X_1= \EXp{X_1}) < 1$,
let $(r_{\varepsilon, i})_{\varepsilon \in (0,\infty), i \in \{0,1\}} \subseteq \R$ satisfy for all $\varepsilon \in (0,\infty)$, $i \in \{0,1\}$ that
\begin{equation}
r_{\varepsilon, i} =
\begin{cases}
	\nicefrac{\nu}{2} 																			&\colon \nu < 1 \\
	\min \{\nicefrac{1}{2}, \gamma \alpha + (-1)^{i} \varepsilon\}	&\colon \nu = 1  \\
	0																										&\colon \nu > 1,
\end{cases}
\end{equation}
let $F = ( F(\theta,x) )_{(\theta,x) \in \R^d \times \R^d} \colon \R^d \times \R^d \to \R$ and $f \colon \R^d \to \R$ be the functions which satisfy for all $\theta, x  \in \R^d$ that
\begin{equation}
\label{intro_thm:ass1}
F(\theta,x) = \tfrac{\alpha}{2} \norm{\theta-x}^2 \qandq f(\theta) = \EXP{F(\theta,X_1)},
\end{equation}
and let $\Theta \colon \N_0 \times \Omega \to \R^d$ be the stochastic process which satisfies for all $n \in \N$ that 
\begin{equation}
\label{intro_thm:ass2}
\begin{split}
\Theta_0 = \xi \qandq \Theta_n = \Theta_{n-1} - \tfrac{\gamma}{n^\nu} (\nabla_\theta F) (\Theta_{n-1},X_n).
\end{split}
\end{equation}
Then 
\begin{enumerate}[(i)]
\item \label{intro_thm:item1}
there exists a unique $\vartheta \in \R^d$ such that 
$
\{\theta \in \R^d  \colon  f(\theta) = \inf\nolimits_{w \in \R^d} f(w)  \} = \{ \vartheta \},
$

\item \label{intro_thm:item2}
for every $\varepsilon \in (0,\infty)$ there exist $c_0,c_1 \in (0,\infty)$ such that for all $n \in \N$ it holds that
\begin{equation}
c_0n^{-r_{\varepsilon,0}}
\leq
\big(\EXP{\norm{\Theta_n-\vartheta}^2}\big)^{\nicefrac{1}{2}} 
\leq
c_1  n^{-r_{\varepsilon,1}},
\end{equation}

and
\item \label{intro_thm:item3}
for every $\varepsilon \in (0,\infty)$ there exist $C_0,C_1 \in (0,\infty)$ such that for all $n \in \N$ it holds that
\begin{equation}
C_0  n^{-2r_{\varepsilon,0}} 
\leq
\Exp{f(\Theta_n)} - f(\vartheta) 
\leq
C_1 n^{-2r_{\varepsilon,1}}.
\end{equation}
\end{enumerate}
\end{theorem}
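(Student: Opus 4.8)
The plan is to reduce the entire statement to a sharp two-sided asymptotic analysis of a single deterministic scalar recursion for the mean square error, and then to carry out this analysis separately in the three regimes $\nu<1$, $\nu=1$, and $\nu>1$.

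First I would identify the minimizer and reduce items~(ii)--(iii) to a single quantity. Since $F(\theta,x)=\tfrac{\alpha}{2}\norm{\theta-x}^2$, writing $\vartheta:=\Exp{X_1}$ one computes $f(\theta)=\Exp{F(\theta,X_1)}=\tfrac{\alpha}{2}\norm{\theta-\vartheta}^2+\tfrac{\alpha}{2}\sigma^2$, where $\sigma^2:=\Exp{\norm{X_1-\vartheta}^2}$. Thus $f$ is a strictly convex quadratic with unique global minimum at $\vartheta$, which proves item~(i), and moreover $f(\theta)-f(\vartheta)=\tfrac{\alpha}{2}\norm{\theta-\vartheta}^2$. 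Consequently, with $v_n:=\Exp{\norm{\Theta_n-\vartheta}^2}$ we have exactly $\Exp{f(\Theta_n)}-f(\vartheta)=\tfrac{\alpha}{2}v_n$, so that item~(ii) is the assertion $c_0n^{-r_{\varepsilon,0}}\le v_n^{1/2}\le c_1n^{-r_{\varepsilon,1}}$ and item~(iii) is its square (up to the factor $\tfrac{\alpha}{2}$). Hence it suffices to establish matching two-sided polynomial bounds for $v_n$.

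Second I would derive and solve the recursion for $v_n$. Using $(\nabla_\theta F)(\theta,x)=\alpha(\theta-x)$ and the identity $a_k+b_k=1$ with $a_k:=1-\tfrac{\gamma\alpha}{k^\nu}$ and $b_k:=\tfrac{\gamma\alpha}{k^\nu}$, the SGD update rewrites as the affine error recursion $\Theta_n-\vartheta=a_n(\Theta_{n-1}-\vartheta)+b_n(X_n-\vartheta)$. Since $X_n$ is independent of $\Theta_{n-1}$ and $\Exp{X_n-\vartheta}=0$, the cross term vanishes in expectation, giving the deterministic scalar recursion $v_n=a_n^2v_{n-1}+b_n^2\sigma^2$ with $v_0=\norm{\xi-\vartheta}^2$. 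The nondegeneracy hypothesis $\P(X_1=\Exp{X_1})<1$ yields $\sigma^2>0$, which is precisely what will force the lower bounds. Solving this linear recursion explicitly gives $v_n=P_nv_0+\sigma^2\sum_{k=1}^{n}b_k^2\,\tfrac{P_n}{P_k}$ with $P_n:=\prod_{k=1}^{n}a_k^2$, reducing everything to the asymptotics of $P_n$ and of this weighted sum.

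Third I would analyze $P_n$ and the sum through $\log P_n=2\sum_{k=1}^{n}\log(1-\gamma\alpha k^{-\nu})$ and integral comparison. For $\nu<1$ the sum $\sum k^{-\nu}$ diverges polynomially, so $P_n$ decays like $\exp(-\tfrac{2\gamma\alpha}{1-\nu}n^{1-\nu})$; the initial term is then negligible, the weighted noise sum is dominated by its terms near $k=n$, and the local-equilibrium balance $v\approx a_n^2v+b_n^2\sigma^2$ gives $v_n\asymp\tfrac{\gamma\alpha\sigma^2}{2}n^{-\nu}$, i.e.\ rate $r=\tfrac{\nu}{2}$. For $\nu=1$ one obtains $P_n\asymp n^{-2\gamma\alpha}$ while the weighted sum behaves like $\sum k^{2\gamma\alpha-2}$, which converges, grows logarithmically, or grows polynomially according as $\gamma\alpha<\tfrac12$, $=\tfrac12$, or $>\tfrac12$, producing the rate $\min\{\tfrac12,\gamma\alpha\}$ with a logarithmic correction exactly at $\gamma\alpha=\tfrac12$. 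For $\nu>1$ the sum $\sum k^{-\nu}$ converges, so $P_n\to P_\infty\in(0,\infty)$ and the weighted noise sum converges, whence $v_n\to v_\infty\in(0,\infty)$ and $v_n$ is sandwiched between positive constants (rate $r=0$). In each regime I would make the bounds rigorous by induction on $n$, choosing constants $C_0<C_1$ and verifying the one-step inequalities $a_n^2C_i(n-1)^{-2r}+b_n^2\sigma^2\lessgtr C_in^{-2r}$ via the expansions $a_n^2=1-2\gamma\alpha n^{-\nu}+O(n^{-2\nu})$ and $(1-\tfrac1n)^{-2r}$.

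The main obstacle is the sharp handling of the critical regime $\nu=1$: the decay exponent of $v_n$ depends continuously on the product $\gamma\alpha$ and degenerates into a logarithmic factor precisely at $\gamma\alpha=\tfrac12$, so no single clean power law holds uniformly in $\gamma\alpha$. This is exactly why the statement introduces the auxiliary $\varepsilon$ and the quantities $r_{\varepsilon,0}=\min\{\tfrac12,\gamma\alpha+\varepsilon\}\ge r_{\varepsilon,1}=\min\{\tfrac12,\gamma\alpha-\varepsilon\}$: the $\pm\varepsilon$ slack absorbs both the logarithmic correction at the boundary and the implicit constants, which is what permits genuinely matching polynomial lower and upper bounds. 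The remaining effort is the careful verification of the inductive one-step inequalities together with the precise control of $P_n$ through its logarithm, which are routine but delicate Taylor and integral estimates.
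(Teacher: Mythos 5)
Your proposal takes essentially the same route as the paper: the bias--variance reduction $\Exp{f(\Theta_n)}-f(\vartheta)=\tfrac{\alpha}{2}\,\Exp{\norm{\Theta_n-\vartheta}^2}$, the explicit solution of the affine error recursion for $v_n$ (the paper's Proposition on explicit formulas for the SGD process), and a three-regime analysis in which the $\pm\varepsilon$ slack absorbs the critical behaviour at $\nu=1$, $\gamma\alpha=\nicefrac{1}{2}$; the paper makes the bounds rigorous by the same mix of one-step induction on the recursion and product/exponential estimates that you outline. One small point to watch: for $\nu=1$ and $\gamma\alpha\in\N$ the factor $1-\tfrac{\gamma\alpha}{l}$ vanishes at $l=\gamma\alpha$, so $P_n=0$ (your claim $P_n\asymp n^{-2\gamma\alpha}$ fails and $P_n/P_k$ is ill-defined); one should write the weights directly as $\prod_{l=k+1}^{n}(1-\tfrac{\gamma\alpha}{l})^2$ and base the lower bound on the surviving noise terms with $k\geq\gamma\alpha$, exactly as the paper does by starting its products at an index $m>\gamma\alpha-1$, after which your argument goes through unchanged.
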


Theorem~\ref{intro_thm} is an immediate consequence of Theorem~\ref{main_theorem} below, which is the main result of this article.
We now roughly describe the dependence, exhibited in Theorem~\ref{intro_thm}, of the root mean square convergence rate of the SGD process on the learning rates.
In the case of slowly decaying  learning rates 
(corresponding to the case $\nu< 1$ in Theorem~\ref{intro_thm}) 
the convergence rate of the SGD process does not depend on the size of the learning rates 
(corresponding to the parameter $\gamma$ in Theorem~\ref{intro_thm}). 
In this case we note that faster decay of the learning rates (corresponding to larger $\nu$ in Theorem~\ref{intro_thm}) results in a higher convergence rate of the SGD process.
In the case of fast decaying but large learning rates 
(corresponding to the case $\nu = 1$ and $\gamma > \nicefrac{1}{(2\alpha)}$ in Theorem~\ref{intro_thm})
 the SGD process attains the optimal convergence rate of $\nicefrac{1}{2}$.
In the case of fast decaying and small learning rates
(corresponding to the case $\nu = 1$ and $\gamma \leq \nicefrac{1}{(2\alpha)}$ in Theorem~\ref{intro_thm})
the convergence rate of the SGD process depends on the size of the learning rates. 
In this case we observe that the smaller the learning rates are
(corresponding to smaller $\gamma$ in Theorem~\ref{intro_thm})
the lower is the resulting convergence rate.
Note that this is contrary to the effect observed above in the case of slowly decaying learning rates.
The phenomenon that the convergence rate increases as $\nu$ increases (so that the learning rates get smaller) but also increases as $\gamma$ increases (so that the learning rates get larger), roughly speaking, arises from the interplay of two sources of errors: The error due to the randomness in the SGD method (which gets smaller when the learning rates get smaller) and the error due to the fact that the deterministic gradient method does not reach in finite time the whole infinite time interval of the underlying gradient flow (which gets smaller when the learning rates get larger).
Finally, in the case of very fast decaying learning rates (corresponding to the case $\nu > 1$ in Theorem~\ref{intro_thm}) the SGD process fails to converge to the global minimum of the objective function.


The remainder of this paper is organized as follows.
In Section~\ref{sect:basic} we introduce the setting of the stochastic optimization problem considered in this paper and we establish a few basic properties for the objective function, the loss function, and the SGD process.
In Section~\ref{sect:UB} we derive upper bounds for the root mean square error of the SGD process.
In Section~\ref{sect:LB} we first establish in Subsections~\ref{subsect:LB_slow_and_fast}--\ref{subsect:LB_very_fast} lower bounds for the root mean square error of the SGD process which essentially match the upper bounds of Section~\ref{sect:UB}. 
Then, in Subsection~\ref{subsect:main_result}, we combine the upper and lower bounds of this article in Theorem~\ref{main_theorem} and thereby obtain a sharp convergence rate of the SGD process in dependence of the learning rates. 
Theorem~\ref{intro_thm} above is an immediate consequence of Theorem~\ref{main_theorem}.

\section[Basic properties for stochastic gradient descent (SGD)]{Basic properties for the stochastic gradient descent (SGD) optimization method}
\label{sect:basic}

In Section~\ref{sect:UB} and Section~\ref{sect:LB} below we provide a detailed error analysis for the SGD optimization method in the case of a simple quadratic loss function; cf., particularly, Theorem~\ref{main_theorem} below. 
In this section we introduce the setting of the considered optimization problem (see Setting~\ref{setting} in Subsection~\ref{subsect:setting} below) and we establish some elementary properties for the optimization problem under consideration (see Lemma~\ref{properties_loss} below) and the associated SGD process (see Proposition~\ref{explicit_expressions} below). These elementary properties will be repeatedly used in the convergence rate proofs in our detailed error analysis in Section~\ref{sect:UB} and Section~\ref{sect:LB} below.

\subsection{Setting}
\label{subsect:setting}
Throughout this article the following setting is frequently used.

\begin{setting}
\label{setting}
Let $d \in \N$, $\alpha, \gamma, \nu\in (0,\infty)$, $\xi \in \R^d$, 
let $\lll \cdot, \cdot \rrr \colon \R^d \times \R^d \to \R$ be the $d$-dimensional Euclidean scalar product, 
let $\lrnorm{\cdot} \colon \R^d \to [0,\infty)$ be the $d$-dimensional Euclidean norm, 
let $(\Omega, \mathcal{F}, \P)$ be a probability space, 
let $X_n \colon \Omega \to \R^d$, $n \in \N$, be i.i.d.\ random variables with $\Exp{\norm{X_1}^2} < \infty$, 
let $F = ( F(\theta,x) )_{(\theta,x) \in \R^d \times \R^d} \colon \R^d \times \R^d \to \R$ and $f \colon \R^d \to \R$ be the functions which satisfy for all $\theta, x  \in \R^d$ that
\begin{equation}
\label{setting:eq1}
F(\theta,x) = \tfrac{\alpha}{2} \norm{\theta-x}^2 \qandq f(\theta) = \EXP{F(\theta,X_1)},
\end{equation}
and let $\Theta \colon \N_0 \times \Omega \to \R^d$ be the stochastic process which satisfies for all $n \in \N$ that 
\begin{equation}
\label{setting:eq2}
\begin{split}
\Theta_0 = \xi \qandq \Theta_n = \Theta_{n-1} - \tfrac{\gamma}{n^\nu} (\nabla_\theta F) (\Theta_{n-1},X_n).
\end{split}
\end{equation}
\end{setting}

\subsection{Basic properties of the objective and the loss function}
\label{subsect:basic}

In this subsection we establish in Lemma~\ref{properties_loss} below some basic properties for the objective and the loss function of the optimization problem under consideration (cf.\ Setting~\ref{setting} above). Our proof of Lemma~\ref{properties_loss} employs the elementary and well-known results in Lemma~\ref{L2_distance} and Lemma~\ref{der_of_norm}. For completeness we also provide the proofs of Lemma~\ref{L2_distance} and Lemma~\ref{der_of_norm} here. 

\subsubsection{Bias-variance decomposition of the mean square error}
\label{subsubsect:L2_distance}

\begin{lemma}
\label{L2_distance}
Let $d \in \N$, $\vartheta \in \R^d$,
let $\lll \cdot, \cdot \rrr \colon \R^d \times \R^d \to \R$ be a scalar product,
let $\lrnorm{\cdot} \colon \R^d \to [0,\infty)$ be the function which satisfies for all $v \in \R^d$ that $\lrnorm{v} = \sqrt{\lll v, v \rrr}$,
let $(\Omega, \mathcal{F}, \P)$ be a probability space, and
let $Z \colon \Omega \to \R^d$ be a random variable with $\EXp{\norm{Z}} < \infty$.
Then 
\begin{equation}
\Exp{\norm{Z-\vartheta}^2} = \Exp{\norm{Z-\EXp{Z}}^2} + \norm{\EXp{Z} - \vartheta}^2.
\end{equation}
\end{lemma}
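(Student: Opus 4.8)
The plan is to expand $\norm{Z - \vartheta}^2$ around the mean $\EXp{Z}$ using the bilinearity of the scalar product and to show that the resulting cross term has vanishing expectation; this is the standard bias--variance decomposition. First I would note that the hypothesis $\EXp{\norm{Z}} < \infty$ ensures that $\EXp{Z} \in \R^d$ is well-defined, so that the pointwise decomposition $Z - \vartheta = (Z - \EXp{Z}) + (\EXp{Z} - \vartheta)$ is meaningful.

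Next, I would use the bilinearity and symmetry of $\lll \cdot, \cdot \rrr$ to obtain, for every $\omega \in \Omega$, the pointwise identity
\[
\norm{Z - \vartheta}^2 = \norm{Z - \EXp{Z}}^2 + 2 \lll Z - \EXp{Z}, \EXp{Z} - \vartheta \rrr + \norm{\EXp{Z} - \vartheta}^2.
\]
I would then take expectations term by term. The vector $w \coloneqq \EXp{Z} - \vartheta \in \R^d$ is deterministic, so the linear functional $v \mapsto \lll v, w \rrr$ commutes with the expectation, which gives $\Exp{\lll Z - \EXp{Z}, w \rrr} = \lll \EXp{Z - \EXp{Z}}, w \rrr = \lll \EXp{Z} - \EXp{Z}, w \rrr = 0$. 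Since the last summand above is deterministic, its expectation equals $\norm{\EXp{Z} - \vartheta}^2$, and combining these yields the assertion.

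The only point requiring care — and the step I would treat most carefully — is integrability. The cross term $\lll Z - \EXp{Z}, w \rrr$ is an affine function of $Z$ and is therefore integrable under the sole hypothesis $\EXp{\norm{Z}} < \infty$, so its expectation is legitimately $0$ without invoking any second-moment bound. The two squared-norm expectations may a priori equal $+\infty$, but they differ only by the integrable cross term and the finite constant $\norm{\EXp{Z} - \vartheta}^2$; hence the identity holds in $[0,\infty]$ and becomes the finite equality precisely when $\Exp{\norm{Z}^2} < \infty$, which is the regime in which the lemma is later applied.
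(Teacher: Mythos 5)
Your proposal is correct and follows essentially the same route as the paper's proof: the same pointwise expansion of $\norm{Z-\vartheta}^2$ via bilinearity, the same vanishing of the cross term $\lll Z - \EXp{Z}, \EXp{Z}-\vartheta\rrr$ after taking expectations, and the same integrability check (the paper does it via Cauchy--Schwarz, which is what your ``affine in $Z$'' argument amounts to). If anything, your closing remark is slightly more careful than the paper, which writes its chain of equalities without explicitly noting that the two squared-norm expectations may be $+\infty$ under the stated first-moment hypothesis, so that the identity is a priori an equality in $[0,\infty]$.
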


\begin{proof}[Proof of Lemma~\ref{L2_distance}]
Observe that the hypothesis that $\EXp{\norm{Z}} < \infty$ and the Cauchy-Schwarz inequality ensure that 
\begin{equation}
\begin{split}
\EXP{ \abs{\lll Z - \EXp{Z}, \EXp{Z}-\vartheta\rrr} } 
&\leq 
\EXP{\norm{ Z - \EXp{Z}}\norm{\EXp{Z}-\vartheta } }\\ 
&\leq
 (\Exp{\norm{ Z}} + \norm{\EXp{Z}})\norm{\EXp{Z}-\vartheta } < \infty.
\end{split}
\end{equation}
The linearity of the expectation hence shows that
\begin{equation}
\begin{split}
&\EXP{\norm{Z-\vartheta}^2} 
=
\EXP{\norm{(Z - \EXp{Z}) + (\EXp{Z}-\vartheta)}^2} \\
&=
\EXP{\norm{Z - \EXp{Z}}^2 + 2\lll Z - \EXp{Z}, \EXp{Z}-\vartheta\rrr + \norm{\EXp{Z}-\vartheta}^2} \\
&=
\EXP{\norm{Z - \EXp{Z}}^2} + 2\lll \EXp{Z} - \EXp{Z}, \EXp{Z}-\vartheta\rrr + \norm{\EXp{Z}-\vartheta}^2 \\
&=
\EXP{\norm{Z - \EXp{Z}}^2} + \norm{\EXp{Z}-\vartheta}^2.
\end{split}
\end{equation}
The proof of Lemma~\ref{L2_distance} is thus completed.
\end{proof}

\subsubsection{On the derivative of the Euclidean norm}
\label{subsubsect:der_of_norm}

\begin{lemma}[Derivative of the Euclidean norm]
\label{der_of_norm}
Let $d \in \N$, $\vartheta \in \R^d$,
let $\lrnorm{\cdot} \colon \R^d \to [0,\infty)$ be the $d$-dimensional Euclidean norm, 
and let $f \colon \R^d \to \R$ be the function which satisfies for all $\theta \in \R^d$ that
\begin{equation}
f(\theta) = \norm{\theta- \vartheta}^2.
\end{equation}
Then it holds for all $\theta \in \R^d$ that $f \in C^{\infty}(\R^d, \R)$  and
\begin{equation}
(\nabla f)(\theta) = 2 (\theta - \vartheta).
\end{equation}
\end{lemma}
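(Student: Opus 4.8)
The plan is to prove the two claims—smoothness and the gradient formula—by writing the Euclidean norm squared in coordinates and differentiating directly, since the function $f(\theta) = \norm{\theta - \vartheta}^2$ is a concrete polynomial in the components of $\theta$.

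First I would expand $f$ in coordinates. Writing $\theta = (\theta^{(1)}, \dots, \theta^{(d)})$ and $\vartheta = (\vartheta^{(1)}, \dots, \vartheta^{(d)})$, the definition of the Euclidean norm gives
\begin{equation}
f(\theta) = \sum_{k=1}^{d} \big(\theta^{(k)} - \vartheta^{(k)}\big)^2.
\end{equation}
This exhibits $f$ as a polynomial of degree two in the $d$ real variables $\theta^{(1)}, \dots, \theta^{(d)}$. Since every polynomial map $\R^d \to \R$ is infinitely differentiable, this immediately yields $f \in C^{\infty}(\R^d, \R)$, settling the first assertion with essentially no work.

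Next I would compute the partial derivatives. For each fixed index $j \in \{1, \dots, d\}$, differentiating the sum term by term with respect to $\theta^{(j)}$ leaves only the $k = j$ summand, giving
\begin{equation}
\frac{\partial f}{\partial \theta^{(j)}}(\theta) = 2\big(\theta^{(j)} - \vartheta^{(j)}\big).
\end{equation}
Assembling these partials into the gradient vector then produces $(\nabla f)(\theta) = 2(\theta - \vartheta)$, which is exactly the claimed identity.

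I do not expect any genuine obstacle here, as this is a routine computation; the only point requiring a modicum of care is to make the bookkeeping of the coordinate expansion and the term-by-term differentiation clean, so that the cancellation of all off-diagonal terms in the partial derivative is transparent. This completes the proof plan.
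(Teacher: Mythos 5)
Your proposal is correct and follows essentially the same route as the paper's own proof: expand $f(\theta)=\sum_{k=1}^{d}(\theta^{(k)}-\vartheta^{(k)})^2$ in coordinates, conclude smoothness from the fact that $f$ is a polynomial, and compute the gradient componentwise as $\tfrac{\partial f}{\partial \theta^{(j)}}(\theta)=2(\theta^{(j)}-\vartheta^{(j)})$. Nothing is missing.
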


\begin{proof}[Proof of Lemma~\ref{der_of_norm}]
Throughout this proof let $\vartheta_1,\ldots, \vartheta_d \in \R$ satisfy that $\vartheta = (\vartheta_1,\ldots, \vartheta_d)$.
Note that the fact that for all $\theta = (\theta_1,\ldots,\theta_d) \in \R^d$ it holds that
\begin{equation}
f(\theta) = \sum_{i = 1}^d \big[\theta_i - \vartheta_i \big]^2
\end{equation}
implies that for all $\theta = (\theta_1,\ldots,\theta_d) \in \R^d$ it holds that $f \in C^{\infty}(\R^d, \R)$ and
\begin{equation}
(\nabla f)(\theta) =
\begin{pmatrix}
\big(\tfrac{\partial f}{ \partial \theta_1}\big)(\theta) \\
\vdots \\
\big(\tfrac{\partial f}{ \partial \theta_d}\big)(\theta)
\end{pmatrix} 
=
\begin{pmatrix}
2(\theta_1- \vartheta_1)\\
\vdots \\
2(\theta_d- \vartheta_d)
\end{pmatrix} 
=
2(\theta-\vartheta).
\end{equation}
The proof of Lemma~\ref{der_of_norm} is thus completed.
\end{proof}

\subsubsection{Basic properties of the objective and the loss function}
\label{subsubsect:properties_loss}

\begin{lemma}
\label{properties_loss}
Assume Setting~\ref{setting}.
Then
\begin{enumerate}[(i)]
\item \label{properties_loss:item1}
it holds for all $\theta \in \R^d$ that
$
f(\theta)
= 
\tfrac{\alpha}{2} \norm{\theta- \EXp{X_1}}^2  + 
\tfrac{\alpha}{2} \, \EXP{\norm{X_1 - \EXp{X_1}}^2},
$

\item \label{properties_loss:item2}
it holds that
$
\{\theta \in \R^d  \colon  f(\theta) = \inf\nolimits_{w \in \R^d} f(w)  \} = \{ \EXp{X_1} \},
$

\item \label{properties_loss:item3}
it holds for all $\theta, x \in \R^d$ that
$
(\nabla_\theta F)(\theta,x) = \alpha(\theta-x) ,
$

\item \label{properties_loss:item4}
it holds for all $\theta \in \R^d$ that
$
(\nabla f)(\theta) =  \Exp{(\nabla_{\theta}F)(\theta, X_1)} = \alpha(\theta - \Exp{X_1}),
$

\item \label{properties_loss:item5}
it holds for all $\theta \in \R^d$ that
$
\lll \theta - \EXp{X_1},(\nabla f)(\theta) \rrr   =  \alpha \norm{\theta - \EXp{X_1}}^2,
$

\item \label{properties_loss:item6}
it holds for all $\theta \in \R^d$ that
$
\norm{(\nabla f)(\theta)} =    \alpha \norm{\theta - \EXp{X_1}},
$
and

\item \label{properties_loss:item7}
it holds for all $\theta \in \R^d$ that
\begin{equation}
\EXP{\norm{(\nabla_\theta F) (\theta,X_1) - (\nabla f)(\theta)}^2} = \alpha^2 \, \EXP{ \norm{X_1 - \EXp{X_1}}^2}.
\end{equation}
\end{enumerate}
\end{lemma}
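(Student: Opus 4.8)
The plan is to obtain all seven assertions from the two preliminary results---the bias-variance decomposition in Lemma~\ref{L2_distance} and the gradient formula in Lemma~\ref{der_of_norm}---combined with elementary computations. To begin with item~\eqref{properties_loss:item1}, I would rewrite $f(\theta) = \tfrac{\alpha}{2}\EXP{\norm{X_1 - \theta}^2}$ and apply Lemma~\ref{L2_distance} to the random variable $Z = X_1$ with the fixed vector $\theta$ playing the role of $\vartheta$; note that $\Exp{\norm{X_1}} < \infty$ (so that $\EXp{X_1}$ is well defined) follows from the hypothesis $\Exp{\norm{X_1}^2} < \infty$ via the Cauchy-Schwarz (or Jensen) inequality. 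This yields $\EXP{\norm{X_1 - \theta}^2} = \EXP{\norm{X_1 - \EXp{X_1}}^2} + \norm{\EXp{X_1} - \theta}^2$, and multiplying by $\tfrac{\alpha}{2}$ gives item~\eqref{properties_loss:item1}. Item~\eqref{properties_loss:item2} is then immediate: in the representation from item~\eqref{properties_loss:item1} the term $\tfrac{\alpha}{2}\EXP{\norm{X_1 - \EXp{X_1}}^2}$ is a nonnegative constant independent of $\theta$, so $f$ attains its infimum precisely when $\norm{\theta - \EXp{X_1}}^2 = 0$, that is, if and only if $\theta = \EXp{X_1}$.

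For the gradient computations I would first establish item~\eqref{properties_loss:item3} by viewing $x \in \R^d$ as a fixed parameter and applying Lemma~\ref{der_of_norm} to the map $\theta \mapsto \norm{\theta - x}^2$, whose gradient is $2(\theta - x)$; multiplying by $\tfrac{\alpha}{2}$ gives $(\nabla_\theta F)(\theta, x) = \alpha(\theta - x)$. For item~\eqref{properties_loss:item4} I would verify the two claimed equalities separately, so that no interchange of expectation and differentiation is required. On the one hand, differentiating the representation from item~\eqref{properties_loss:item1} via Lemma~\ref{der_of_norm} (the constant term having vanishing gradient) yields $(\nabla f)(\theta) = \alpha(\theta - \EXp{X_1})$. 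On the other hand, item~\eqref{properties_loss:item3} and the linearity of the expectation give $\Exp{(\nabla_\theta F)(\theta, X_1)} = \Exp{\alpha(\theta - X_1)} = \alpha(\theta - \EXp{X_1})$.

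The remaining items follow by direct substitution of item~\eqref{properties_loss:item4}. For item~\eqref{properties_loss:item5} I would compute $\lll \theta - \EXp{X_1}, (\nabla f)(\theta) \rrr = \alpha \lll \theta - \EXp{X_1}, \theta - \EXp{X_1} \rrr = \alpha \norm{\theta - \EXp{X_1}}^2$, and item~\eqref{properties_loss:item6} follows from $\norm{(\nabla f)(\theta)} = \norm{\alpha(\theta - \EXp{X_1})} = \alpha \norm{\theta - \EXp{X_1}}$ using $\alpha > 0$. Finally, for item~\eqref{properties_loss:item7} I would combine items~\eqref{properties_loss:item3} and~\eqref{properties_loss:item4} to obtain the pathwise identity $(\nabla_\theta F)(\theta, X_1) - (\nabla f)(\theta) = \alpha(\theta - X_1) - \alpha(\theta - \EXp{X_1}) = -\alpha(X_1 - \EXp{X_1})$, take squared Euclidean norms, and apply the expectation.

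Since every step reduces to one of the two preliminary lemmas or to elementary algebra, there is no genuine obstacle here; the only points requiring a little care are the integrability remark ensuring that $\EXp{X_1}$ is well defined and the decision in item~\eqref{properties_loss:item4} to prove the two equalities independently rather than justifying a dominated-convergence interchange of $\nabla$ and $\E$.
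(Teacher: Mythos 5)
Your proposal is correct and follows essentially the same route as the paper's own proof: item~\eqref{properties_loss:item1} via Lemma~\ref{L2_distance} applied to $Z = X_1$ with $\theta$ in the role of $\vartheta$, items~\eqref{properties_loss:item3}--\eqref{properties_loss:item4} via Lemma~\ref{der_of_norm} (differentiating the explicit representation from item~\eqref{properties_loss:item1} rather than interchanging $\nabla$ and $\E$, exactly as the paper does), and the remaining items by direct substitution. Your added remark that $\Exp{\norm{X_1}} < \infty$ follows from $\Exp{\norm{X_1}^2} < \infty$ is a small but welcome piece of explicitness that the paper leaves implicit.
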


\begin{proof}[Proof of Lemma~\ref{properties_loss}]
First, note that the hypothesis that $\Exp{\norm{X_1}^2} < \infty$ and Lemma~\ref{L2_distance} (with $\vartheta = \theta$, $Z = X_1$ in the notation of Lemma~\ref{L2_distance}) ensure that for all $\theta \in \R^d$ it holds that
\begin{equation}
\begin{split}
f(\theta)
&= 
\EXP{F(\theta,X_1)}
=
\tfrac{\alpha}{2} \,\EXP{ \norm{ X_1 - \theta}^2} \\
&=  
\tfrac{\alpha}{2}\left( \EXP{\norm{X_1 - \EXp{X_1}}}^2 + \norm{\theta- \EXp{X_1}}^2 \right).
\end{split}
\end{equation}
This establishes item~\eqref{properties_loss:item1}.
Next observe that item~\eqref{properties_loss:item1} proves item~\eqref{properties_loss:item2}.
In addition, note that Lemma~\ref{der_of_norm} proves that for all $\theta ,x\in \R^d$ it holds that
\begin{equation}
(\nabla_\theta F)(\theta,x) 
= 
\tfrac{\alpha}{2} ( 2 (\theta- x))
=
\alpha(\theta-x).
\end{equation}
This establishes item~\eqref{properties_loss:item3}.
Moreover, observe that Lemma~\ref{der_of_norm}, item~\eqref{properties_loss:item1}, and item~\eqref{properties_loss:item3} ensure that for all $\theta \in \R^d$ it holds that
\begin{equation}
\begin{split}
(\nabla f)(\theta) 
&= 
\tfrac{\alpha}{2}( 2(\theta - \EXp{X_1}) )
=
\alpha(\theta - \Exp{X_1}) \\
&=
\Exp{\alpha(\theta - X_1)}
=
\Exp{(\nabla_{\theta}F)(\theta, X_1)}.
\end{split}
\end{equation}
This proves item~\eqref{properties_loss:item4}.
Next note that item~\eqref{properties_loss:item4} implies items~\eqref{properties_loss:item5}--\eqref{properties_loss:item6}.
Moreover, note that  item~\eqref{properties_loss:item3} and item~\eqref{properties_loss:item4} demonstrate that for all $\theta \in \R^d$ it holds that
\begin{equation}
\begin{split}
\EXP{\norm{(\nabla_\theta F) (\theta,X_1) - (\nabla f)(\theta)}^2} 
&=
\EXP{\norm{\alpha(\theta - X_1) - \alpha(\theta - \EXp{X_1})}^2} \\
&=
\alpha^2 \,\EXP{ \norm{X_1 - \EXp{X_1}}^2}.
\end{split}
\end{equation}
This establishes item~\eqref{properties_loss:item7}.
The proof of Lemma~\ref{properties_loss} it thus completed.
\end{proof}

\subsection{On explicit formulas for the SGD process}
\label{subsect:explicit}

In this subsection we establish in Proposition~\ref{explicit_expressions} below a few explicit formulas for the SGD process in \eqref{setting:eq2}. 
Our proof of Proposition~\ref{explicit_expressions} employs the elementary and well-known result for affine recursions in Lemma~\ref{recursive_eq} below. 
For completeness we also present the proof of Lemma~\ref{recursive_eq} here.

\subsubsection{On a recursive equality}
\label{subsubsect:recursive_eq}

\begin{lemma}
\label{recursive_eq}
Let $ d\in \N$, 
$ (\alpha_{n})_{n \in \N}\subseteq \R$, $(\beta_{n})_{n \in \N} \subseteq \R^d$, $(e_n)_{n \in \N_0} \subseteq \R^d $ satisfy for all $n \in \N$ that
\begin{equation}
\label{recursive_eq:ass1}
e_{n} = \alpha_{n} e_{n-1} + \beta_{n} .
\end{equation}
Then it holds for all $n \in \N_0$ that
\begin{equation}
\label{recursive_eq:concl1}
e_{n} = \left[\prod_{l = 1}^{n}\alpha_l \right] e_0 + \sum_{k = 1}^{n}\left(\left[\prod_{l = k+1}^{n}\alpha_l \right] \beta_k\right).
\end{equation}
\end{lemma}

\begin{proof}[Proof of Lemma~\ref{recursive_eq}]
We prove (\ref{recursive_eq:concl1}) by induction on $n \in \N_0$. For the base case $n = 0$ observe that
\begin{equation}
\left[\prod_{l = 1}^{0}\alpha_l \right] e_0 + \sum_{k = 1}^{0}\left(\left[\prod_{l = k+1}^{0}\alpha_l \right] \beta_k\right) = 1 \cdot e_0 + 0 = e_0.
\end{equation}
This establishes (\ref{recursive_eq:concl1}) in the case case $n = 0$. 
For the induction step $\N_0 \ni (n-1) \to n \in \N$ note that (\ref{recursive_eq:ass1}) implies that for all $n \in \N$ with 
$
e_{n-1} = \left[\prod_{l = 1}^{n-1}\alpha_l \right] e_0 + \sum_{k = 1}^{n-1}\left(\left[\prod_{l = k+1}^{n-1}\alpha_l \right] \beta_k\right)
$
it holds that
\begin{equation}
\begin{split}
e_{n} 
&= 
\alpha_{n} e_{n-1} + \beta_{n} \\
&=
\alpha_{n} \left(\left[\prod_{l = 1}^{n-1}\alpha_l \right] e_0 + \sum_{k = 1}^{n-1}\left(\left[\prod_{l = k+1}^{n-1}\alpha_l \right] \beta_k\right)\right) + \beta_{n} \\
&=
\left[\prod_{l = 1}^{n}\alpha_l \right] e_0 + \sum_{k = 1}^{n-1}\left(\left[\prod_{l = k+1}^{n}\alpha_l \right] \beta_k\right) + \left[\prod_{l = n+1}^{n} \alpha_l \right] \beta_{n} \\
&=
\left[\prod_{l = 1}^{n}\alpha_l \right] e_0 + \sum_{k = 1}^{n}\left(\left[\prod_{l = k+1}^{n}\alpha_l \right] \beta_k\right).
\end{split}
\end{equation}
Induction thus establishes (\ref{recursive_eq:concl1}).
The proof of Lemma~\ref{recursive_eq} is thus completed.
\end{proof}

\subsubsection{Explicit formulas for the SGD process}
\label{subsubsect:explicit}

\begin{prop}
\label{explicit_expressions}
Assume Setting~\ref{setting}.
Then
\begin{enumerate}[(i)]
\item \label{explicit_expressions:item1}
it holds for all $n \in \N$ that
$
\Theta_n= (1 - \tfrac{\gamma\alpha}{n^\nu})\Theta_{n-1} + \tfrac{\gamma\alpha}{n^\nu} X_n,
$

\item \label{explicit_expressions:item2}
it holds for all $n \in \N_0$ that
\begin{equation}
\label{explicit_expressions:conclusion1}
\Theta_n = \left[ \prod_{l = 1}^n (1-\tfrac{\gamma\alpha}{l^\nu}) \right] \xi + \sum_{k = 1}^n 
\left(
\tfrac{\gamma\alpha}{k^\nu}\left[ \prod_{l = k+1}^n (1-\tfrac{\gamma\alpha}{l^\nu})\right] X_k
\right),
\end{equation}

\item\label{explicit_expressions:item3}
it holds for all $n \in \N$ that
\begin{equation}
\begin{split}
&\Exp{\norm{\Theta_n - \EXp{X_1}}^2} \\
&= 
(1-\tfrac{\gamma\alpha}{n^\nu})^2 \, \Exp{\norm{\Theta_{n-1} - \EXp{X_1}}^2} + ( \tfrac{\gamma\alpha}{n^\nu})^2 \, \Exp{\norm{X_1 - \EXp{X_1}}^2} ,
\end{split}
\end{equation}

and
\item\label{explicit_expressions:item4}
it holds for all $n \in \N_0$ that
\begin{multline}
\infty > \; \Exp{\norm{\Theta_n - \EXp{X_1}}^2} 
=
\left[ \prod_{l = 1}^n (1-\tfrac{\gamma\alpha}{l^\nu})\right]^2 \! \norm{\xi-\EXp{X_1}}^2 \\
 +\Exp{\norm{X_1 - \EXp{X_1}}^2}
\left[
\sum_{k = 1}^n 
\left[
\tfrac{\gamma\alpha}{k^\nu}\left( \prod_{l = k+1}^n (1-\tfrac{\gamma\alpha}{l^\nu})\right)
\right]^2
\right].
\end{multline}

\end{enumerate}
\end{prop}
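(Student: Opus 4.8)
The plan is to prove the four items sequentially, each building on its predecessors and on the two preparatory lemmas. For item~\eqref{explicit_expressions:item1}, I would substitute the gradient formula $(\nabla_\theta F)(\theta,x) = \alpha(\theta - x)$ from Lemma~\ref{properties_loss}\eqref{properties_loss:item3} into the defining recursion~\eqref{setting:eq2} and rearrange; this is the one-line identity $\Theta_n = \Theta_{n-1} - \tfrac{\gamma\alpha}{n^\nu}(\Theta_{n-1} - X_n) = (1 - \tfrac{\gamma\alpha}{n^\nu})\Theta_{n-1} + \tfrac{\gamma\alpha}{n^\nu}X_n$. For item~\eqref{explicit_expressions:item2}, I would recognize the recursion just obtained as an affine recursion of exactly the form covered by Lemma~\ref{recursive_eq}, applying it with $e_n = \Theta_n$, $\alpha_n = 1 - \tfrac{\gamma\alpha}{n^\nu}$, $\beta_n = \tfrac{\gamma\alpha}{n^\nu}X_n$, and $e_0 = \xi$; the stated closed form is then immediate.

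Item~\eqref{explicit_expressions:item3} is the crux. Using item~\eqref{explicit_expressions:item1} I would first write $\Theta_n - \mathbb{E}[X_1] = (1 - \tfrac{\gamma\alpha}{n^\nu})(\Theta_{n-1} - \mathbb{E}[X_1]) + \tfrac{\gamma\alpha}{n^\nu}(X_n - \mathbb{E}[X_1])$ and expand the squared norm, which produces the two diagonal terms together with a cross term proportional to $\langle \Theta_{n-1} - \mathbb{E}[X_1], X_n - \mathbb{E}[X_1]\rangle$. The main obstacle is to justify that this cross term has vanishing expectation: by item~\eqref{explicit_expressions:item2} the random variable $\Theta_{n-1}$ is a measurable function of $X_1, \ldots, X_{n-1}$ and is therefore independent of $X_n$; hence the expected inner product factorizes as $\langle \mathbb{E}[\Theta_{n-1} - \mathbb{E}[X_1]], \mathbb{E}[X_n - \mathbb{E}[X_1]]\rangle$, and the second factor is $0$ because $X_n$ and $X_1$ are identically distributed. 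The integrability needed to split the expectation and to carry out these manipulations follows from the hypothesis $\mathbb{E}[\norm{X_1}^2] < \infty$ together with the explicit formula in item~\eqref{explicit_expressions:item2}, which exhibits $\Theta_n$ as a finite linear combination of square-integrable random variables. Replacing $X_n$ by the identically distributed $X_1$ in the surviving variance term then yields the claimed recursion.

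Finally, for item~\eqref{explicit_expressions:item4} I would apply Lemma~\ref{recursive_eq} a second time, now in the scalar case, to the real-valued recursion of item~\eqref{explicit_expressions:item3}, taking $e_n = \mathbb{E}[\norm{\Theta_n - \mathbb{E}[X_1]}^2]$, $\alpha_n = (1 - \tfrac{\gamma\alpha}{n^\nu})^2$, $\beta_n = (\tfrac{\gamma\alpha}{n^\nu})^2\,\mathbb{E}[\norm{X_1 - \mathbb{E}[X_1]}^2]$, and $e_0 = \norm{\xi - \mathbb{E}[X_1]}^2$ (using $\Theta_0 = \xi$). Pulling the constant $\mathbb{E}[\norm{X_1 - \mathbb{E}[X_1]}^2]$ out of the resulting sum and using $\prod_l (1 - \tfrac{\gamma\alpha}{l^\nu})^2 = [\prod_l (1 - \tfrac{\gamma\alpha}{l^\nu})]^2$ recovers the displayed expression. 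The finiteness assertion is then clear, since all products and sums are finite and $\mathbb{E}[\norm{X_1 - \mathbb{E}[X_1]}^2] \leq \mathbb{E}[\norm{X_1}^2] < \infty$.
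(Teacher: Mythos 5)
Your proposal is correct and follows essentially the same route as the paper's own proof: the gradient substitution from Lemma~\ref{properties_loss} for item~\eqref{explicit_expressions:item1}, two applications of Lemma~\ref{recursive_eq} (vectorial for item~\eqref{explicit_expressions:item2}, scalar for item~\eqref{explicit_expressions:item4}), and for item~\eqref{explicit_expressions:item3} the same expansion of the squared norm with the cross term eliminated via independence of $\Theta_{n-1}$ and $X_n$ and the identical distribution of the $X_k$. The integrability justification you give (via the explicit formula of item~\eqref{explicit_expressions:item2} and $\Exp{\norm{X_1}^2} < \infty$) is exactly the one the paper uses to license splitting the expectations.
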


\begin{proof}[Proof of Proposition~\ref{explicit_expressions}]
First of all, observe that item~\eqref{properties_loss:item3} in Lemma~\ref{properties_loss} assures that for all $n \in \N$ it holds that
\begin{equation}
\begin{split}
\Theta_n &= \Theta_{n-1} - \tfrac{\gamma}{n^\nu} (\nabla_\theta F) (\Theta_{n-1},X_n) \\
&= \Theta_{n-1} - \tfrac{\gamma \alpha}{n^\nu}(\Theta_{n-1}- X_n) \\
&= (1- \tfrac{\gamma \alpha}{n^\nu}) \Theta_{n-1} + \tfrac{\gamma \alpha}{n^\nu}X_n.
\end{split}
\end{equation}
This establishes item~\eqref{explicit_expressions:item1}.
Next note that Lemma~\ref{recursive_eq} (with 
$d = d$,
$(\alpha_n)_{n \in \N} = (1- \tfrac{\gamma \alpha}{n^\nu})_{n \in \N} $,
$(\beta_n)_{n \in \N} = (\tfrac{\gamma \alpha}{n^\nu}X_n(\omega))_{n \in \N}$,
$(e_n)_{n \in \N_0} = (\Theta_n(\omega))_{n \in \N_0}$
for $\omega \in \Omega$ in the notation of Lemma~\ref{recursive_eq}) 
and item~\eqref{explicit_expressions:item1} demonstrate that for all $n \in \N_0$, $\omega \in \Omega$ it holds that
\begin{equation}
\begin{split}
\Theta_n(\omega) 
&=
\left[ \prod_{l = 1}^n (1-\tfrac{\gamma\alpha}{l^\nu}) \right] \Theta_0(\omega) + \sum_{k = 1}^n 
\left(
\left[ \prod_{l = k+1}^n (1-\tfrac{\gamma\alpha}{l^\nu})\right] \left(\tfrac{\gamma\alpha}{k^\nu} X_k(\omega) \right)
\right) \\
&= 
\left[ \prod_{l = 1}^n (1-\tfrac{\gamma\alpha}{l^\nu}) \right] \xi + \sum_{k = 1}^n 
\left(
\tfrac{\gamma\alpha}{k^\nu}\left[ \prod_{l = k+1}^n (1-\tfrac{\gamma\alpha}{l^\nu})\right] X_k(\omega)
\right).
\end{split}
\end{equation}
This proves item~\eqref{explicit_expressions:item2}.  
Furthermore, note that item~\eqref{explicit_expressions:item2} and the fact that 
$ \forall \, k \in \N \colon \EXp{\norm{X_k}^2} = \EXp{\norm{X_1}^2} < \infty$
assure that for all $n \in \N_0$ it holds that $\EXp{\norm{\Theta_n}^2} < \infty$.
This ensures that for all $n \in \N_0$ it holds that
\begin{equation}
\label{explicit_expressions:eq1}
\EXp{\norm{\Theta_n - \EXp{X_1}}^2} < \infty.
\end{equation}
The fact that 
$ \forall \, k \in \N \colon \EXp{\norm{X_k}^2} = \EXp{\norm{X_1}^2} < \infty$
 and item~\eqref{explicit_expressions:item1} therefore imply that for all $n \in \N$ it holds that
\begin{equation}
\label{explicit_expressions:eq2}
\begin{split}
&\Exp{\norm{\Theta_n-\EXp{X_1}}^2} \\
&= 
\Exp{\norm{(1- \tfrac{\gamma \alpha}{n^\nu}) \Theta_{n-1} + \tfrac{\gamma \alpha}{n^\nu}X_n-\EXp{X_1}}^2} \\
&=
\Exp{\norm{(1- \tfrac{\gamma \alpha}{n^\nu}) (\Theta_{n-1} - \EXp{X_1})  + \tfrac{\gamma \alpha}{n^\nu}(X_n-\EXp{X_1})}^2} \\
&=
\EXPP{\norm{(1- \tfrac{\gamma \alpha}{n^\nu}) (\Theta_{n-1} - \EXp{X_1})}^2  \\
&\quad +
2 \, \lll (1- \tfrac{\gamma \alpha}{n^\nu}) (\Theta_{n-1} - \EXp{X_1}), \tfrac{\gamma \alpha}{n^\nu}(X_n-\EXp{X_1})\rrr + 
 \norm{\tfrac{\gamma \alpha}{n^\nu}(X_n-\EXp{X_1})}^2} \\
&=
 (1- \tfrac{\gamma \alpha}{n^\nu})^2 \,\EXP{\norm{ \Theta_{n-1} - \EXp{X_1}}^2}  +  (\tfrac{\gamma \alpha}{n^\nu} )^2 \,\EXP{\norm{X_n-\EXp{X_1}}^2}\\
&\quad +
2 (1- \tfrac{\gamma \alpha}{n^\nu}) (\tfrac{\gamma \alpha}{n^\nu} ) \EXP{\lll \Theta_{n-1} - \EXp{X_1}, X_n-\EXp{X_1}\rrr}.
\end{split}
\end{equation}
In addition, note that the fact that for all independent random variables $Y,Z \colon \Omega \to \R$ with $\EXp{ \abs{Y} + \abs{Z}} < \infty$ it holds that 
$\EXp{\abs{YZ}} < \infty$ and $\EXp{YZ} = \EXp{Y} \, \EXp{Z}$ (cf., e.g., Klenke \cite[Theorem 5.4]{Klenke14}), the fact that for all $n \in \N$ it holds that $\Theta_{n-1}$ and $X_n$ are independent, and the fact that for all $n \in \N$ it holds that $\Exp{ \norm{\Theta_{n-1}} + \norm{X_n}} < \infty$ assure that for all $n \in \N$ it holds that
\begin{equation}
\begin{split}
\EXP{\lll \Theta_{n-1} - \EXp{X_1}, X_n-\EXp{X_1}\rrr} 
&= 
\big< \EXP{\Theta_{n-1} - \EXp{X_1}}, \EXP{X_1-\EXp{X_1}} \!\big\rangle \\
&=
\llll \EXp{\Theta_{n-1}} - \EXp{X_1}, \EXp{X_1}-\EXp{X_1}   \rrrr 
= 0.
\end{split}
\end{equation}
This, \eqref{explicit_expressions:eq2}, and the fact that $(X_n)_{n \in \N}$ are i.i.d\ random variables demonstrate that for all $n \in \N$ it holds that
\begin{equation}
\begin{split}
&\Exp{\norm{\Theta_n-\EXp{X_1}}^2}\\
&=
(1- \tfrac{\gamma \alpha}{n^\nu})^2 \,\EXP{\norm{\Theta_{n-1} - \EXp{X_1}}^2 } 
+ (\tfrac{\gamma \alpha}{n^\nu} )^2 \,\Exp{\norm{X_n-\EXp{X_1}}^2} \\
&=
(1- \tfrac{\gamma \alpha}{n^\nu})^2 \,\EXP{\norm{\Theta_{n-1} - \EXp{X_1}}^2 } 
+ (\tfrac{\gamma \alpha}{n^\nu} )^2 \,\Exp{\norm{X_1-\EXp{X_1}}^2}.
\end{split}
\end{equation}
This proves item~\eqref{explicit_expressions:item3}.
Combining Lemma~\ref{recursive_eq} (with 
$d = 1$, 
$(\alpha_n)_{n \in \N} = ((1- \tfrac{\gamma \alpha}{n^\nu})^2)_{n \in \N}$, 
$(\beta_n)_{n \in \N} =  ((\tfrac{\gamma \alpha}{n^\nu} )^2 \,\Exp{\norm{X_1-\EXp{X_1}}^2})_{n \in \N}$, 
$(e_n)_{n \in \N_0} = (\Exp{\norm{\Theta_n-\EXp{X_1}}^2})_{n \in \N_0}$ 
in the notation of Lemma \ref{recursive_eq}) 
with item~\eqref{explicit_expressions:item3} and \eqref{explicit_expressions:eq1} demonstrates that for all $n \in \N_0$ it holds that
\begin{equation}
\begin{split}
\infty &>\, \Exp{\norm{\Theta_n-\EXp{X_1}}^2} \\
&= 
\left[\prod_{l = 1}^{n}(1- \tfrac{\gamma \alpha}{l^\nu})^2 \right]\Exp{\norm{\Theta_0-\EXp{X_1}}^2} \\
&\quad + \sum_{k = 1}^{n}\left[\left(\prod_{l = k+1}^{n}(1- \tfrac{\gamma \alpha}{l^\nu})^2\right) (\tfrac{\gamma \alpha}{k^\nu} )^2 \,\Exp{\norm{X_1-\EXp{X_1}}^2}\right]\\
&=
\left[ \prod_{l = 1}^n (1-\tfrac{\gamma\alpha}{l^\nu})\right]^2 \! \norm{\xi-\EXp{X_1}}^2 \\
&\quad + 
\Exp{\norm{X_1 - \EXp{X_1}}^2}
\left[
\sum_{k = 1}^n 
\left[
\tfrac{\gamma\alpha}{k^\nu}\left( \prod_{l = k+1}^n (1-\tfrac{\gamma\alpha}{l^\nu})\right)
\right]^2
\right].
\end{split}
\end{equation}
This establishes item~\eqref{explicit_expressions:item4}.
The proof of Proposition~\ref{explicit_expressions} it thus completed.
\end{proof}


\section{Upper error estimates for the SGD optimization method}
\label{sect:UB}
In this section we establish in Proposition~\ref{UB_nuleq1} and Corollary~\ref{UB_nueq1_3} below upper bounds for the root mean square distance between the SGD process in \eqref{setting:eq2} and the global minimum of the considered optimization problem (cf.\ item~\eqref{properties_loss:item2} in Lemma~\ref{properties_loss}).
In our analysis we distinguish between the case of slowly decaying learning rates (see Subsection~\ref{subsect:UB_slow} below), the case of fast decaying learning rates (see Subsection~\ref{subsect:UB_fast} and Subsection~\ref{subsect:UB_fast_refined} below), and the case of very fast decaying learning rates (see Subsection~\ref{subsect:UB_very_fast}  below).

\subsection{Upper errors estimates in the case of slowly decaying learning rates}
\label{subsect:UB_slow}
In this subsection we establish in Proposition~\ref{UB_nuleq1} below an upper bound for the root mean square error of the SGD process in \eqref{setting:eq2} in the case of slowly decaying learning rates (corresponding to the case $\nu< 1$ in Setting~\ref{setting}).
In our proof of Proposition~\ref{UB_nuleq1} we employ the auxiliary and elementary results in Lemma~\ref{recursive_ineq} and Lemma~\ref{aux_UB_nuleq1} below.
A result similar to Lemma~\ref{recursive_ineq} can, e.g., be found in \cite[Corollary 2.18]{mypaper} and a result similar to Lemma~\ref{aux_UB_nuleq1} can, e.g., be found in \cite[Lemma 4.1]{mypaper}.

\subsubsection{On a recursive inequality and an a priori estimate}
\label{subsubsect:recursive_ineq}


\begin{lemma}
\label{recursive_ineq}
Let $\kappa \in [0,\infty)$, $(e_n)_{n \in \N_0} \subseteq [0,\infty)$, $(\gamma_n)_{n \in \N} \subseteq (0,\infty)$ satisfy for all $n \in \N$ that
\begin{equation}
\label{recursive_ineq:assumption1}
e_n \leq (1-\gamma_n)^2 e_{n-1} +  \kappa (\gamma_n)^2\qquad \qand
\end{equation}
\begin{equation}
\label{recursive_ineq:assumption2}
 \liminf_{l \to \infty}\left[\frac{\gamma_l - \gamma_{l-1}}{(\gamma_l)^{2}} + \frac{2\gamma_{l-1}}{\gamma_{l}} - \gamma_{l-1} \right] > 0.
\end{equation}
Then there exists $C \in (0,\infty)$ such that for all $n \in \N$ it holds that
\begin{equation}
\label{recursive_ineq:conclusion}
e_n \leq C\gamma_n.
\end{equation}
\end{lemma}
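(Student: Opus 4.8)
The plan is to prove the claim by induction on $n \in \N$, where the crucial observation is that the bracketed expression in the liminf hypothesis \eqref{recursive_ineq:assumption2} is precisely the quantity that controls whether the induction step closes. First I would expand $(1-\gamma_n)^2 = 1 - 2\gamma_n + (\gamma_n)^2$ and compute, for every $n \in \N$ with $n \geq 2$,
\begin{equation}
\frac{\gamma_n - (1-\gamma_n)^2 \gamma_{n-1}}{(\gamma_n)^2} = \frac{\gamma_n - \gamma_{n-1}}{(\gamma_n)^2} + \frac{2\gamma_{n-1}}{\gamma_n} - \gamma_{n-1},
\end{equation}
which is exactly the expression appearing in \eqref{recursive_ineq:assumption2}. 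Denoting by $\delta \in (0,\infty)$ the value of the liminf in \eqref{recursive_ineq:assumption2}, there would then exist $N \in \N$ with $N \geq 2$ such that for all $n \geq N$ it holds that $\gamma_n - (1-\gamma_n)^2 \gamma_{n-1} \geq \tfrac{\delta}{2}(\gamma_n)^2 > 0$.

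Next I would set $C = \max\{\tfrac{2\kappa}{\delta} + 1, \max_{1 \leq n \leq N} \tfrac{e_n}{\gamma_n}\} \in (0,\infty)$ and perform the induction step. Assuming $e_{n-1} \leq C\gamma_{n-1}$ for some $n > N$ (so that $n-1 \geq N$), the recursive inequality \eqref{recursive_ineq:assumption1} together with $(1-\gamma_n)^2 \geq 0$ and $e_{n-1} \geq 0$ would give
\begin{equation}
e_n \leq (1-\gamma_n)^2 C\gamma_{n-1} + \kappa (\gamma_n)^2.
\end{equation}
The desired bound $e_n \leq C\gamma_n$ is then equivalent to $\kappa(\gamma_n)^2 \leq C[\gamma_n - (1-\gamma_n)^2\gamma_{n-1}]$, and using the lower bound $\gamma_n - (1-\gamma_n)^2\gamma_{n-1} \geq \tfrac{\delta}{2}(\gamma_n)^2$ from the first step together with $C \geq \tfrac{2\kappa}{\delta}$ yields $C[\gamma_n - (1-\gamma_n)^2\gamma_{n-1}] \geq \kappa(\gamma_n)^2$, which closes the induction.

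The finitely many base cases $1 \leq n \leq N$ hold by the very definition of $C$ (since there $C \geq \tfrac{e_n}{\gamma_n}$), so combining them with the induction step would establish $e_n \leq C\gamma_n$ for all $n \in \N$. The main obstacle, and really the heart of the argument, is the algebraic identity in the first step: one must recognize that the hypothesis \eqref{recursive_ineq:assumption2} has been engineered so that the denominator $\gamma_n - (1-\gamma_n)^2\gamma_{n-1}$, which arises when one demands $(1-\gamma_n)^2 C\gamma_{n-1} + \kappa(\gamma_n)^2 \leq C\gamma_n$ and divides through by $(\gamma_n)^2$, is exactly the liminf expression. Once this is seen, the strict positivity of the liminf simultaneously supplies both the eventual positivity of this denominator and the uniform lower bound $\tfrac{\delta}{2}(\gamma_n)^2$, which is precisely what permits choosing $C$ independently of $n$.
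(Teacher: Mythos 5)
Your proposal is correct and follows essentially the same route as the paper's own proof: both arguments rest on the observation that $\frac{\gamma_n-(1-\gamma_n)^2\gamma_{n-1}}{(\gamma_n)^2}$ equals the expression inside the liminf in \eqref{recursive_ineq:assumption2}, use the hypothesis to bound this quantity below by a positive constant on a tail $n\geq N$, choose $C$ as a maximum of a constant of the form $\kappa$ divided by that lower bound and the finitely many ratios $\nicefrac{e_n}{\gamma_n}$, and close the estimate by induction. The only cosmetic caveat is that the liminf in \eqref{recursive_ineq:assumption2} could be $+\infty$, so you should take $\delta$ to be any finite positive number below the liminf rather than literally ``the value of the liminf''; this does not affect the structure of your argument.
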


\begin{proof}[Proof of Lemma~\ref{recursive_ineq}]
Throughout this proof let $m \in \N\cap (1,\infty)$, $\mathcal{I} \in (0,\infty)$ satisfy that
\begin{equation}
\label{recursive_ineq:eq1}
\mathcal{I} =  \inf_{l \in \N \cap (m,\infty)}\left[\frac{\gamma_l - \gamma_{l-1}}{(\gamma_l)^{2}} + \frac{2\gamma_{l-1}}{\gamma_{l}} - \gamma_{l-1} \right] > 0
\end{equation}
(cf.\ \eqref{recursive_ineq:assumption2}) and let $C \in [0,\infty)$ be given by 
\begin{equation}
\label{recursive_ineq:eq2}
C = \max \! \left\{ \frac{e_m}{\gamma_m}, \frac{\kappa}{\mathcal{I}}  \right\}.
\end{equation}
We claim that for all $n \in \{m,m+1,\ldots \}$ it holds that
\begin{equation}
\label{recursive_ineq:eq3}
e_n \leq C \gamma_n.
\end{equation}
We now prove \eqref{recursive_ineq:eq3} by induction on $n \in \{m,m+1,\ldots \}$.
For the base case $n = m$ note that
\begin{equation}
e_m = \left[\frac{e_m}{\gamma_m} \right] \gamma_m \leq C \gamma_m.
\end{equation}
This establishes \eqref{recursive_ineq:eq3} in the base case $n = m$.
For the induction step $ \{m,m+1,\ldots \} \ni (n-1) \to n \in  \N \cap (m,\infty)$ note that \eqref{recursive_ineq:assumption1} assures that for all $n \in  \N \cap (m,\infty)$ with $e_{n-1} \leq C \gamma_{n-1}$ it holds that
\begin{equation}
\begin{split}
e_n 
&\leq
(1-\gamma_n)^2 e_{n-1} +  \kappa (\gamma_n)^2\\
&\leq
(1- 2\gamma_n + (\gamma_n)^2 ) C \gamma_{n-1} +  \kappa (\gamma_n)^2 - C \gamma_{n} + C \gamma_{n} \\
&=
C \! \left( \gamma_{n-1} - 2\gamma_n\gamma_{n-1}  + (\gamma_n)^2 \gamma_{n-1} - \gamma_{n}\right) +  \kappa (\gamma_n)^2 + C \gamma_{n} \\
&=
(\gamma_n)^2 \! \left[C \! \left( \frac{\gamma_{n-1}  - \gamma_{n}}{(\gamma_n)^2}- \frac{2\gamma_{n-1}}{\gamma_n}  +  \gamma_{n-1} \right) +  \kappa \right] + C \gamma_{n} \\	
&=
 C \gamma_{n} - (\gamma_n)^2 \! \left[C \! \left( \frac{\gamma_{n}  - \gamma_{n-1}}{(\gamma_n)^2} + \frac{2\gamma_{n-1}}{\gamma_n} -  \gamma_{n-1}\right) - \kappa \right].
\end{split}
\end{equation}
This, \eqref{recursive_ineq:eq1}, and the fact that $C \geq \frac{\kappa}{\mathcal{I}}$ demonstrate that for all $n \in \N \cap (m,\infty)$ with $e_{n-1} \leq C \gamma_{n-1}$ it holds that
\begin{equation}
\begin{split}
e_n 
&\leq
 C \gamma_{n} - (\gamma_n)^2 \left[C\mathcal{I} -  \kappa \right] \\
&\leq 
 C \gamma_{n} - (\gamma_n)^2 \left[ \tfrac{\kappa}{\mathcal{I}} \, \mathcal{I} -  \kappa \right] \\
 &=
C \gamma_{n} - (\gamma_n)^2 \left[\kappa -  \kappa \right] \\
&=
C \gamma_{n}. 
\end{split}
\end{equation}
Induction thus establishes \eqref{recursive_ineq:eq3}.
Next observe that \eqref{recursive_ineq:eq3} implies that for all $n \in \N$ it holds that
\begin{equation}
\begin{split}
e_n 
&\leq
\left[\max \! \left\{ \frac{e_1}{\gamma_1}, \frac{e_2}{\gamma_2},\ldots, \frac{e_{m-1}}{\gamma_{m-1}}, C\right\} \right] \gamma_n \\
&=
\left[\max \! \left\{ \frac{e_1}{\gamma_1}, \frac{e_2}{\gamma_2}, \ldots, \frac{e_{m}}{\gamma_{m}}, \frac{\kappa}{\mathcal{I}}\right\} \right] \gamma_n.
\end{split}
\end{equation}
This completes the proof of Lemma~\ref{recursive_ineq}.
\end{proof}

\subsubsection{On an asymptotic property of the learning rates}
\label{subsubsect:UB_small_aux}

\begin{lemma}
\label{aux_UB_nuleq1}
Let $\beta \in (0,\infty)$, $\nu \in (0,1)$, $(\gamma_n)_{n \in \N} \subseteq (0,\infty)$ satisfy for all $n \in \N$ that $\gamma_n = \beta n^{-\nu}$.
Then 
\begin{equation}
\liminf_{l \to \infty}\left[\frac{\gamma_l - \gamma_{l-1}}{(\gamma_l)^{2}} + \frac{2\gamma_{l-1}}{\gamma_{l}} - \gamma_{l-1} \right] \geq 2 > 0.
\end{equation}
\end{lemma}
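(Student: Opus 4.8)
The plan is to substitute $\gamma_n = \beta n^{-\nu}$ into the bracketed expression and to show that each of the three summands converges as $l \to \infty$; since the $\liminf$ of a convergent sequence equals its limit, it will then suffice to add the three limits. Writing the expression out (for $l \geq 2$, so that all indices are defined; the $\liminf$ is unaffected by finitely many terms), the second summand becomes $\frac{2\gamma_{l-1}}{\gamma_l} = 2 \big( \frac{l}{l-1} \big)^{\nu}$, which converges to $2$, and the third summand becomes $-\gamma_{l-1} = -\beta (l-1)^{-\nu}$, which converges to $0$ because $\nu > 0$. The content of the lemma therefore reduces to showing that the first summand $\frac{\gamma_l - \gamma_{l-1}}{(\gamma_l)^2}$ converges to $0$.

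This first summand is the main obstacle, because the denominator $(\gamma_l)^2 = \beta^2 l^{-2\nu}$ decays and hence its reciprocal $\beta^{-2} l^{2\nu}$ grows, so one must verify that this growth is beaten by the decay of the numerator $\gamma_l - \gamma_{l-1}$. I would rewrite the summand as $\frac{l^{\nu}}{\beta} \big( 1 - ( \frac{l}{l-1} )^{\nu} \big)$ and then control $( \frac{l}{l-1} )^{\nu} - 1 = ( 1 + \frac{1}{l-1} )^{\nu} - 1$ using the elementary concavity inequality $(1+x)^{\nu} \leq 1 + \nu x$, valid for all $x \in [0,\infty)$ and $\nu \in (0,1]$ (since $t \mapsto t^{\nu}$ is concave and lies below its tangent at the point $1$). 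This yields the two-sided bound $0 \leq ( 1 + \frac{1}{l-1} )^{\nu} - 1 \leq \frac{\nu}{l-1}$, so that the absolute value of the first summand is at most $\frac{\nu}{\beta} \cdot \frac{l^{\nu}}{l-1}$.

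Finally, I would observe that $\frac{l^{\nu}}{l-1} = l^{\nu - 1} \cdot \frac{l}{l-1} \to 0$ as $l \to \infty$, which is exactly where the hypothesis $\nu < 1$ enters: the exponent $\nu - 1$ is strictly negative, so $l^{\nu-1} \to 0$ while $\frac{l}{l-1} \to 1$. Hence the first summand converges to $0$, and combining the three limits shows that the full bracketed expression converges to $0 + 2 + 0 = 2$. Therefore its $\liminf$ equals $2$, which in particular satisfies $2 \geq 2 > 0$, completing the argument. The only genuinely nontrivial ingredient is the concavity bound used to tame the first summand; everything else is a direct limit computation.
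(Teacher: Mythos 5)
Your proof is correct and follows essentially the same route as the paper: both treat the three summands separately, showing that the first is asymptotically negligible (this is where $\nu<1$ enters), while the second tends to $2$ and the third to $0$. The only cosmetic difference is that you control the increment via the concavity inequality $(1+x)^{\nu}\leq 1+\nu x$ and then add the three limits, whereas the paper bounds $\gamma_l-\gamma_{l-1}$ through the integral representation $\beta\int_{l-1}^{l}(-\nu)x^{-\nu-1}\,\mathrm{d}x \geq -\beta\nu(l-1)^{-1-\nu}$ and combines the terms by superadditivity of the limit inferior rather than by convergence of each summand.
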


\begin{proof}[Proof of Lemma~\ref{aux_UB_nuleq1}]
First, note that for all $l \in \{2,3,\ldots \}$ it holds that
\begin{equation}
\begin{split}
\gamma_l - \gamma_{l-1} 
&= 
\beta \, (l^{-\nu}- (l-1)^{-\nu}) 
= 
\beta \left[x^{-\nu} \right]_{x = l-1}^{x = l}
=
\beta \left[ \int_{l-1}^l (-\nu)x^{-\nu-1} \, \mathrm{d} x \right] \\
&=
-\beta \nu \underbrace{\left[ \int_{l-1}^l \frac{1}{x^{1+\nu}} \, \mathrm{d}x \right]}_{\leq \frac{1}{(l-1)^{1+\nu}}}
\geq
\frac{-\beta \nu}{(l-1)^{1+\nu}}.
\end{split}
\end{equation}
The fact that $ 1+\nu-2\nu = 1-\nu > 0$ hence demonstrates that
\begin{equation}
\begin{split}
\liminf_{l \to \infty} \left[\frac{\gamma_l - \gamma_{l-1}}{(\gamma_l)^{2}}\right]
&\geq
\liminf_{l \to \infty} \left[\frac{\big(\tfrac{-\beta \nu}{(l-1)^{1+\nu}}\big)}{\big(\tfrac{\beta}{l^\nu}\big)^{2}}\right]
=
-\tfrac{\nu}{\beta} \limsup_{l \to \infty} \left[\tfrac{l^{2\nu}}{(l-1)^{1+\nu}}\right] \\
&=
-\tfrac{\nu}{\beta} \limsup_{l \to \infty} \left[\tfrac{1}{(l-1)^{1+\nu-2\nu}}\left(\tfrac{l}{l-1}\right)^{2\nu} \right]
= 
0.
\end{split}
\end{equation}
Therefore, we obtain that
\begin{equation}
\begin{split}
&\liminf_{l \to \infty}\left[\frac{\gamma_l - \gamma_{l-1}}{(\gamma_l)^{2}} + \frac{2\gamma_{l-1}}{\gamma_{l}} - \gamma_{l-1} \right]  \\
&\geq
\liminf_{l \to \infty}\left[\frac{\gamma_l - \gamma_{l-1}}{(\gamma_l)^{2}}\right]
 + 2 \liminf_{l \to \infty} \left[ \frac{\gamma_{l-1}}{\gamma_{l}} \right]
  -  \limsup_{l \to \infty} \gamma_{l-1} \\
&\geq 
2 \liminf_{l \to \infty} \left[\frac{\big(\tfrac{\beta}{(l-1)^\nu}\big)}{\big(\tfrac{\beta}{l^\nu}\big)}\right] -  \limsup_{l \to \infty} \left[\tfrac{\beta}{(l-1)^\nu}\right]  \\
&=
2 \liminf_{l \to \infty} \Big[\! \left(\tfrac{l}{l-1} \right)^\nu \! \Big]
= 
2
.
\end{split}
\end{equation}
The proof of Lemma~\ref{aux_UB_nuleq1} is thus completed.
\end{proof}

\subsubsection{Upper error estimates}
\label{subsubsect:UB}

\begin{prop}
\label{UB_nuleq1}
Assume Setting~\ref{setting} and assume that $\nu < 1$.
Then there exists $C \in (0,\infty)$ such that for all $n \in \N$ it holds that
\begin{equation}
\big(\EXP{\norm{\Theta_n-\EXp{X_1}}^2}\big)^{\nicefrac{1}{2}} 
\leq 
 C n^{-\nicefrac{\nu}{2}}.
\end{equation}
\end{prop}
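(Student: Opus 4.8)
The plan is to reduce the claim to the abstract recursive estimate of Lemma~\ref{recursive_ineq}, feeding it the exact mean square recursion already derived in Proposition~\ref{explicit_expressions} together with the asymptotic property checked in Lemma~\ref{aux_UB_nuleq1}.

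First I would fix the abbreviations $e_n = \Exp{\norm{\Theta_n - \EXp{X_1}}^2}$ for $n \in \N_0$, $\gamma_n = \tfrac{\gamma\alpha}{n^\nu}$ for $n \in \N$, and $\kappa = \Exp{\norm{X_1 - \EXp{X_1}}^2}$, and observe that $(e_n)_{n \in \N_0} \subseteq [0,\infty)$, $(\gamma_n)_{n\in\N}\subseteq(0,\infty)$ (since $\alpha,\gamma \in (0,\infty)$), and $\kappa \in [0,\infty)$. Item~\eqref{explicit_expressions:item3} of Proposition~\ref{explicit_expressions} then reads precisely $e_n = (1-\gamma_n)^2 e_{n-1} + \kappa(\gamma_n)^2$ for all $n \in \N$, which in particular yields the inequality hypothesis~\eqref{recursive_ineq:assumption1}.

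Next I would verify the $\liminf$ hypothesis~\eqref{recursive_ineq:assumption2}. Since $\gamma_n = (\gamma\alpha) n^{-\nu}$ with $\gamma\alpha \in (0,\infty)$ and $\nu \in (0,1)$ by assumption, Lemma~\ref{aux_UB_nuleq1} (applied with $\beta = \gamma\alpha$) gives immediately that the relevant $\liminf$ is $\geq 2 > 0$. With both hypotheses confirmed, Lemma~\ref{recursive_ineq} produces a constant $\widetilde{C} \in (0,\infty)$ with $e_n \leq \widetilde{C}\gamma_n = \widetilde{C}\gamma\alpha\, n^{-\nu}$ for all $n \in \N$; taking square roots and setting $C = (\widetilde{C}\gamma\alpha)^{\nicefrac{1}{2}}$ then delivers the assertion.

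There is no real obstacle here beyond careful bookkeeping: the work has been front-loaded into the two auxiliary lemmas, so the only things to get right are the identification $\beta = \gamma\alpha$, the nonnegativity and positivity of $e_n$, $\gamma_n$, $\kappa$, and the observation that the exact recursion trivially implies the required inequality. The one conceptual point worth flagging is that the sharp exponent $\nicefrac{\nu}{2}$ arises because Lemma~\ref{recursive_ineq} shows the mean square error decays at exactly the rate of the learning rate $\gamma_n \sim n^{-\nu}$, and taking the square root halves that exponent.
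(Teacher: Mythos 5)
Your proposal is correct and follows essentially the same route as the paper's own proof: the same identifications $e_n = \Exp{\norm{\Theta_n - \EXp{X_1}}^2}$, $\gamma_n = \tfrac{\gamma\alpha}{n^\nu}$, $\kappa = \Exp{\norm{X_1-\EXp{X_1}}^2}$, verification of the $\liminf$ hypothesis via Lemma~\ref{aux_UB_nuleq1} with $\beta = \gamma\alpha$, and application of Lemma~\ref{recursive_ineq} followed by taking square roots. The only tiny bookkeeping point is that your assertion $(e_n)_{n\in\N_0}\subseteq[0,\infty)$ (i.e., finiteness of the mean square errors) should be justified by also citing item~\eqref{explicit_expressions:item4} of Proposition~\ref{explicit_expressions}, exactly as the paper does.
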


\begin{proof}[Proof of Proposition~\ref{UB_nuleq1}]
Note that items~\eqref{explicit_expressions:item3}--\eqref{explicit_expressions:item4} in Proposition~\ref{explicit_expressions} assure that for all $n \in \N$ it holds that
\begin{equation}
\label{UB_nuleq1:eq1}
\begin{split}
&\Exp{\norm{\Theta_n - \EXp{X_1}}^2} \\
&=
(1-\tfrac{\gamma\alpha}{n^\nu})^2 \, \Exp{\norm{\Theta_{n-1} - \EXp{X_1}}^2} + ( \tfrac{\gamma\alpha}{n^\nu})^2 \,\Exp{\norm{X_1 - \EXp{X_1}}^2} < \infty.
\end{split}
\end{equation}
Moreover, observe that Lemma~\ref{aux_UB_nuleq1} (with 
$\beta = \gamma \alpha$, $\nu = \nu$
in the notation of Lemma~\ref{aux_UB_nuleq1}) ensures that
\begin{equation}
\liminf_{l \to \infty}\left[\frac{\big(\tfrac{\gamma\alpha}{l^\nu}\big) - \big(\tfrac{\gamma\alpha}{(l-1)^\nu}\big)}{\big(\tfrac{\gamma\alpha}{l^\nu}\big)^{2}} + \frac{2\big(\tfrac{\gamma\alpha}{(l-1)^\nu}\big)}{\left(\tfrac{\gamma\alpha}{l^\nu}\right)} - \left(\tfrac{\gamma\alpha}{(l-1)^\nu}\right) \right] > 0.
\end{equation}
Combining this, (\ref{UB_nuleq1:eq1}) and Lemma~\ref{recursive_ineq} (with 
$\kappa = \Exp{\norm{X_1 - \EXp{X_1}}^2}$, 
$(e_n)_{n \in \N_0} = (\Exp{\norm{\Theta_n - \EXp{X_1}}^2})_{n \in \N_0}$,
$(\gamma_n)_{n \in \N} = (\tfrac{\gamma\alpha}{n^\nu})_{n \in \N}$ 
in the notation of Lemma~\ref{recursive_ineq}) establishes that there exists $C \in (0,\infty)$ such that for all $n \in \N$ it holds that
\begin{equation}
\Exp{\norm{\Theta_n - \EXp{X_1}}^2} \leq C (\tfrac{\gamma\alpha}{n^\nu}) = \left[C\gamma\alpha  \right] {n^{-\nu}}.
\end{equation}
Therefore, we obtain for all $n \in \N$ that
\begin{equation}
\left(\Exp{\norm{\Theta_n - \EXp{X_1}}^2}\right)^{\nicefrac{1}{2}} \leq \left[C\gamma\alpha  \right]^{\nicefrac{1}{2}} n^{-\nicefrac{\nu}{2}}.
\end{equation}
The proof of Proposition~\ref{UB_nuleq1} is thus completed.
\end{proof}

\subsection{Upper errors estimates in the case of fast decaying learning rates}
\label{subsect:UB_fast}
In this subsection we establish in Proposition~\ref{UB_nueq1_2} below an upper bound for the root mean square error of the SGD process in \eqref{setting:eq2} in the case of fast decaying learning rates (corresponding to the case $\nu= 1$ in Setting~\ref{setting}). 

\begin{prop}
\label{UB_nueq1_2}
Assume Setting~\ref{setting} and assume that $\nu = 1$. 
Then for every $\varepsilon \in (0,\infty)$ there exists $C \in (0,\infty)$ such that for all $n \in \N$ it holds that
\begin{equation}
\label{UB_nueq1_2:concl1}
\big(\EXP{\norm{\Theta_n-\EXp{X_1}}^2}\big)^{\nicefrac{1}{2}} 
\leq 
 C n^{- (\min \{\nicefrac{1}{2}, \gamma\alpha \} - \varepsilon)}.
\end{equation}
\end{prop}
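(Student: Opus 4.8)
The plan is to read off from item~\eqref{explicit_expressions:item4} in Proposition~\ref{explicit_expressions}, specialized to $\nu = 1$, the exact bias--variance decomposition
\begin{equation}
\label{plan:decomp}
\Exp{\norm{\Theta_n - \EXp{X_1}}^2} = \ABs{\mathcal{P}_{0,n}}^2 \, \norm{\xi - \EXp{X_1}}^2 + \EXP{\norm{X_1 - \EXp{X_1}}^2} \sum_{k=1}^n \big(\tfrac{\gamma\alpha}{k}\big)^2 \ABs{\mathcal{P}_{k,n}}^2 ,
\end{equation}
where for $0 \le k \le n$ I abbreviate $\mathcal{P}_{k,n} = \prod_{l=k+1}^n (1 - \tfrac{\gamma\alpha}{l})$, and then to bound the deterministic ``bias'' summand and the ``variance'' summand separately. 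The whole argument reduces to a single uniform estimate on the products $\mathcal{P}_{k,n}$, after which both summands are controlled by elementary $p$-series comparisons.

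The key step is the product estimate. Let $M \in \N$ be the smallest integer with $M \ge \gamma\alpha$, so that $1 - \tfrac{\gamma\alpha}{l} \in (0,1)$ for every integer $l > M$. For $M \le k \le n$ all factors in $\mathcal{P}_{k,n}$ are positive, and the elementary inequality $1 - x \le e^{-x}$ together with the integral comparison $\sum_{l=k+1}^n \tfrac{1}{l} \ge \log(\tfrac{n+1}{k+1})$ yields
\begin{equation}
0 < \mathcal{P}_{k,n} \le \exp\!\Big(\! -\gamma\alpha \textstyle\sum_{l=k+1}^n \tfrac{1}{l}\Big) \le \Big(\tfrac{k+1}{n+1}\Big)^{\gamma\alpha} .
\end{equation}
For the finitely many indices $0 \le k < M$ I would factor off the initial block $\prod_{l=k+1}^{M}(1 - \tfrac{\gamma\alpha}{l})$, which is a bounded constant depending only on $\gamma$ and $\alpha$, and reduce to the previous case; this produces a constant $c \in (0,\infty)$, depending only on $\gamma$ and $\alpha$, with $\abs{\mathcal{P}_{k,n}} \le c\,(k+1)^{\gamma\alpha}\, n^{-\gamma\alpha}$ for all $0 \le k \le n$.

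Feeding this uniform product bound into \eqref{plan:decomp}, I would bound the bias summand by $C_1 n^{-2\gamma\alpha}$ and, using $(k+1)^{2\gamma\alpha} \le 2^{2\gamma\alpha} k^{2\gamma\alpha}$, the variance summand by $C_2 n^{-2\gamma\alpha} \sum_{k=1}^n k^{2\gamma\alpha - 2}$. The remaining work is to estimate the $p$-series-type sum $\sum_{k=1}^n k^{2\gamma\alpha - 2}$ in the three regimes: it is bounded by a constant when $\gamma\alpha < \nicefrac{1}{2}$, by $1 + \log n$ when $\gamma\alpha = \nicefrac{1}{2}$, and by $C n^{2\gamma\alpha - 1}$ when $\gamma\alpha > \nicefrac{1}{2}$. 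Combining the two summands then gives $\Exp{\norm{\Theta_n - \EXp{X_1}}^2} \le C n^{-2\gamma\alpha}$ when $\gamma\alpha < \nicefrac{1}{2}$, $\le C n^{-1}(1 + \log n)$ when $\gamma\alpha = \nicefrac{1}{2}$, and $\le C n^{-1}$ when $\gamma\alpha > \nicefrac{1}{2}$. Taking square roots and, in the borderline case, using that $(1+\log n)^{\nicefrac{1}{2}} \le C_\varepsilon\, n^\varepsilon$ for every $\varepsilon \in (0,\infty)$, yields \eqref{UB_nueq1_2:concl1}.

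The part requiring the most care is the borderline case $\gamma\alpha = \nicefrac{1}{2}$: here the variance genuinely carries a logarithmic factor, producing the rate $n^{-\nicefrac{1}{2}}(\log n)^{\nicefrac{1}{2}}$, and it is precisely this factor that forces the $\varepsilon$-loss in the exponent of \eqref{UB_nueq1_2:concl1}; away from this case the stated rate in fact holds with $\varepsilon = 0$. A second point to treat carefully is that for $\gamma\alpha > 1$ the factors $1 - \tfrac{\gamma\alpha}{l}$ with $l \le \gamma\alpha$ are nonpositive, so the products $\mathcal{P}_{k,n}$ need neither be monotone nor sign-definite; splitting off the fixed initial block of such indices, which contributes only a bounded multiplicative constant, is what makes the clean bound $\abs{\mathcal{P}_{k,n}} \le c\,(k+1)^{\gamma\alpha}\, n^{-\gamma\alpha}$ legitimate for all $k$ and hence what allows the uniform $p$-series comparison above.
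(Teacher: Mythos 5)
Your proof is correct, but it takes a genuinely different route from the paper's. The paper proves Proposition~\ref{UB_nueq1_2} by induction on the one-step recursion of item~\eqref{explicit_expressions:item3} in Proposition~\ref{explicit_expressions}: it fixes the $\varepsilon$-shifted exponent $\beta = \min\{\nicefrac{1}{2},\gamma\alpha\}-\varepsilon$ in advance, shows that $n^2\big((1-\tfrac{\gamma\alpha}{n})^2(n-1)^{-2\beta}-n^{-2\beta}\big) \to -\infty$, and uses this to close the induction step $\E[\|\Theta_n-\E[X_1]\|^2]\le c_m n^{-2\beta}$ beyond some index $m$; no explicit products or series are ever estimated. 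You instead work with the closed-form solution of item~\eqref{explicit_expressions:item4}, bound the products $\prod_{l=k+1}^n(1-\tfrac{\gamma\alpha}{l})$ by $((k+1)/(n+1))^{\gamma\alpha}$ via $1-x\le e^{-x}$ and an integral comparison (correctly splitting off the finitely many possibly nonpositive factors when $\gamma\alpha>1$), and then run a $p$-series comparison in the three regimes $\gamma\alpha<\nicefrac{1}{2}$, $\gamma\alpha=\nicefrac{1}{2}$, $\gamma\alpha>\nicefrac{1}{2}$. This buys you strictly more than the stated proposition: you identify that the $\varepsilon$-loss is genuine only at the borderline $\gamma\alpha=\nicefrac{1}{2}$ (where a $(\log n)^{\nicefrac{1}{2}}$ factor appears), and for $\gamma\alpha>\nicefrac{1}{2}$ your argument already yields the sharp rate $n^{-\nicefrac{1}{2}}$, i.e., it subsumes Proposition~\ref{UB_nueq1_1}, which the paper must prove separately (via Lemma~\ref{recursive_ineq} and Lemma~\ref{aux_UB_nueq1}) before combining the two results in Corollary~\ref{UB_nueq1_3}. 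Your product estimate is also the upper-bound mirror image of the technique the paper uses for its lower bounds (cf.\ Lemma~\ref{LB_nueq1_aux}, where such products are bounded from below through logarithm estimates), so your route makes the upper and lower bound arguments structurally symmetric; the price is somewhat more bookkeeping with explicit constants than the paper's induction requires.
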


\begin{proof}[Proof of Proposition~\ref{UB_nueq1_2}]
Throughout this proof let $\varepsilon \in (0,\min \{\nicefrac{1}{2}, \gamma\alpha \})$, let $\beta \in (0,\nicefrac{1}{2})$ be given by 
\begin{equation}
\beta = \min \{\nicefrac{1}{2}, \gamma\alpha \} - \varepsilon,
\end{equation}
and let $c = (c_n)_{n \in \N} \colon \N \to [0,\infty]$ be the function which satisfies for all $n \in \N$ that 
\begin{equation}
\label{UB_nueq1_2:eq1}
c_n = \max \! \left\{  \left\{ \tfrac{\EXp{\norm{\Theta_k-\EXp{X_1}}^2}}{k^{-2\beta}} \colon k \in \{1,2,\ldots, n\} \right\} \cup \left\{ (\gamma\alpha)^2 \,\EXP{\norm{X_1-\EXp{X_1}}^2} \right\} \! \right\}.
\end{equation}
Note that item~\eqref{explicit_expressions:item4} in Proposition~\ref{explicit_expressions} implies that for all $n \in \N$ it holds that
\begin{equation}
\EXP{\norm{\Theta_n-\EXp{X_1}}^2} < \infty.
\end{equation}
Hence, we obtain that for all $n \in \N$ it holds that
\begin{equation}
\label{UB_nueq1_2:eq2}
c_n < \infty.
\end{equation}
Next observe that
\begin{equation}
\begin{split}
&\limsup_{n \to \infty} \left[n^2 \! \left( (1-\tfrac{\gamma\alpha}{n})^2 (n-1)^{-2\beta} - n^{-2\beta}  \right) \right]\\
&=
\limsup_{n \to \infty} \left[n^2\! \left(  \Big[1-\tfrac{2\gamma\alpha}{n} + (\tfrac{\gamma\alpha}{n})^2\Big] (n-1)^{-2\beta} - n^{-2\beta}  \right)\right] \\
&=
\limsup_{n \to \infty} \left[n^2 ((n-1)^{-2\beta} - n^{-2\beta}) - n^2 (\tfrac{2\gamma\alpha}{n} )(n-1)^{-2\beta} + n^2(\tfrac{\gamma\alpha}{n})^2 (n-1)^{-2\beta}\right] \\
&=
\limsup_{n \to \infty}\left[ n^2 ((n-1)^{-2\beta} - n^{-2\beta})  - (2\gamma\alpha)n(n-1)^{-2\beta} + \tfrac{\left(\gamma\alpha\right)^2}{(n-1)^{2\beta}}\right] \\
&=
\limsup_{n \to \infty} \left[n^2 ((n-1)^{-2\beta} - n^{-2\beta})  - (2\gamma\alpha)n(n-1)^{-2\beta}\right].
\end{split}
\end{equation}
The fact that for all $n \in \{2,3,\ldots \}$ it holds that
\begin{equation}
\begin{split}
(n-1)^{-2\beta} - n^{-2\beta} 
&= 
- \left[x^{-2\beta} \right]_{x = n-1}^{x = n} 
= 
-\int_{n-1}^n (-2\beta) x^{-2\beta-1} \, \mathrm{d}x \\
&= 
2\beta \left[\int_{n-1}^n  x^{-2\beta-1} \, \mathrm{d}x \right]
\leq
2\beta(n-1)^{-2\beta-1}
\end{split}
\end{equation} 
hence proves that 
\begin{equation}
\label{UB_nueq1_2:eq3}
\begin{split}
&\limsup_{n \to \infty} \left[n^2 \!\left( (1-\tfrac{\gamma\alpha}{n})^2 (n-1)^{-2\beta} - n^{-2\beta}  \right) \right] \\
&\leq
\limsup_{n \to \infty}  \left[ n^2 2\beta(n-1)^{-2\beta-1}  - (2\gamma\alpha)n(n-1)^{-2\beta} \right]\\
&=
\limsup_{n \to \infty}  \left[n(n-1)^{-2\beta}( 2\beta n (n-1)^{-1}  - 2\gamma\alpha) \right] .
\end{split}
\end{equation}
Moreover, note that the fact that $2\beta = \min \{1, 2\gamma\alpha \} - 2\varepsilon \leq  2\gamma\alpha - 2\varepsilon< 2\gamma\alpha$ ensures that 
\begin{equation}
\limsup_{n \to \infty} \left[ 2\beta (\tfrac{n+1}{n}) - 2\gamma\alpha\right] = \liminf_{n \to \infty} \left[ 2\beta (\tfrac{n+1}{n}) - 2\gamma\alpha\right] = \ 2\beta  - 2\gamma\alpha < 0.
\end{equation}
The fact that $2\beta  < 2\gamma\alpha$, the fact that $2\beta  < 1$, and (\ref{UB_nueq1_2:eq3}) hence establish that
\begin{equation}
\begin{split}
&\limsup_{n \to \infty} \left[n^2 \! \left( (1-\tfrac{\gamma\alpha}{n})^2 (n-1)^{-2\beta} - n^{-2\beta}  \right) \right]\\
&\leq
\limsup_{n \to \infty} \left[\tfrac{n}{(n-1)^{2\beta}} \left( 2\beta (\tfrac{n}{n-1})  - 2\gamma\alpha\right)\right]  \\
&=
\limsup_{n \to \infty} \bigg[\! \left[\tfrac{(n+1)}{n^{2\beta}} \right] \left[ 2\beta (\tfrac{n+1}{n})  - 2\gamma\alpha\right] \! \bigg]  \\
&=
\lim_{n \to \infty} \bigg[\! \left[\tfrac{(n+1)}{n^{2\beta}} \right] \left[ 2\beta (\tfrac{n+1}{n})  - 2\gamma\alpha\right] \! \bigg]  \\
&=
\left[\lim_{n \to \infty}  \left[\tfrac{(n+1)}{n^{2\beta}} \right]  \right] \left[\lim_{n \to \infty} \left[ 2\beta (\tfrac{n+1}{n})  - 2\gamma\alpha\right] \right]   \\
&=
\left[\lim_{n \to \infty}  \left[n^{1-2\beta} +n^{-2\beta} \right]  \right] \left[ 2\beta  - 2\gamma\alpha \right]   \\
&=
\left[\lim_{n \to \infty}  n^{1-2\beta} \right] \left[ 2\beta  - 2\gamma\alpha\right] 
= 
-\infty.
\end{split}
\end{equation}
This implies that there exists $m \in \N$ such that for all $n \in \N \cap (m,\infty)$ it holds that
\begin{equation}
\label{UB_nueq1_2:eq4}
n^2 \! \left( (1-\tfrac{\gamma\alpha}{n})^2 (n-1)^{-2\beta} - n^{-2\beta}  \right) \leq -1.
\end{equation}
Next note that \eqref{UB_nueq1_2:eq2} establishes that $c_m < \infty$.
Moreover, observe that \eqref{UB_nueq1_2:eq1} ensures that for all $n \in \{1,2,\ldots, m \}$ it holds that
\begin{equation}
\label{UB_nueq1_2:eq5}
\begin{split}
\Exp{\norm{\Theta_n - \EXp{X_1}}^2} 
= 
\left[\tfrac{\EXp{\norm{\Theta_n-\EXp{X_1}}^2}}{n^{-2\beta}}\right] n^{-2\beta}
\leq 
c_m n^{-2\beta}.
\end{split}
\end{equation}
Next we claim that for all $n \in \{m,m+1,\ldots \}$ it holds that
\begin{equation}
\label{UB_nueq1_2:eq6}
\EXP{\norm{\Theta_n-\EXp{X_1}}^2} \leq c_m n^{-2\beta}.
\end{equation} 
Note that \eqref{UB_nueq1_2:eq5} establishes (\ref{UB_nueq1_2:eq6}) in the base case $n = m$.
For the induction step $ \{m,m+1,\ldots \} \ni (n-1) \to n \in  \N \cap (m,\infty)$ note that item~\eqref{explicit_expressions:item3} in Proposition~\ref{explicit_expressions} assures that for all $n \in  \{2,3,\ldots \}$ with $\EXP{\norm{\Theta_{n-1}-\EXp{X_1}}^2} \leq c_m (n-1)^{-2\beta}$ it holds that
\begin{equation}
\begin{split}
&\Exp{\norm{\Theta_n - \EXp{X_1}}^2} \\
&=
(1-\tfrac{\gamma\alpha}{n})^2 \, \Exp{\norm{\Theta_{n-1} - \EXp{X_1}}^2} + ( \tfrac{\gamma\alpha}{n})^2 \,\Exp{\norm{X_1 - \EXp{X_1}}^2} \\
&\leq 
(1-\tfrac{\gamma\alpha}{n})^2 \, c_m (n-1)^{-2\beta} + ( \tfrac{\gamma\alpha}{n})^2 \, \Exp{\norm{X_1 - \EXp{X_1}}^2} \\
&=
(1-\tfrac{\gamma\alpha}{n})^2 \, c_m (n-1)^{-2\beta} - c_m n^{-2\beta} + ( \tfrac{\gamma\alpha}{n})^2 \,\Exp{\norm{X_1 - \EXp{X_1}}^2} + c_m n^{-2\beta}\\
&=
\tfrac{1}{n^2} \Big[ c_m\left[ n^2 \! \left( (1-\tfrac{\gamma\alpha}{n})^2 (n-1)^{-2\beta} - n^{-2\beta}  \right)\right] + ( \gamma\alpha )^2 \, \Exp{\norm{X_1 - \EXp{X_1}}^2} \Big] \\
&\quad + c_m n^{-2\beta}.
\end{split}
\end{equation}
This, \eqref{UB_nueq1_2:eq1}, and \eqref{UB_nueq1_2:eq4} demonstrate that for all $n \in  \N \cap (m,\infty)$ with $\EXP{\norm{\Theta_{n-1}-\EXp{X_1}}^2} \leq c_m (n-1)^{-2\beta}$ it holds that
\begin{equation}
\begin{split}
&\Exp{\norm{\Theta_n - \EXp{X_1}}^2} \\
&\leq 
\tfrac{1}{n^2} \Big[ -c_m  + ( \gamma\alpha )^2 \, \Exp{\norm{X_1 - \EXp{X_1}}^2} \Big] + c_m n^{-2\beta} \\
&\leq 
\tfrac{1}{n^2} \Big[ -( \gamma\alpha )^2\, \Exp{\norm{X_1 - \EXp{X_1}}^2} +  ( \gamma\alpha )^2\,\Exp{\norm{X_1 - \EXp{X_1}}^2} \Big] + c_m n^{-2\beta} \\
&=
c_m n^{-2\beta}.
\end{split}
\end{equation}
Induction thus proves \eqref{UB_nueq1_2:eq6}.
Combining \eqref{UB_nueq1_2:eq5} and \eqref{UB_nueq1_2:eq6} establishes \eqref{UB_nueq1_2:concl1}.
The proof of Proposition~\ref{UB_nueq1_2} is thus completed.
\end{proof}

\subsection{Refined upper errors estimates in the case of fast decaying learning rates}
\label{subsect:UB_fast_refined}

\subsubsection[On an asymptotic property for fast decaying learning rates]{A characterization of an asymptotic property for fast decaying learning rates}
\label{subsubsect:UB_fast_aux}

\begin{lemma}
\label{aux_UB_nueq1}
Let $\beta \in (0,\infty)$, $(\gamma_n)_{n \in \N} \subseteq (0,\infty)$ satisfy for all $n \in \N$ that $\gamma_n = \nicefrac{\beta}{n}$. 
Then the following two statements are equivalent:
\begin{enumerate}[(i)]
\item \label{aux_UB_nueq1:item1}
It holds that
\begin{equation}
\liminf_{l \to \infty}\left[\frac{\gamma_l - \gamma_{l-1}}{(\gamma_l)^{2}} + \frac{2\gamma_{l-1}}{\gamma_{l}} - \gamma_{l-1} \right] > 0.
\end{equation}
\item \label{aux_UB_nueq1:item2}
It holds that $\beta > \nicefrac{1}{2}$.
\end{enumerate}
\end{lemma}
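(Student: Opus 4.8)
The plan is to evaluate the bracketed quantity explicitly as a function of $l$ and $\beta$ and then read off both implications at once from a single limit computation. Substituting $\gamma_n = \nicefrac{\beta}{n}$, I first compute the three ingredients separately: the forward difference $\gamma_l - \gamma_{l-1} = \beta(\nicefrac{1}{l} - \nicefrac{1}{(l-1)}) = \nicefrac{-\beta}{(l(l-1))}$, the square $(\gamma_l)^2 = \nicefrac{\beta^2}{l^2}$, and the ratio $\nicefrac{\gamma_{l-1}}{\gamma_l} = \nicefrac{l}{(l-1)}$. Dividing the difference by $(\gamma_l)^2$ gives $\frac{\gamma_l - \gamma_{l-1}}{(\gamma_l)^2} = \frac{-l}{\beta(l-1)}$, while $\frac{2\gamma_{l-1}}{\gamma_l} = \frac{2l}{l-1}$ and $\gamma_{l-1} = \nicefrac{\beta}{(l-1)}$.

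Next I add these three pieces to obtain, for every $l \in \{2,3,\ldots\}$,
\[
\frac{\gamma_l - \gamma_{l-1}}{(\gamma_l)^2} + \frac{2\gamma_{l-1}}{\gamma_l} - \gamma_{l-1} = \frac{-l}{\beta(l-1)} + \frac{2l}{l-1} - \frac{\beta}{l-1}.
\]
Each of the three summands converges as $l \to \infty$: the first to $-\nicefrac{1}{\beta}$, the second to $2$, and the third (which is of order $\nicefrac{1}{l}$) to $0$. Since the limit of the full expression therefore exists, the $\liminf$ in item~\eqref{aux_UB_nueq1:item1} may be replaced by an honest limit, and equals
\[
\liminf_{l \to \infty}\left[\frac{\gamma_l - \gamma_{l-1}}{(\gamma_l)^2} + \frac{2\gamma_{l-1}}{\gamma_l} - \gamma_{l-1}\right] = 2 - \frac{1}{\beta}.
\]

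Finally I conclude the equivalence directly: $2 - \nicefrac{1}{\beta} > 0$ holds if and only if $\nicefrac{1}{\beta} < 2$, which—because $\beta \in (0,\infty)$—holds if and only if $\beta > \nicefrac{1}{2}$. Thus item~\eqref{aux_UB_nueq1:item1} is equivalent to item~\eqref{aux_UB_nueq1:item2}. I do not anticipate a genuine obstacle here, as the argument is a direct computation; the only point requiring a little care is the passage from $\liminf$ to an actual limit, which is justified by the convergence of all three summands, and in particular by the observation that the subtracted term $\gamma_{l-1}$ vanishes in the limit and so does not affect the sign of the resulting constant $2 - \nicefrac{1}{\beta}$.
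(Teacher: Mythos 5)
Your proof is correct and follows essentially the same route as the paper: both substitute $\gamma_n = \nicefrac{\beta}{n}$, simplify the bracketed expression to $-\tfrac{1}{\beta}\bigl(\tfrac{l}{l-1}\bigr) + 2\bigl(\tfrac{l}{l-1}\bigr) - \tfrac{\beta}{l-1}$, observe that the $\liminf$ is an honest limit equal to $2 - \nicefrac{1}{\beta}$, and read off the equivalence with $\beta > \nicefrac{1}{2}$. No gaps; the justification for replacing $\liminf$ by $\lim$ via convergence of each summand is exactly what the paper does implicitly.
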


\begin{proof}[Proof of Lemma~\ref{aux_UB_nueq1}]
First, observe that 
\begin{equation}
\begin{split}
&\liminf_{l \to \infty}\left[\frac{\gamma_l - \gamma_{l-1}}{(\gamma_l)^{2}} + \frac{2\gamma_{l-1}}{\gamma_{l}} - \gamma_{l-1} \right] \\
&=
\liminf_{l \to \infty}\left[\frac{\left(\tfrac{\beta}{l}\right)- \left(\tfrac{\beta}{l-1}\right)}{\left(\tfrac{\beta}{l}\right)^{2}} + \frac{2\left(\tfrac{\beta}{l-1}\right)}{\left(\tfrac{\beta}{l}\right)} - \left(\tfrac{\beta}{l-1}\right) \right] \\
&=
\liminf_{l \to \infty}\left[\frac{1}{\beta} \left[\frac{\left(\tfrac{l-1-l}{l(l-1)}\right)}{\left(\tfrac{1}{l}\right)^{2}} \right] + 2 \left(\tfrac{l}{l-1}\right) - \beta\left(\tfrac{1}{l-1}\right) \right] \\
&=
\liminf_{l \to \infty}\left[-\tfrac{1}{\beta} \left(\tfrac{l^2}{l(l-1)}\right) + 2 \left(\tfrac{l}{l-1}\right) - \beta\left(\tfrac{1}{l-1}\right) \right] \\
&=
\liminf_{l \to \infty}\left[-\tfrac{1}{\beta} \left(\tfrac{l}{l-1}\right) + 2 \left(\tfrac{l}{l-1}\right) - \beta\left(\tfrac{1}{l-1}\right) \right] \\
&=
\lim_{l \to \infty}\left[-\tfrac{1}{\beta} \left(\tfrac{l}{l-1}\right) + 2 \left(\tfrac{l}{l-1}\right) - \beta\left(\tfrac{1}{l-1}\right) \right] \\
&=
-\tfrac{1}{\beta}  + 2 
= 2 - \tfrac{1}{\beta}.
\end{split}
\end{equation}
Therefore, we obtain that
\begin{equation}
\begin{split}
&\qquad 
\left( \liminf_{l \to \infty}\left[\frac{\gamma_l - \gamma_{l-1}}{(\gamma_l)^{2}} + \frac{2\gamma_{l-1}}{\gamma_{l}} - \gamma_{l-1} \right] > 0 \right) \\
\Leftrightarrow
&\qquad   
\left( 2 - \tfrac{1}{\beta} > 0 \right) \\
\Leftrightarrow
&\qquad   
\left(2 > \tfrac{1}{\beta} \right)\\
\Leftrightarrow
&\qquad   
\left(\tfrac{1}{2} < \beta \right).
\end{split}
\end{equation}
The proof of Lemma~\ref{aux_UB_nueq1} is thus completed.
\end{proof}

\subsubsection[Error estimates for large but fast decaying learning rates]{Improved upper error estimates in the case of large but fast decaying learning rates}
\label{subsubsect:UB_large_and_fast_refined}

\begin{prop}
\label{UB_nueq1_1}
Assume Setting~\ref{setting} and assume that $\nu = 1$ and $\gamma \alpha > \nicefrac{1}{2}$.
Then there exists $C \in (0,\infty)$ such that for all $n \in \N$ it holds that
\begin{equation}
\big(\EXP{\norm{\Theta_n-\EXp{X_1}}^2}\big)^{\nicefrac{1}{2}} 
\leq 
 C n^{-\nicefrac{1}{2}}.
\end{equation}
\end{prop}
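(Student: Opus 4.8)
The plan is to reduce the claim to the general recursive estimate of Lemma~\ref{recursive_ineq}, exactly as was done for the slowly decaying regime in the proof of Proposition~\ref{UB_nuleq1}, but now feeding in the fast-decay characterization of Lemma~\ref{aux_UB_nueq1} instead of Lemma~\ref{aux_UB_nuleq1}. Concretely, I would abbreviate $e_n = \Exp{\norm{\Theta_n - \EXp{X_1}}^2}$, write $\kappa = \Exp{\norm{X_1 - \EXp{X_1}}^2}$, and set the learning-rate sequence $\gamma_n = \tfrac{\gamma\alpha}{n}$ for $n \in \N$ (note that $\gamma_n \in (0,\infty)$ since $\gamma, \alpha \in (0,\infty)$).

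First I would invoke items~\eqref{explicit_expressions:item3}--\eqref{explicit_expressions:item4} in Proposition~\ref{explicit_expressions} in the case $\nu = 1$. Item~\eqref{explicit_expressions:item3} yields the affine recursion
\begin{equation}
e_n = (1-\gamma_n)^2\, e_{n-1} + \kappa\,(\gamma_n)^2,
\end{equation}
valid for all $n \in \N$, so in particular hypothesis~\eqref{recursive_ineq:assumption1} of Lemma~\ref{recursive_ineq} holds (with equality, hence a fortiori with the required inequality). Item~\eqref{explicit_expressions:item4} guarantees that $e_n < \infty$ for all $n \in \N_0$, which places $(e_n)_{n \in \N_0}$ inside $[0,\infty)$ as that lemma demands.

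Next I would verify the remaining hypothesis~\eqref{recursive_ineq:assumption2}. Since $\gamma\alpha > \nicefrac{1}{2}$, applying Lemma~\ref{aux_UB_nueq1} with $\beta = \gamma\alpha$ (so that indeed $\gamma_n = \nicefrac{\beta}{n}$ with $\beta > \nicefrac{1}{2}$) shows that
\begin{equation}
\liminf_{l \to \infty}\left[\frac{\gamma_l - \gamma_{l-1}}{(\gamma_l)^{2}} + \frac{2\gamma_{l-1}}{\gamma_{l}} - \gamma_{l-1} \right] > 0.
\end{equation}
With both hypotheses in hand, Lemma~\ref{recursive_ineq} (with $\kappa$ and $(\gamma_n)_{n\in\N}$ as above) produces a constant $C \in (0,\infty)$ satisfying $e_n \leq C\gamma_n = \tfrac{C\gamma\alpha}{n}$ for all $n \in \N$. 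Taking square roots then gives $(e_n)^{\nicefrac{1}{2}} \leq (C\gamma\alpha)^{\nicefrac{1}{2}}\, n^{-\nicefrac{1}{2}}$, which is the asserted bound.

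I do not expect a genuine obstacle here: the substantive work is already encapsulated in the auxiliary results, and the argument is a near-verbatim adaptation of the proof of Proposition~\ref{UB_nuleq1} to the boundary exponent $\nu = 1$. The only points demanding care are, first, matching the threshold $\gamma\alpha > \nicefrac{1}{2}$ in the hypothesis to the critical value $\beta > \nicefrac{1}{2}$ in Lemma~\ref{aux_UB_nueq1} --- this is precisely where the assumption $\gamma\alpha > \nicefrac{1}{2}$ is consumed, and it is exactly this strict inequality that makes the limit inferior strictly positive and thereby unlocks the sharp rate $\nicefrac{1}{2}$ --- and, second, recording the finiteness of $e_n$ from item~\eqref{explicit_expressions:item4} so that Lemma~\ref{recursive_ineq} is legitimately applicable.
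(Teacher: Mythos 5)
Your proposal is correct and matches the paper's own proof essentially verbatim: both establish the recursion and finiteness via items~\eqref{explicit_expressions:item3}--\eqref{explicit_expressions:item4} of Proposition~\ref{explicit_expressions}, verify hypothesis~\eqref{recursive_ineq:assumption2} through Lemma~\ref{aux_UB_nueq1} with $\beta = \gamma\alpha > \nicefrac{1}{2}$, apply Lemma~\ref{recursive_ineq} with $\kappa = \Exp{\norm{X_1 - \EXp{X_1}}^2}$ and $\gamma_n = \tfrac{\gamma\alpha}{n}$, and conclude by taking square roots. No gaps; nothing further is needed.
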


\begin{proof}[Proof of Proposition~\ref{UB_nueq1_1}]
Observe that items~\eqref{explicit_expressions:item3}--\eqref{explicit_expressions:item4} in Proposition~\ref{explicit_expressions} imply that for all $n \in \N$ it holds that
\begin{equation}
\label{UB_nueq1_1:eq1}
\begin{split}
&\Exp{\norm{\Theta_n - \EXp{X_1}}^2} \\
&=
(1-\tfrac{\gamma\alpha}{n})^2 \, \Exp{\norm{\Theta_{n-1} - \EXp{X_1}}^2} + ( \tfrac{\gamma\alpha}{n})^2 \, \Exp{\norm{X_1 - \EXp{X_1}}^2}
< 
\infty.
\end{split}
\end{equation}
Moreover, note that the hypothesis that $\gamma \alpha > \nicefrac{1}{2}$ and Lemma~\ref{aux_UB_nueq1} (with 
$\beta = \gamma \alpha$ 
in the notation of Lemma~\ref{aux_UB_nueq1}) 
ensures that
\begin{equation}
\liminf_{l \to \infty}\left[\frac{\left(\tfrac{\gamma\alpha}{l}\right) - \big(\tfrac{\gamma\alpha}{(l-1)}\big)}{\left(\tfrac{\gamma\alpha}{l}\right)^{2}} + \frac{2\big(\tfrac{\gamma\alpha}{(l-1)}\big)}{\left(\tfrac{\gamma\alpha}{l}\right)} - \left(\tfrac{\gamma\alpha}{(l-1)}\right) \right] > 0.
\end{equation}
Combining this and \eqref{UB_nueq1_1:eq1} with Lemma~\ref{recursive_ineq} (with 
$\kappa = \Exp{\norm{X_1 - \EXp{X_1}}^2}$, 
$(e_n)_{n \in \N_0} = (\Exp{\norm{\Theta_n - \EXp{X_1}}^2})_{n \in \N_0}$,
$(\gamma_n)_{n \in \N} = (\tfrac{\gamma\alpha}{n})_{n \in \N}$ 
in the notation of Lemma~\ref{recursive_ineq}) demonstrates that there exists $C \in (0,\infty)$ such that for all $n \in \N$ it holds that
\begin{equation}
\Exp{\norm{\Theta_n - \EXp{X_1}}^2} \leq C (\tfrac{\gamma\alpha}{n}) = \left[C\gamma\alpha  \right] {n^{-1}}.
\end{equation}
Therefore, we obtain for all $n \in \N$ that
\begin{equation}
\left(\Exp{\norm{\Theta_n - \EXp{X_1}}^2}\right)^{\nicefrac{1}{2}} \leq \left[C\gamma\alpha  \right]^{\nicefrac{1}{2}} n^{-\nicefrac{1}{2}}.
\end{equation}
The proof of Proposition~\ref{UB_nueq1_1} is thus completed.
\end{proof}

\subsubsection[Error estimates in the case of fast decaying learning rates]{Refined upper error estimates in the case of fast decaying learning rates}
\label{subsubsect:UB_fast_refined}

The next result, Corollary~\ref{UB_nueq1_3} below, combines the upper error bounds for fast decaying learning rates obtained in Proposition~\ref{UB_nueq1_2} and Proposition~\ref{UB_nueq1_1} above.

\begin{cor}
\label{UB_nueq1_3}
Assume Setting~\ref{setting} and assume that $\nu = 1$. 
Then for every $\varepsilon \in (0,\infty)$ there exists $C \in (0,\infty)$ such that for all $n \in \N$ it holds that
\begin{equation}
\label{UB_nueq1_3:conclusion}
\big(\EXP{\norm{\Theta_n-\EXp{X_1}}^2}\big)^{\nicefrac{1}{2}} 
\leq 
 C n^{- \min \{\nicefrac{1}{2}, \gamma\alpha - \varepsilon \} }.
\end{equation}
\end{cor}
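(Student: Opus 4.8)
The plan is to obtain Corollary~\ref{UB_nueq1_3} purely by combining the two upper bounds already established, namely Proposition~\ref{UB_nueq1_2} and Proposition~\ref{UB_nueq1_1}, via a case distinction on the product $\gamma\alpha$; no new estimation is needed, only a careful comparison of the exponents. The subtlety to keep in mind is that Proposition~\ref{UB_nueq1_2} produces the exponent $\min\{\nicefrac{1}{2},\gamma\alpha\}-\varepsilon$, whereas the corollary asserts the exponent $\min\{\nicefrac{1}{2},\gamma\alpha-\varepsilon\}$, which in the regime $\gamma\alpha>\nicefrac{1}{2}$ is strictly larger (hence a better rate). The improvement in that regime is supplied precisely by Proposition~\ref{UB_nueq1_1}.

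Fix $\varepsilon\in(0,\infty)$. First I would treat the case $\gamma\alpha>\nicefrac{1}{2}$. Here Proposition~\ref{UB_nueq1_1} provides $C\in(0,\infty)$ with $(\EXP{\norm{\Theta_n-\EXp{X_1}}^2})^{\nicefrac{1}{2}}\leq C n^{-\nicefrac{1}{2}}$ for all $n\in\N$. Since $\min\{\nicefrac{1}{2},\gamma\alpha-\varepsilon\}\leq\nicefrac{1}{2}$, it follows for all $n\in\N$ that $n^{-\nicefrac{1}{2}}\leq n^{-\min\{\nicefrac{1}{2},\gamma\alpha-\varepsilon\}}$, so the same $C$ yields the asserted bound~\eqref{UB_nueq1_3:conclusion}.

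Next I would treat the case $\gamma\alpha\leq\nicefrac{1}{2}$, in which $\min\{\nicefrac{1}{2},\gamma\alpha\}=\gamma\alpha$. Proposition~\ref{UB_nueq1_2} then furnishes $C\in(0,\infty)$ with $(\EXP{\norm{\Theta_n-\EXp{X_1}}^2})^{\nicefrac{1}{2}}\leq C n^{-(\gamma\alpha-\varepsilon)}$ for all $n\in\N$. Because $\gamma\alpha\leq\nicefrac{1}{2}$ together with $\varepsilon>0$ forces $\gamma\alpha-\varepsilon<\nicefrac{1}{2}$, we have $\min\{\nicefrac{1}{2},\gamma\alpha-\varepsilon\}=\gamma\alpha-\varepsilon$, whence this bound is exactly~\eqref{UB_nueq1_3:conclusion}. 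The two cases together establish the corollary.

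The only point requiring care, and therefore the closest thing to an obstacle, is the exponent bookkeeping just described: verifying that $\min\{\nicefrac{1}{2},\gamma\alpha-\varepsilon\}\leq\nicefrac{1}{2}$ in the first case (so that the rate $\nicefrac{1}{2}$ from Proposition~\ref{UB_nueq1_1} dominates) and that $\min\{\nicefrac{1}{2},\gamma\alpha-\varepsilon\}=\gamma\alpha-\varepsilon$ in the second case (so that Proposition~\ref{UB_nueq1_2} already delivers the claimed exponent). Both verifications use only that $x\mapsto n^{-x}$ is nonincreasing for $n\in\N$, so that any upper bound with a smaller exponent remains valid after passing to the possibly smaller exponent $\min\{\nicefrac{1}{2},\gamma\alpha-\varepsilon\}$.
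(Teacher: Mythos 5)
Your proposal is correct and follows essentially the same route as the paper's own proof: a case distinction on $\gamma\alpha>\nicefrac{1}{2}$ versus $\gamma\alpha\leq\nicefrac{1}{2}$, invoking Proposition~\ref{UB_nueq1_1} in the first case and Proposition~\ref{UB_nueq1_2} in the second, with the same exponent bookkeeping (the paper merely writes the inequality $n^{-\nicefrac{1}{2}}\leq n^{-\min\{\nicefrac{1}{2},\gamma\alpha-\varepsilon\}}$ out via an explicit factorization of powers of $n$).
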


\begin{proof}[Proof of Corollary~\ref{UB_nueq1_3}]
To prove \eqref{UB_nueq1_3:conclusion} we distinguish between the cases $\gamma \alpha > \nicefrac{1}{2}$ and $\gamma \alpha \leq \nicefrac{1}{2}$.
We first consider the case $\gamma \alpha > \nicefrac{1}{2}$.
In this case observe that Proposition~\ref{UB_nueq1_1} proves that there exists $C \in (0,\infty)$ such that for all $\varepsilon \in (0,\infty)$, $n \in \N$ it holds that
\begin{equation}
\begin{split}
\big(\EXP{\norm{\Theta_n-\EXp{X_1}}^2}\big)^{\nicefrac{1}{2}} 
&\leq
C n^{-\nicefrac{1}{2}} \\
&=
C n^{[\min \{\nicefrac{1}{2}, \gamma\alpha - \varepsilon \}-\nicefrac{1}{2}]} n^{-\min \{\nicefrac{1}{2}, \gamma\alpha - \varepsilon \}} \\
&=
C n^{\min \{0, \gamma\alpha - \varepsilon-\nicefrac{1}{2} \}} n^{-\min \{\nicefrac{1}{2}, \gamma\alpha - \varepsilon \}} \\
&\leq 
C n^{-\min \{\nicefrac{1}{2}, \gamma\alpha - \varepsilon \}}.
\end{split}
\end{equation}
Hence, we obtain that for every $\varepsilon \in (0,\infty)$ there exists $C \in (0,\infty)$ such that for all $n \in \N$ it holds that 
\begin{equation}
\label{UB_nueq1_3:eq1}
\begin{split}
\big(\EXP{\norm{\Theta_n-\EXp{X_1}}^2}\big)^{\nicefrac{1}{2}} 
\leq 
C n^{-\min \{\nicefrac{1}{2}, \gamma\alpha - \varepsilon \}}.
\end{split}
\end{equation}
This establishes \eqref{UB_nueq1_3:conclusion} in the case $\gamma \alpha > \nicefrac{1}{2}$.
Next we consider the case  $\gamma \alpha \leq \nicefrac{1}{2}$.
In this case we observe that Proposition \ref{UB_nueq1_2} proves that for every $\varepsilon \in (0,\infty)$ there exists $C \in (0,\infty)$ such that for all $n \in \N$ it holds that
\begin{equation}
\label{UB_nueq1_3:eq2}
\begin{split}
\big(\EXP{\norm{\Theta_n-\EXp{X_1}}^2}\big)^{\nicefrac{1}{2}} 
&\leq
C n^{- (\min \{\nicefrac{1}{2}, \gamma\alpha  \} - \varepsilon)}
= 
C n^{-(\gamma\alpha - \varepsilon)}
=
C n^{-\min \{\nicefrac{1}{2}, \gamma\alpha - \varepsilon \}}.
\end{split}
\end{equation}
This proves \eqref{UB_nueq1_3:conclusion} in the case $\gamma \alpha \leq \nicefrac{1}{2}$.
Combining \eqref{UB_nueq1_3:eq1} and \eqref{UB_nueq1_3:eq2} establishes \eqref{UB_nueq1_3:conclusion}.
The proof of Corollary~\ref{UB_nueq1_3} is thus completed.
\end{proof}

\subsection{Upper error estimates in the case of very fast decaying learning rates}
\label{subsect:UB_very_fast}

In this subsection we establish in Lemma~\ref{UB_very_fast} below that the root mean square error of the SGD process in \eqref{setting:eq2} is bounded from above in the case of very fast decaying learning rates (corresponding to the case $\nu > 1$ in Setting~\ref{setting}).

\begin{lemma}
\label{UB_very_fast}
Assume Setting~\ref{setting} and assume that $\nu > 1$. 
Then there exists $C \in (0,\infty)$ such that for all $n \in \N$ it holds that
\begin{equation}
\label{UB_very_fast:concl1}
	\big(\EXP{\norm{\Theta_n-\EXp{X_1}}^2}\big)^{\nicefrac{1}{2}} 
\leq 
	 C.
\end{equation}
\end{lemma}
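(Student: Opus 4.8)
The plan is to start from the explicit expression for $\Exp{\norm{\Theta_n - \EXp{X_1}}^2}$ furnished by item~\eqref{explicit_expressions:item4} in Proposition~\ref{explicit_expressions} and to bound each of the two summands appearing there by a finite constant independent of $n$. The essential feature that distinguishes the very fast decay regime is that for $\nu > 1$ both series $\sum_{l=1}^\infty l^{-\nu}$ and $\sum_{k=1}^\infty k^{-2\nu}$ converge, which forces the products $\prod_{l=k+1}^n (1-\tfrac{\gamma\alpha}{l^\nu})$ to remain uniformly bounded in $n$ and $k$.

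First I would record the elementary inequality that for every $x \in \R$ it holds that $|1-x| \leq 1 + |x| \leq e^{|x|}$. Combining this with the bound $\sum_{l=1}^n \tfrac{\gamma\alpha}{l^\nu} \leq \gamma\alpha \sum_{l=1}^\infty l^{-\nu} < \infty$ (a consequence of $\nu > 1$), I obtain for all $n \in \N$ and all $k \in \{0,1,\ldots,n\}$ that
\[
\left|\prod_{l=k+1}^n \big(1-\tfrac{\gamma\alpha}{l^\nu}\big)\right| \leq \prod_{l=k+1}^n e^{\gamma\alpha/l^\nu} = \exp\!\Big(\gamma\alpha \sum_{l=k+1}^n l^{-\nu}\Big) \leq \exp\!\Big(\gamma\alpha \sum_{l=1}^\infty l^{-\nu}\Big) =: K < \infty.
\]
Using this, I would bound the first summand in item~\eqref{explicit_expressions:item4} by $K^2 \norm{\xi - \EXp{X_1}}^2$ and the second summand by
\[
\Exp{\norm{X_1 - \EXp{X_1}}^2}\, K^2 (\gamma\alpha)^2 \sum_{k=1}^n k^{-2\nu} \leq \Exp{\norm{X_1 - \EXp{X_1}}^2}\, K^2 (\gamma\alpha)^2 \sum_{k=1}^\infty k^{-2\nu},
\]
where the final series is finite because $2\nu > 1$. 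Adding these two $n$-independent bounds produces a constant $\tilde C \in (0,\infty)$ satisfying $\Exp{\norm{\Theta_n - \EXp{X_1}}^2} \leq \tilde C$ for all $n \in \N$; setting $C = \tilde C^{\nicefrac{1}{2}}$ and extracting square roots then yields \eqref{UB_very_fast:concl1}.

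The only mild subtlety, and the step I would treat with care, is that for small indices $l$ the factor $1-\tfrac{\gamma\alpha}{l^\nu}$ may be negative with absolute value exceeding $1$, so one cannot simply invoke $0 \leq 1-\tfrac{\gamma\alpha}{l^\nu} \leq 1$ as in the fast decay regime of Proposition~\ref{UB_nueq1_1}. Passing to absolute values and applying $|1-x| \leq e^{|x|}$ sidesteps this cleanly, since it is precisely the summability of $(l^{-\nu})_{l \in \N}$ for $\nu > 1$ that keeps the exponent, and hence the whole product, uniformly bounded in $n$.
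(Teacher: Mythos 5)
Your proof is correct, and it shares the paper's overall skeleton: both arguments start from the explicit formula in item~\eqref{explicit_expressions:item4} of Proposition~\ref{explicit_expressions}, bound the products $\prod_{l=k+1}^n (1-\tfrac{\gamma\alpha}{l^\nu})$ uniformly in $n$ and $k$, and then invoke the convergence of $\sum_{k=1}^\infty k^{-2\nu}$. Where you differ is in the mechanism for the product bound, which is exactly the step carrying the real content. The paper fixes $m \in \N \cap (\gamma\alpha, \infty)$ and splits each product at $m$: the finitely many factors with $l \leq m$ (which may have absolute value larger than $1$) are absorbed into a constant defined as a maximum over a finite set, while every factor with $l > m$ lies in $(0,1)$ and is simply discarded. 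You instead bound all factors simultaneously via $\abs{1-x} \leq 1+\abs{x} \leq e^{\abs{x}}$, obtaining the uniform bound $K = \exp\!\big(\gamma\alpha \sum_{l=1}^\infty l^{-\nu}\big)$, which is finite precisely because $\nu > 1$. Your route is cleaner (no index splitting, no finite maximum) and is the standard absolute-convergence device for infinite products, but it consumes the hypothesis $\nu > 1$ already at the product-bounding stage; the paper's splitting argument bounds the products for every $\nu \in (0,\infty)$ and confines the role of fast decay to the single series $\sum_{k} k^{-2\nu}$, which makes clearer that summability of the squared learning rates is what drives the result. One small point of precision: in the degenerate case $\xi = \EXp{X_1}$ and $\EXp{\norm{X_1 - \EXp{X_1}}^2} = 0$ your constant $\tilde C$ vanishes, so $C = \tilde{C}^{\nicefrac{1}{2}} \notin (0,\infty)$; take instead, e.g., $C = \max\{\tilde{C}^{\nicefrac{1}{2}}, 1\}$.
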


\begin{proof}[Proof of Lemma~\ref{UB_very_fast}]
Throughout this proof let $m \in \N \cap (\gamma\alpha, \infty)$ and let $C \in (0,\infty)$ be given by
\begin{equation}
\label{UB_very_fast:eq1}
	C
= 
\max \! 
\left(
	\left[
	\bigcup_{\substack{k,r \, \in \,\N, \\ r \leq k \leq m}}
	\left\{ 
		 {\textstyle\prod\limits_{l = r}^k }\Abs{1-\tfrac{\gamma\alpha}{l^\nu}}^2
	\right\}
	\right] \cup 
	\left\{ 
		1
	\right\}
\right).
\end{equation}
Observe that item~\eqref{explicit_expressions:item4} in Proposition~\ref{explicit_expressions} ensures that for all $n \in \N$ it holds that
\begin{equation}
\begin{split}
	&\Exp{\norm{\Theta_n - \EXp{X_1}}^2} \\
&=
	\norm{\xi-\EXp{X_1}}^2 \left[ \prod_{l = 1}^n (1-\tfrac{\gamma\alpha}{l^\nu})\right]^2  \\
&\quad + 
	\Exp{\norm{X_1 - \EXp{X_1}}^2}
	\left[
		\sum_{k = 1}^n 
		\left[
			\tfrac{\gamma\alpha}{k^\nu}\left( \prod_{l = k+1}^n (1-\tfrac{\gamma\alpha}{l^\nu})\right)
		\right]^2
	\right] \\
&=
	\norm{\xi-\EXp{X_1}}^2 
	\left[ \prod_{l = 1}^{\min\{m,n \}} \Abs{1-\tfrac{\gamma\alpha}{l^\nu}}^2 \right] 
	\left[ \prod_{l = \min\{m,n \}+1}^n \Abs{1-\tfrac{\gamma\alpha}{l^\nu}}^2 \right] \\
&\quad + 
	(\gamma\alpha)^2 \,
	\Exp{\norm{X_1 - \EXp{X_1}}^2} \\
&\quad \cdot	
	\left[
		\sum_{k = 1}^n 
		\left(
			\frac{1}{k^{2\nu}}
			\left[ \prod_{l = k+1}^{\min\{m,n \}} \Abs{1-\tfrac{\gamma\alpha}{l^\nu}}^2 \right] 
			\left[ \prod_{l = \max\{k,\min\{m,n \}\}+1}^n \Abs{1-\tfrac{\gamma\alpha}{l^\nu}}^2 \right]
		\right)
	\right] \\
\end{split}
\end{equation}
This and \eqref{UB_very_fast:eq1} establish that  for all $n \in \N$ it holds that
\begin{equation}
\label{UB_very_fast:eq2}
\begin{split}
	&\Exp{\norm{\Theta_n - \EXp{X_1}}^2} \\
&\leq
	C \norm{\xi-\EXp{X_1}}^2 
	\left[ \prod_{l = \min\{m,n \}+1}^n \Abs{1-\tfrac{\gamma\alpha}{l^\nu}}^2 \right] \\
&\quad + 
	(\gamma\alpha)^2 \,
	\Exp{\norm{X_1 - \EXp{X_1}}^2}	
	\left[
		\sum_{k = 1}^n 
		\left(
			\frac{C}{k^{2\nu}}
			\left[ \prod_{l = \max\{k,\min\{m,n \}\}+1}^n \Abs{1-\tfrac{\gamma\alpha}{l^\nu}}^2 \right]
		\right)
	\right].
\end{split}
\end{equation}
Next note that the fact that $m > \gamma\alpha$ assures that for all $ l \in \N \cap (m,\infty)$ it holds that
\begin{equation}
	0 
= 
	1 - 1 
< 
	1 - \tfrac{\gamma\alpha}{m} 
\leq 
	1 - \tfrac{\gamma\alpha}{m^\nu} 
\leq 
	1 - \tfrac{\gamma\alpha}{l^\nu} 
< 
	1-0 
= 
	1.
\end{equation}
Hence, we obtain that for all $ l \in \N \cap (m,\infty)$  it holds that
\begin{equation}
\left(1-\tfrac{\gamma\alpha }{l^\nu}\right) \in (0,1).
\end{equation}
This implies that for all $n,l \in \N$ with  $\min\{m,n \}< l \leq n$ it holds that
\begin{equation}
\Abs{1-\tfrac{\gamma\alpha}{l^\nu}}^2 = \left(1-\tfrac{\gamma\alpha}{l^\nu}\right)^2 \in (0,1).
\end{equation}
Combining this with \eqref{UB_very_fast:eq2} demonstrates that for all $n \in \N$ it holds that
\begin{equation}
\label{UB_very_fast:eq3}
\begin{split}
	&\Exp{\norm{\Theta_n - \EXp{X_1}}^2} \\
&\leq
	C \norm{\xi-\EXp{X_1}}^2 
	+
	(\gamma\alpha)^2 
	C \,
	\Exp{\norm{X_1 - \EXp{X_1}}^2}	
	\left[
		\sum_{k = 1}^n \frac{1}{k^{2\nu}}
	\right].
\end{split}
\end{equation}
Moreover, note that the hypothesis that $\nu > 1$ ensures that
\begin{equation}
\begin{split}
	\sum_{k = 1}^\infty  \frac{1}{k^{2\nu}} 
&= 
	1 + \sum_{k = 2}^\infty  \left[ \int_{k-1}^k \frac{1}{k^{2\nu}} \, \mathrm{d}x \right]
\leq  
	1 + \sum_{k = 2}^\infty \left[ \int_{k-1}^k \frac{1}{x^{2\nu}} \, \mathrm{d}x \right]
= 
	1 + \int_{1}^\infty \frac{1}{x^{2\nu}} \, \mathrm{d}x \\
&= 
	1 + \left[ \left(\tfrac{1}{(1-2\nu)}\right) x^{1-2\nu} \right]_{x = 1}^{x = \infty}
=
	1 - \left(\tfrac{1}{(1-2\nu)}\right) 
=
	1 + \tfrac{1 }{(2\nu-1)}
< 
	\infty.
\end{split}
\end{equation}
This and \eqref{UB_very_fast:eq3} establish \eqref{UB_very_fast:concl1}. 
The proof of Lemma~\ref{UB_very_fast} is thus completed.
\end{proof}

\section{Lower error estimates for the SGD optimization method}
\label{sect:LB}
In this section we establish in Proposition~\ref{LB} and Proposition~\ref{LB_nueq1} below lower bounds for the root mean square distance between the SGD process in \eqref{setting:eq2} and the global minimum of the considered optimization problem (cf.\ item~\eqref{properties_loss:item2} in Lemma~\ref{properties_loss}).
These results show that the upper error bounds obtained in Section~\ref{sect:UB} (see Proposition~\ref{UB_nuleq1} and Corollary~\ref{UB_nueq1_3} above) can essentially not be improved.
Moreover, in Subsection~\ref{subsect:LB_very_fast} below we demonstrate that the SGD process fails to converge to the global minimum of the objective function in the case of very fast decaying learning rates (see Lemma~\ref{LB_nubigger1} below for details).
Finally, in Subsection~\ref{subsect:main_result} below we present Theorem~\ref{main_theorem} which combines the main findings of this article.

\subsection[Lower errors estimates]{Lower errors estimates in the case of slowly and fast decaying learning rates}
\label{subsect:LB_slow_and_fast}

In this subsection we establish in Proposition~\ref{LB} below a lower bound for the root mean square error of the SGD process in \eqref{setting:eq2} in the case of slowly and fast decaying learning rates (corresponding to the case $\nu \leq 1$ in Setting~\ref{setting}).
Our proof of Proposition~\ref{LB} employs the elementary result in Lemma~\ref{positivity} and the elementary and well-known result in Lemma~\ref{fundamental_limit}. For completeness we also provide the proofs of Lemma~\ref{positivity} and Lemma~\ref{fundamental_limit} here. 

\subsubsection{On the strict positivity of the mean square errors}
\label{subsubsect:positivity}

\begin{lemma}
\label{positivity}
Assume Setting~\ref{setting} and assume that $\EXP{\norm{X_1-\EXp{X_1}}^2} > 0$.
Then it holds for all $n \in \N$ that
\begin{equation}
\EXP{\norm{\Theta_n-\EXp{X_1}}^2} > 0.
\end{equation}
\end{lemma}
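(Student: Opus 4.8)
The plan is to read the claim directly off the explicit representation of the mean square error supplied by item~\eqref{explicit_expressions:item4} of Proposition~\ref{explicit_expressions}. First I would invoke that proposition to obtain, for every $n \in \N$, the identity
\[
\EXP{\norm{\Theta_n - \EXp{X_1}}^2}
=
\left[ \prod_{l = 1}^n (1-\tfrac{\gamma\alpha}{l^\nu})\right]^2 \norm{\xi-\EXp{X_1}}^2
+
\EXP{\norm{X_1 - \EXp{X_1}}^2}
\left[
\sum_{k = 1}^n
\left[
\tfrac{\gamma\alpha}{k^\nu}\left( \prod_{l = k+1}^n (1-\tfrac{\gamma\alpha}{l^\nu})\right)
\right]^2
\right].
\]
The first summand on the right-hand side is a product of squares and is therefore nonnegative, so it suffices to show that the second summand is strictly positive.

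The key observation is that the sum contains the summand indexed by $k = n$, whose product factor $\prod_{l = n+1}^n (1-\tfrac{\gamma\alpha}{l^\nu})$ is the empty product and hence equals $1$. Consequently this summand equals $(\tfrac{\gamma\alpha}{n^\nu})^2$, which is strictly positive because $\gamma, \alpha \in (0,\infty)$ and $n^\nu \in (0,\infty)$. Dropping all other (nonnegative) summands thus yields $\sum_{k = 1}^n [ \, \cdots ]^2 \geq (\tfrac{\gamma\alpha}{n^\nu})^2 > 0$.

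Combining this lower bound with the hypothesis $\EXP{\norm{X_1-\EXp{X_1}}^2} > 0$ shows that the second summand is strictly positive, and adding the nonnegative first summand then completes the argument. I expect the only point requiring genuine care to be the isolation of the $k = n$ term: since some of the individual factors $1-\tfrac{\gamma\alpha}{l^\nu}$ could in principle vanish (precisely when $\gamma\alpha = l^\nu$), one cannot simply claim that an arbitrary summand is positive. The $k = n$ term is the canonical choice exactly because its product factor is empty and therefore equals $1$ irrespective of the value of $\gamma\alpha$, which makes its strict positivity unconditional.
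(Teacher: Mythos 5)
Your proof is correct and follows essentially the same route as the paper: invoke item~(iv) of Proposition~\ref{explicit_expressions}, discard the nonnegative first summand, and bound the sum below by its $k=n$ term, whose empty product equals $1$, giving the strictly positive lower bound $\EXP{\norm{X_1-\EXp{X_1}}^2}\,(\tfrac{\gamma\alpha}{n^\nu})^2$. If anything, your write-up is slightly more careful than the paper's, which states the same bound but with the square on $\tfrac{\gamma\alpha}{n^\nu}$ inadvertently dropped, and your remark about why the $k=n$ term (rather than an arbitrary one) must be isolated is exactly the right point of caution.
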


\begin{proof}[Proof of Lemma~\ref{positivity}]
Observe that item~\eqref{explicit_expressions:item4} in Proposition~\ref{explicit_expressions} and the assumption that $\EXP{\norm{X_1-\EXp{X_1}}^2} > 0$ assure that for all $n \in \N$ it holds that
\begin{equation}
\begin{split}
&\Exp{\norm{\Theta_n - \EXp{X_1}}^2} \\
&= 
\left[ \prod_{l = 1}^n (1-\tfrac{\gamma\alpha}{l^\nu})\right]^2 \norm{\xi-\EXp{X_1}}^2 \\
& \quad +  
\Exp{\norm{X_1 - \EXp{X_1}}^2}
\left[
\sum_{k = 1}^n 
\left[
\tfrac{\gamma\alpha}{k^\nu}\left( \prod_{l = k+1}^n (1-\tfrac{\gamma\alpha}{l^\nu})\right)
\right]^2
\right]\\
& \geq
\Exp{\norm{X_1 - \EXp{X_1}}^2}  \left(\tfrac{\gamma\alpha}{n^\nu} \right) > 0.
\end{split} 
\end{equation}
The proof of Lemma~\ref{positivity} is thus completed.
\end{proof}

\subsubsection{Approximations of the exponential function}
\label{subsubsect:approx_exp}

\begin{lemma}
\label{lemma_approx_exp}
Let $ (a_l)_{l \in \N} \subseteq \R $,  $(n_l)_{l \in \N} \subseteq \N $ satisfy that 
$\liminf_{l \to \infty} a_l = \limsup_{l \to \infty} a_l$ and 
$\liminf_{ l \to \infty } n_l = \infty$.
Then
\begin{equation}
\limsup_{ l \to \infty } \ABs{ \!
    \left[ 
      1 +\tfrac{a_l}{ n_l }
    \right]^{ n_l }
    -
    \exp\!
    	\left(
    		\lim_{ l \to \infty }a_l
    	\right) \!
    }
= 0.
\end{equation}
\end{lemma}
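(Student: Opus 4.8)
The plan is to reduce the claim to the continuity of the exponential function by passing through logarithms. First I would set $a = \lim_{l \to \infty} a_l \in \R$ (which exists in $\R$ by the hypothesis $\liminf_{l \to \infty} a_l = \limsup_{l \to \infty} a_l$) and observe that, being convergent, the sequence $(a_l)_{l \in \N}$ is bounded, say $\abs{a_l} \le M$ for all $l \in \N$ and some $M \in (0,\infty)$. The hypothesis $\liminf_{l \to \infty} n_l = \infty$ then provides an $L \in \N$ such that $\abs{\nicefrac{a_l}{n_l}} \le \nicefrac{1}{2}$ for all $l \in \N \cap [L,\infty)$. In particular, for all such $l$ it holds that $1 + \tfrac{a_l}{n_l} \in (\nicefrac{1}{2}, \nicefrac{3}{2}) \subseteq (0,\infty)$, so that the base is positive and I may write $[ 1 + \tfrac{a_l}{n_l} ]^{n_l} = \exp( n_l \ln( 1 + \tfrac{a_l}{n_l}))$.

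The key step is to show that $n_l \ln(1 + \tfrac{a_l}{n_l}) \to a$ as $l \to \infty$. For this I would use the elementary quadratic estimate $\abs{\ln(1+x) - x} \le x^2$, valid for all $x \in [-\nicefrac{1}{2}, \nicefrac{1}{2}]$, which follows from bounding the tail of the power series via $\sum_{k \ge 2} \tfrac{\abs{x}^k}{k} \le \tfrac{1}{2}\sum_{k \ge 2} \abs{x}^k = \tfrac{\abs{x}^2}{2(1-\abs{x})} \le \abs{x}^2$ when $\abs{x} \le \nicefrac{1}{2}$. Applying this with $x = \tfrac{a_l}{n_l}$ yields, for all $l \in \N \cap [L,\infty)$,
\[
\Abs{n_l \ln(1 + \tfrac{a_l}{n_l}) - a_l} = n_l \, \Abs{\ln(1 + \tfrac{a_l}{n_l}) - \tfrac{a_l}{n_l}} \le n_l \left(\tfrac{a_l}{n_l}\right)^2 = \tfrac{(a_l)^2}{n_l} \le \tfrac{M^2}{n_l}.
\]
Since $\liminf_{l \to \infty} n_l = \infty$ forces $\tfrac{M^2}{n_l} \to 0$, and since $a_l \to a$, I would conclude that $n_l \ln(1 + \tfrac{a_l}{n_l}) = a_l + \big(n_l \ln(1 + \tfrac{a_l}{n_l}) - a_l\big) \to a$.

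Finally I would invoke the continuity of $\exp \colon \R \to (0,\infty)$ to deduce that $[1 + \tfrac{a_l}{n_l}]^{n_l} = \exp(n_l \ln(1 + \tfrac{a_l}{n_l})) \to \exp(a)$, whence $\abs{[1 + \tfrac{a_l}{n_l}]^{n_l} - \exp(a)} \to 0$ and, a fortiori, its $\limsup$ equals $0$, which is the assertion. The main obstacle is pushing the varying exponent $n_l$ through the first-order approximation, i.e. establishing that the \emph{error} $n_l\ln(1+\tfrac{a_l}{n_l}) - a_l$ vanishes; once the quadratic logarithm estimate is in hand this error is controlled uniformly by $\tfrac{M^2}{n_l}$ and the conclusion is immediate. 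The surrounding bookkeeping (boundedness of $(a_l)_{l\in\N}$, positivity of the base for large $l$, and continuity of $\exp$) is routine.
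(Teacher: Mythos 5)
Your proof is correct, but it takes a genuinely different route from the paper. You linearize the exponent: writing $[1+\tfrac{a_l}{n_l}]^{n_l} = \exp(n_l \log(1+\tfrac{a_l}{n_l}))$ for large $l$, controlling the error with the quadratic estimate $\abs{\log(1+x)-x} \leq x^2$ for $\abs{x} \leq \nicefrac{1}{2}$, and finishing with continuity of $\exp$. The paper instead expands $[1+\tfrac{a_l}{n_l}]^{n_l}$ by the binomial theorem, interprets the resulting sum as an integral over $\N_0$ with respect to the counting measure, and passes to the limit term by term via Lebesgue's dominated convergence theorem with dominating function $k \mapsto [\sup_{l \in \N}\abs{a_l}]^k/k!$, thereby recovering the power series of $\exp$ directly. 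One concrete advantage of your argument: it never uses that $n_l$ is an integer (only that $n_l \to \infty$ and that the base $1+\tfrac{a_l}{n_l}$ is eventually positive), so it proves in one stroke the more general statement with real exponents, which the paper must derive separately in its Lemma~\ref{fundamental_limit} by a floor-function reduction to the integer case; the paper's binomial expansion, by contrast, is tied to $n_l \in \N$ but avoids logarithms and Taylor estimates entirely, working straight from the series definition of $\exp$. A shared caveat, not a gap in your argument specifically: both proofs implicitly read the hypothesis $\liminf_{l \to \infty} a_l = \limsup_{l \to \infty} a_l$ as saying the common value is finite (you via boundedness of a convergent sequence, the paper via $\sup_{l \in \N}\abs{a_l} < \infty$ in its dominating function), which is how the lemma is used downstream.
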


\begin{proof}[Proof of Lemma~\ref{lemma_approx_exp}]
Throughout this proof let $ f_l \colon \N_0 \to \R $, $ l \in \N $, be the functions which satisfy for all $ l \in \N $, $ k \in \N_0 $ that
\begin{equation}
  f_l( k )
  =
  \begin{cases}
  	\left[ \prod\limits_{r = 0}^{k-1} (n_l-r) \right]
    \frac{ ( a_l )^k }{ ( n_l )^k k! }
  &
    \colon
    k \leq n_l
  \\
    0
  &
    \colon
    k > n_l,
  \end{cases}
\end{equation}
let $F \colon \N_0 \to \R$ be the function which satisfies for all $k \in \N_0$ that
\begin{equation}
F(k) = \frac{\left[ \sup_{ l \in \N } | a_l | \right]^k}{ k! },
\end{equation}
and let $\# \colon \mathcal{P}(\N_0) \to [0,\infty]$ be the counting measure on $\N_0$.
Observe that the binomial theorem proves that for all $l \in \N$ it holds that
\begin{equation}
\label{lemma_approx_exp:eq1}
    \left[ 
      1 
      +
      \tfrac{
        a_l
      }{ n_l }
    \right]^{ n_l }
  =
  \sum_{ k = 0 }^{ n_l }
  	\binom{n_l}{k}
  \left[ 
    \tfrac{ a_l }{ n_l }
  \right]^k
  =
  \sum_{ k = 0 }^{ \infty }
  f_l( k )
  =
  \int_{ \N_0 }
  f_l( k )
  \,
  \#( \mathrm{d} k )
  .
\end{equation}
Moreover, note the hypothesis that $\liminf_{ l \to \infty } n_l = \infty$ ensures that for all $ k \in \N_0 $ it holds that
\begin{equation}
\label{lemma_approx_exp:eq2}
  \lim_{ l \to \infty }
  f_l( k )
  =
  \lim_{ l \to \infty }  \left[ \left[ \textstyle{\prod\limits_{r = 0}^{k-1} (1-\tfrac{r}{n_l})} \right]  \tfrac{ ( a_l )^k }{ k! }\right]
  =
  \frac{
    \left[ \lim_{ l \to \infty } a_l \right]^k}{ k! }.
\end{equation}
In addition, note that for all $k \in \N_0$ it holds that
\begin{equation}
\sup_{ l \in \N }
  \left| f_l( k ) \right|
  \leq
  F(k).
\end{equation}
The fact that
\begin{equation}
	\int_{ \N_0 } F( k ) \, \#( \mathrm{d} k )
= 
	\sum_{k = 0}^{\infty}  \left[\frac{\left[ \sup_{ l \in \N } | a_l | \right]^k}{ k! }\right]
=
	\exp \! \left(\sup_{ l \in \N } | a_l |  \right)
< 
	\infty,
\end{equation}
 Lebesgue's theorem of dominated convergence, and \eqref{lemma_approx_exp:eq2} hence demonstrate that
\begin{equation}
\begin{split}
&
  \lim_{ l \to \infty }
  \left[
  \int_{ \N_0 }
  f_l( k )
  \,
  \#( \mathrm{d} k )
  \right] 
  =
  \int_{ \N_0 }
  \big[\lim\nolimits_{ l \to \infty }
  f_l( k ) \big]
  \,
  \#( \mathrm{d} k ) \\
&  =
  \int_{ \N_0 }
  \frac{
    \left[ \lim_{ l \to \infty } a_l \right]^k
  }{ k! }
  \,
  \#( \mathrm{d} k )
  =
  \sum_{ k = 0 }^{ \infty }
  \frac{
    \left[ 
      \lim_{ l \to \infty } a_l 
    \right]^k
  }{ k! }
  =
  \exp\!\left(
    \lim_{ l \to \infty } a_l 
  \right)
  .
\end{split}
\end{equation}
This and \eqref{lemma_approx_exp:eq1} ensure that
\begin{equation}
  \lim_{ l \to \infty }
  \Big[ 
    \left[ 
      1 
      +
      \tfrac{
        a_l
      }{ n_l }
    \right]^{ n_l }
  \Big]
  =
  \exp\!\left(
    \lim_{ l \to \infty }
    a_l
  \right)
  .
\end{equation}
The proof of Lemma~\ref{lemma_approx_exp} is thus completed.
\end{proof}

\begin{lemma}
\label{fundamental_limit}
Let $ (a_l)_{l \in \N} \subseteq \R $,  $(n_l)_{l \in \N} \subseteq \R $ satisfy that 
$\liminf_{l \to \infty} a_l = \limsup_{l \to \infty} a_l$ and 
$\liminf_{ l \to \infty } n_l = \infty$.
\begin{equation}
\limsup_{ l \to \infty } \ABs{ \!
    \left[ 
      1 +\tfrac{a_l}{ n_l }
    \right]^{ n_l }
    -
    \exp\!
    	\left(
    		\lim_{ l \to \infty }a_l
    	\right) \!
    }
= 0.
\end{equation}
\end{lemma}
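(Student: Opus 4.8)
The plan is to reduce Lemma~\ref{fundamental_limit}, whose exponents $n_l$ are real, to the already-established Lemma~\ref{lemma_approx_exp}, whose exponents are positive integers. First I would record that the hypothesis $\liminf_{l\to\infty} a_l = \limsup_{l\to\infty} a_l$ means that $a := \lim_{l\to\infty} a_l$ exists (and, as is implicit in Lemma~\ref{lemma_approx_exp}, is finite), and that $\liminf_{l\to\infty} n_l = \infty$ means $n_l \to \infty$. In particular $n_l \geq 1$ for all sufficiently large $l$, and since $\tfrac{a_l}{n_l} \to 0$ the base $1 + \tfrac{a_l}{n_l}$ tends to $1$ and is eventually positive, so the real power $[1 + \tfrac{a_l}{n_l}]^{n_l}$ is well defined for large $l$.

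For all such $l$ I would set $m_l = \floor{n_l} \in \N$, so that $m_l \to \infty$ and $n_l - m_l \in [0,1)$, and then split the power multiplicatively as
\begin{equation}
\Big[ 1 + \tfrac{a_l}{n_l} \Big]^{n_l} = \Big[ 1 + \tfrac{a_l}{n_l} \Big]^{m_l} \cdot \Big[ 1 + \tfrac{a_l}{n_l} \Big]^{n_l - m_l}.
\end{equation}
The second (fractional) factor I expect to tend to $1$: its base tends to $1$ and its exponent lies in the bounded interval $[0,1)$, so the elementary bound $|(1+x)^t - 1| \leq |x|$, valid for all $x > -1$ and $t \in [0,1]$, shows that this factor converges to $1$ as $\tfrac{a_l}{n_l} \to 0$.

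The main work is the integer-exponent factor. Here I would rewrite the base as $1 + \tfrac{a_l}{n_l} = 1 + \tfrac{c_l}{m_l}$ with $c_l := \tfrac{a_l m_l}{n_l}$; since $\tfrac{m_l}{n_l} \to 1$ (because $0 \leq n_l - m_l < 1$ and $n_l \to \infty$) and $a_l \to a$, it follows that $c_l \to a$, whence $\liminf_{l\to\infty} c_l = \limsup_{l\to\infty} c_l = a$. Applying Lemma~\ref{lemma_approx_exp} with $(a_l)$ replaced by $(c_l)$ and $(n_l)$ replaced by $(m_l)$ then yields $[1 + \tfrac{c_l}{m_l}]^{m_l} \to \exp(a)$. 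Multiplying the two limits gives $[1 + \tfrac{a_l}{n_l}]^{n_l} \to \exp(a) \cdot 1 = \exp(a)$, which is precisely the asserted conclusion.

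I expect the only genuinely delicate point to be the fractional factor $[1 + \tfrac{a_l}{n_l}]^{n_l - m_l}$, together with the bookkeeping that guarantees all the real powers are well defined (bases eventually positive). Everything else — the floor decomposition, the limit $\tfrac{m_l}{n_l} \to 1$, and the convergence $c_l \to a$ — is routine once $a$ is known to be finite.
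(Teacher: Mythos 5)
Your proposal is correct and follows essentially the same route as the paper's own proof: the floor decomposition $n_l = \floor{n_l} + (n_l - \floor{n_l})$, rescaling the base as $1+\tfrac{a_l}{n_l} = 1 + \tfrac{(a_l\floor{n_l}/n_l)}{\floor{n_l}}$ so that Lemma~\ref{lemma_approx_exp} applies to the integer-exponent factor, and killing the fractional factor with the elementary inequality $\abs{1-r^\alpha}\leq\abs{1-r}$ for $\alpha\in[0,1]$, $r\in(0,\infty)$ (your bound $\abs{(1+x)^t-1}\leq\abs{x}$ is the same estimate). No gaps; even your side remark that the common value of $\liminf$ and $\limsup$ must implicitly be finite matches the tacit assumption in the paper.
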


\begin{proof}[Proof of Lemma~\ref{fundamental_limit}]
Throughout this proof let $\floor{\cdot} \colon \R \to \Z$ be the function which satisfies for all $x \in \R$ that $\floor{x} = \max( (-\infty,x] \cap  \Z)$.  
Observe that the hypothesis that $\liminf_{ l \to \infty } n_l = \infty$ ensures that
\begin{equation}
	1 
\geq 
	\limsup_{ l \to \infty } \left[ \tfrac{\floor{n_l} }{n_l} \right]
\geq
	\liminf_{ l \to \infty } \left[ \tfrac{\floor{n_l} }{n_l} \right]
\geq
	\liminf_{ l \to \infty } \left[ \tfrac{(n_l - 1)}{n_l} \right]
=
	\liminf_{ l \to \infty } \left[ 1 - \tfrac{1}{n_l} \right] 
= 
	1.
\end{equation}
This implies that 
$
\lim_{ l \to \infty } \big[ \tfrac{\floor{n_l} }{n_l} \big] = 1
$.
The hypothesis that $\liminf_{l \to \infty} a_l = \limsup_{l \to \infty} a_l$ therefore assures that
\begin{equation}
	\lim_{ l \to \infty } \left[ \tfrac{a_l \floor{n_l} }{n_l} \right]
=
	\left[\lim_{ l \to \infty } a_l\right] \left[ \lim_{ l \to \infty } \tfrac{ \floor{n_l} }{n_l} \right]
=
	\lim_{ l \to \infty } a_l.
\end{equation}
This, the fact that $\liminf_{ l \to \infty } \floor{n_l} = \infty$, and Lemma~\ref{lemma_approx_exp} 
(with
$(a_l)_{l \in \N} = (\tfrac{a_l \floor{n_l} }{n_l})_{l \in \N}$,
$(n_l)_{l \in \N} = (\floor{n_l})_{l \in \N}$
in the notation of Lemma~\ref{lemma_approx_exp})
proves that
\begin{equation}
\label{fundamental_limit:eq2}
	\lim_{ l \to \infty }  \left( \left[ 1 + \tfrac{ \left(\frac{a_l\floor{n_l} }{n_l}\right)}{\floor{n_l} }  \right]^{ \floor{n_l} } \right)
= 
	\exp \! \left( \lim_{ l \to \infty }  \left[ \tfrac{a_l\floor{n_l} }{n_l} \right]\right) 
=
	\exp \! \left( \lim_{ l \to \infty }  a_l\right).
\end{equation}
Next note that 
the fact that for all $\alpha \in [0,1], r \in (0,1]$ it holds that $r \leq r^\alpha \leq 1$ and 
the fact that for all $\alpha \in [0,1], r \in [1,\infty)$ it holds that $1 \leq r^\alpha \leq r$ 
show that for all  $\alpha \in [0,1], r \in (0,\infty)$ it holds that
\begin{equation}
\abs{1-r^\alpha} \leq \abs{1-r}.
\end{equation}
Combining this and 
the fact that for all $l \in \N$ it holds that $n_l - \floor{n_l} \in [0,1]$ with 
the hypothesis that $\liminf_{ l \to \infty } n_l = \infty$ and 
the fact that $\sup_{l \in \N} \abs{a_l} < \infty$ 
demonstrates that 
\begin{equation}
	\limsup_{ l \to \infty }
		\left| 
			1 - \left[ 1 + \tfrac{a_l}{ n_l }\right]^{ n_l - \floor{n_l} }
		\right|
\leq	
	\limsup_{ l \to \infty }
		\left| 
			1 - \left[ 1 + \tfrac{a_l}{ n_l }\right]
		\right|
=
	\limsup_{ l \to \infty }
		\big| 
			\tfrac{a_l}{ n_l }
		\big|
=
	0.
\end{equation}
This and \eqref{fundamental_limit:eq2} establish that
\begin{equation}
\begin{split}
\lim_{ l \to \infty } 
	\Big[ 
		\left[ 1 + \tfrac{a_l}{ n_l }\right]^{ n_l }
	\Big]
&=
\lim_{ l \to \infty } 
	\left[ 
		\left[ 1 + \tfrac{a_l}{ n_l }\right]^{ \floor{n_l} }
		\left[ 1 + \tfrac{a_l}{ n_l }\right]^{ n_l - \floor{n_l} }
	\right] \\
&=
\lim_{ l \to \infty } 
	\left[ 
		\left[ 1 + \tfrac{ \left(\frac{a_l\floor{n_l} }{n_l}\right)}{\floor{n_l} }  \right]^{ \floor{n_l} }
		\left[ 1 + \tfrac{a_l}{ n_l }\right]^{ n_l - \floor{n_l} }
	\right] \\
&=
	\left[ 
		\lim_{ l \to \infty } \left[ 1 + \tfrac{ \left(\frac{a_l\floor{n_l} }{n_l}\right)}{\floor{n_l} }  \right]^{ \floor{n_l} }
	\right]
	\left[ 
		\lim_{ l \to \infty } \left[ 1 + \tfrac{a_l}{ n_l }\right]^{ n_l - \floor{n_l} }
	\right]\\
&=
	\exp \! \left( \lim_{ l \to \infty }  a_l\right).
\end{split}
\end{equation}
The proof of Lemma~\ref{fundamental_limit} is thus completed.
\end{proof}

\subsubsection{Lower error estimates}
\label{subsubsect:LB}

\begin{lemma}
\label{aux_LB_2}
Let $\beta \in (0,\infty)$, $\nu \in (0,1]$. Then
\begin{equation}
\liminf_{n \to \infty} 
\left[ n^\nu \!
	\left( 
		\sum_{k = 1}^n \left[ \tfrac{\beta}{k^\nu}\left( \prod_{l = k+1}^n (1-\tfrac{\beta}{l^\nu})\right) \right]^{\!2} 
	\right)
\right]
\geq
 \frac{\beta^2\exp(-2^\nu\beta )}{2}  .
\end{equation}
\end{lemma}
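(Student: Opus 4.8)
The plan is to bound the sum from below by discarding all summands except those indexed by the last $\floor{\nicefrac{n^\nu}{2}}$ values of $k$, and then to identify the limit of the resulting expression via Lemma~\ref{fundamental_limit}. Write $S_n = \sum_{k=1}^n \big[\tfrac{\beta}{k^\nu}\big(\prod_{l=k+1}^n(1-\tfrac{\beta}{l^\nu})\big)\big]^2$ and assume throughout that $n$ is large enough that $n^\nu > 2^\nu\beta$ (harmless, since only the behaviour as $n\to\infty$ matters). First I would pass to the window $\mathcal{W}_n = \{k\in\N\colon n-\floor{\nicefrac{n^\nu}{2}} < k\leq n\}$, which has exactly $\floor{\nicefrac{n^\nu}{2}}$ elements. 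Since $\nu\leq 1$ forces $\nicefrac{n^\nu}{2}\leq\nicefrac{n}{2}$, every $k\in\mathcal{W}_n$ satisfies $k\geq\nicefrac{n}{2}$, and hence every factor index $l$ with $k<l\leq n$ satisfies $l>\nicefrac{n}{2}$ and therefore $\tfrac{\beta}{l^\nu}\leq\tfrac{2^\nu\beta}{n^\nu}\in(0,1)$.

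The key step is a single uniform lower bound for the product over the whole window. For $k\in\mathcal{W}_n$ each of the at most $\floor{\nicefrac{n^\nu}{2}}$ factors obeys $1-\tfrac{\beta}{l^\nu}\geq 1-\tfrac{2^\nu\beta}{n^\nu}>0$, so that
\begin{equation}
\prod_{l=k+1}^n\Big(1-\tfrac{\beta}{l^\nu}\Big) \geq \Big(1-\tfrac{2^\nu\beta}{n^\nu}\Big)^{n-k} \geq \Big(1-\tfrac{2^\nu\beta}{n^\nu}\Big)^{\floor{\nicefrac{n^\nu}{2}}},
\end{equation}
where the last inequality uses $1-\tfrac{2^\nu\beta}{n^\nu}\in(0,1)$ together with $n-k\leq\floor{\nicefrac{n^\nu}{2}}$. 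Combining this with $\tfrac{1}{k^{2\nu}}\geq\tfrac{1}{n^{2\nu}}$ (valid as $k\leq n$) and with the fact that $\mathcal{W}_n$ contains $\floor{\nicefrac{n^\nu}{2}}$ indices yields
\begin{equation}
n^\nu S_n \geq n^\nu\sum_{k\in\mathcal{W}_n}\frac{\beta^2}{n^{2\nu}}\Big(1-\tfrac{2^\nu\beta}{n^\nu}\Big)^{2\floor{\nicefrac{n^\nu}{2}}} = \beta^2\,\frac{\floor{\nicefrac{n^\nu}{2}}}{n^\nu}\Big(1-\tfrac{2^\nu\beta}{n^\nu}\Big)^{2\floor{\nicefrac{n^\nu}{2}}}.
\end{equation}

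Finally I would pass to the limit. The prefactor satisfies $\tfrac{\floor{\nicefrac{n^\nu}{2}}}{n^\nu}\to\tfrac12$, and the power is handled by Lemma~\ref{fundamental_limit} applied with $n_l=2\floor{\nicefrac{l^\nu}{2}}$ and $a_l=-2^\nu\beta\,\tfrac{2\floor{\nicefrac{l^\nu}{2}}}{l^\nu}$, so that $\tfrac{a_l}{n_l}=-\tfrac{2^\nu\beta}{l^\nu}$, $\liminf_{l\to\infty}n_l=\infty$, and $\lim_{l\to\infty}a_l=-2^\nu\beta$; hence $(1-\tfrac{2^\nu\beta}{n^\nu})^{2\floor{\nicefrac{n^\nu}{2}}}\to\exp(-2^\nu\beta)$. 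Taking $\liminf_{n\to\infty}$ then gives $\liminf_{n\to\infty}n^\nu S_n\geq\tfrac{\beta^2\exp(-2^\nu\beta)}{2}$, as claimed. I expect the main obstacle to be purely bookkeeping: verifying that the window stays inside $[\nicefrac{n}{2},n]$ for every $\nu\in(0,1]$ so that the crude factorwise estimate $\tfrac{\beta}{l^\nu}\leq\tfrac{2^\nu\beta}{n^\nu}$ applies, and correctly casting the floor-distorted power into the hypotheses of Lemma~\ref{fundamental_limit}. It is worth stressing that the window width $\floor{\nicefrac{n^\nu}{2}}$ is precisely the choice that makes the limiting constant come out as $\tfrac{\beta^2\exp(-2^\nu\beta)}{2}$: a window of width $\sim s\,n^\nu$ would instead produce the factor $s\,\exp(-2^{\nu}\beta\cdot 2s)$, which equals $\tfrac{1}{2}\exp(-2^\nu\beta)$ exactly at $s=\tfrac12$.
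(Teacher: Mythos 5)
Your proof is correct and follows essentially the same route as the paper's: restrict the sum to a terminal window of width roughly $\nicefrac{n^\nu}{2}$ (which lies inside $[\nicefrac{n}{2},n]$ since $\nu\leq 1$), bound each summand uniformly from below via $k\leq n$ and the factorwise estimate $1-\tfrac{\beta}{l^\nu}\geq 1-\tfrac{2^\nu\beta}{n^\nu}$, and identify the limiting factor $\exp(-2^\nu\beta)$ through Lemma~\ref{fundamental_limit}. The only differences are bookkeeping: you use a floor-based window and keep the integer exponent $2\floor{\nicefrac{n^\nu}{2}}$ when invoking Lemma~\ref{fundamental_limit}, whereas the paper uses a ceiling-based window and massages the exponent to exactly $n^\nu$.
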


\begin{proof}[Proof of Lemma~\ref{aux_LB_2}]
Throughout this proof let $\ceil{\cdot} \colon \R \to \Z$ be the function which satisfies for all $x \in \R$ that $\ceil{x} = \min( [x,\infty) \cap  \Z)$. 
Observe that the fact that for all $n \in \N$ it holds that $\ceil*{n-\frac{n^\nu}{2}} \geq n-\frac{n^\nu}{2} \geq n - \frac{n}{2} = \frac{n}{2} > 0$ ensures that
\begin{equation}
\begin{split}
&\liminf_{n \to \infty} \left[ n^\nu \! \left( \sum_{k = 1}^n \left[ \tfrac{\beta}{k^\nu}\left( \prod_{l = k+1}^n (1-\tfrac{\beta}{l^\nu})\right) \right]^{\!2} \right) \right]  \\
&\geq 
\liminf_{n \to \infty}  \left[ n^\nu \! \left(  \sum_{k =\ceil*{n-\frac{n^\nu}{2}} }^n \left[ \tfrac{\beta}{k^\nu}\left( \prod_{l = k+1}^n (1-\tfrac{\beta}{l^\nu})\right) \right]^{\!2}\right) \right] \\
&\geq 
\liminf_{n \to \infty}  \left[ n^\nu \! \left( \sum_{k =\ceil*{n-\frac{n^\nu}{2}} }^n \left[ \tfrac{\beta}{n^\nu}\left( \prod_{l = \ceil*{n-\frac{n^\nu}{2}}+1}^n (1-\tfrac{\beta}{l^\nu})\right) \right]^{\!2}\right) \right] \\
&\geq 
\liminf_{n \to \infty} \left[  n^\nu   \left[\tfrac{\beta}{n^{\nu}}\right]^2 \! \left(  \sum_{k =\ceil*{n-\frac{n^\nu}{2}} }^n \left[ \left[1-\tfrac{\beta}{(\ceil*{n-\frac{n^\nu}{2}} )^\nu} \right]^{n - \ceil*{n-\frac{n^\nu}{2}} } \right]^{\!2} \right) \right] \\
&= 
\liminf_{n \to \infty} \left[ \tfrac{\beta^2}{n^{\nu}} \left(n - \ceil*{n-\tfrac{n^\nu}{2}}+ 1\right) \left[ \left[1-\tfrac{\beta}{(\ceil*{n-\frac{n^\nu}{2}} )^\nu} \right]^{n - \ceil*{n-\frac{n^\nu}{2}} } \right]^{\!2} \right] .
\end{split}
\end{equation}
The fact that for all $n \in \N$ it holds that $\ceil*{n-\frac{n^\nu}{2}} \leq n-\frac{n^\nu}{2} + 1$ and the fact that for all $n \in \N$ it holds that $\ceil*{n-\frac{n^\nu}{2}} \geq n-\frac{n^\nu}{2} \geq n - \frac{n}{2} = \frac{n}{2}$ hence demonstrate that 
\begin{equation}
\begin{split} 
&\liminf_{n \to \infty}\left[ n^\nu \! \left( \sum_{k = 1}^n \left[ \tfrac{\beta}{k^\nu}\left( \prod_{l = k+1}^n (1-\tfrac{\beta}{l^\nu})\right) \right]^{\!2}\right) \right]  \\
&\geq
\liminf_{n \to \infty} \left[\tfrac{\beta^2}{n^{\nu}} \left(n - (n-\tfrac{n^\nu}{2} + 1)+ 1\right) \left[ \left[1-\tfrac{\beta}{(\frac{n}{2} )^\nu} \right]^{n - \left(n-\frac{n^\nu}{2}\right)} \right]^{\!2} \right] \\
&= 
\liminf_{n \to \infty} \left[\tfrac{\beta^2}{n^{\nu}} \! \left( \tfrac{n^\nu}{2} \! \left[1-\tfrac{2^\nu\beta}{n^\nu} \right]^{n^\nu} \right) \right]
= 
\liminf_{n \to \infty} \left(\tfrac{\beta^2}{2} \! \left[1-\tfrac{2^\nu\beta}{n^\nu} \right]^{n^\nu} \right) \\
&=
\tfrac{\beta^2}{2} \left[ \liminf_{n \to \infty} \left( \left[1-\tfrac{2^\nu\beta}{n^\nu} \right]^{n^\nu} \right)\right].
\end{split}
\end{equation}
Combining this with Lemma~\ref{fundamental_limit} (with
$a_l =  -2^\nu\beta$,
$n_l = l^\nu$ 
for $l \in \N$ in the notation of Lemma~\ref{fundamental_limit})
establishes that
\begin{equation}
\liminf_{n \to \infty} \left[n^\nu \!\left( \sum_{k = 1}^n \left[ \tfrac{\beta}{k^\nu}\left( \prod_{l = k+1}^n (1-\tfrac{\beta}{l^\nu})\right) \right]^{\!2} \right) \right] 
\geq 
\frac{\beta^2 \exp(-2^\nu\beta)}{2}.
\end{equation}
The proof of Lemma~\ref{aux_LB_2} is thus completed.
\end{proof}

\begin{prop}
\label{LB}
Assume Setting~\ref{setting} and assume that $\nu \leq 1$ and $\EXP{\norm{X_1-\EXp{X_1}}^2} > 0$.
Then there exists $C \in (0,\infty)$ such that for all $n \in \N$ it holds that
\begin{equation}
\big(\EXP{\norm{\Theta_n-\EXp{X_1}}^2}\big)^{\nicefrac{1}{2}} 
\geq
 C n^{-\nicefrac{\nu}{2}}.
\end{equation}
\end{prop}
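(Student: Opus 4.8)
The plan is to extract the claimed lower bound directly from the exact mean square error formula in item~\eqref{explicit_expressions:item4} of Proposition~\ref{explicit_expressions} together with the asymptotic estimate in Lemma~\ref{aux_LB_2}. That formula expresses $\EXP{\norm{\Theta_n-\EXp{X_1}}^2}$ as a sum of two nonnegative terms, the first being the squared propagated initial error and the second being the accumulated ``variance''. Since the first term is nonnegative, I would simply discard it to obtain, for every $n \in \N$,
\begin{equation}
\EXP{\norm{\Theta_n-\EXp{X_1}}^2} \geq \EXP{\norm{X_1-\EXp{X_1}}^2}\left[\sum_{k = 1}^n \left[\tfrac{\gamma\alpha}{k^\nu}\left(\prod_{l = k+1}^n (1-\tfrac{\gamma\alpha}{l^\nu})\right)\right]^2\right].
\end{equation}
The bracketed sum is precisely the quantity studied in Lemma~\ref{aux_LB_2} with the choice $\beta = \gamma\alpha$, which is admissible because $\gamma\alpha \in (0,\infty)$ and $\nu \in (0,1]$.

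Next I would apply Lemma~\ref{aux_LB_2} with $\beta = \gamma\alpha$ to conclude that
\begin{equation}
\liminf_{n \to \infty}\left[n^\nu \left(\sum_{k = 1}^n \left[\tfrac{\gamma\alpha}{k^\nu}\left(\prod_{l = k+1}^n (1-\tfrac{\gamma\alpha}{l^\nu})\right)\right]^2\right)\right] \geq \frac{(\gamma\alpha)^2 \exp(-2^\nu\gamma\alpha)}{2} > 0.
\end{equation}
Because this $\liminf$ is strictly positive, there exist a threshold $N \in \N$ and a constant $c \in (0,\infty)$ (for instance one may take $c$ to be $\EXP{\norm{X_1-\EXp{X_1}}^2}$ times half of the displayed $\liminf$ value, which is positive by the hypothesis that $\EXP{\norm{X_1-\EXp{X_1}}^2} > 0$) such that $\EXP{\norm{\Theta_n-\EXp{X_1}}^2} \geq c\, n^{-\nu}$ holds for all $n \in \N \cap [N,\infty)$.

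Finally, the $\liminf$ argument says nothing about the finitely many indices $n \in \{1,\ldots,N-1\}$, and here I would invoke Lemma~\ref{positivity}, whose hypothesis $\EXP{\norm{X_1-\EXp{X_1}}^2} > 0$ is exactly the assumption of the proposition, to guarantee $\EXP{\norm{\Theta_n-\EXp{X_1}}^2} > 0$ for every such $n$. Since there are only finitely many of them, the numbers $n^\nu\, \EXP{\norm{\Theta_n-\EXp{X_1}}^2}$ admit a strictly positive minimum over $\{1,\ldots,N-1\}$; combining this minimum with $c$ and setting $C$ equal to the square root of the smaller of the two produces a single constant $C \in (0,\infty)$ satisfying $\EXP{\norm{\Theta_n-\EXp{X_1}}^2} \geq C^2\, n^{-\nu}$ for all $n \in \N$, and taking square roots then yields the assertion. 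The only genuine subtlety is precisely this matching of the purely asymptotic estimate of Lemma~\ref{aux_LB_2} with a bound valid for every $n \in \N$: the conceptual work is carried entirely by Lemma~\ref{aux_LB_2}, so the remaining obstacle is merely the bookkeeping needed to patch the initial segment via the strict positivity furnished by Lemma~\ref{positivity}.
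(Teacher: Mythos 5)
Your proposal is correct and follows essentially the same route as the paper's own proof: discard the nonnegative initial-error term in item~\eqref{explicit_expressions:item4} of Proposition~\ref{explicit_expressions}, apply Lemma~\ref{aux_LB_2} with $\beta = \gamma\alpha$ to get the asymptotic bound (the paper likewise uses half the $\liminf$ value beyond some threshold $m$), and patch the finitely many initial indices via the strict positivity from Lemma~\ref{positivity} before taking square roots. No gaps; this matches the paper's argument step for step.
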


\begin{proof}[Proof of Proposition~\ref{LB}] 
First, observe that item~\eqref{explicit_expressions:item4} in Proposition~\ref{explicit_expressions} ensures that for all $n \in \N$ it holds that
\begin{equation}
\label{LB:eq1}
\begin{split}
\Exp{\norm{\Theta_n - \EXp{X_1}}^2} 
&=
\left[ \prod_{l = 1}^n (1-\tfrac{\gamma\alpha}{l^\nu})\right]^2 \norm{\xi-\EXp{X_1}}^2 \\
&\quad + 
\Exp{\norm{X_1 - \EXp{X_1}}^2}
\left[
\sum_{k = 1}^n 
\left[
\tfrac{\gamma\alpha}{k^\nu}\left( \prod_{l = k+1}^n (1-\tfrac{\gamma\alpha}{l^\nu})\right)
\right]^2
\right] \\
&\geq
\Exp{\norm{X_1 - \EXp{X_1}}^2}
\left[
\sum_{k = 1}^n 
\left[
\tfrac{\gamma\alpha}{k^\nu}\left( \prod_{l = k+1}^n (1-\tfrac{\gamma\alpha}{l^\nu})\right)
\right]^2
\right].
\end{split}
\end{equation}
Moreover, note that Lemma~\ref{aux_LB_2} (with $\beta = \gamma \alpha$, $\nu = \nu$ in the notation of Lemma~\ref{aux_LB_2}) implies that there exists $m \in \N$ such that for all $n \in \{m,m+1,\ldots \}$ it holds that
\begin{equation}
n^\nu \!
	\left(
	 	\sum_{k = 1}^n 
	 		\left[ 
	 			\tfrac{\gamma\alpha}{k^\nu}\left( \prod_{l = k+1}^n (1-\tfrac{\gamma\alpha}{l^\nu})\right) 
	 		\right]^{\!2} 
	 \right)
\geq 
\tfrac{1}{2} \big( \tfrac{(\gamma	\alpha)^2 \exp(-2^\nu\gamma\alpha)}{2} \big) 
= 
 \tfrac{(\gamma\alpha)^2\exp(-2^\nu\gamma\alpha)}{4}  . 
\end{equation}
Therefore, we obtain for all $n \in \{m,m+1,\ldots \}$ that
\begin{equation}
\sum_{k = 1}^n \left[ \tfrac{\gamma\alpha}{k^\nu}\left( \prod_{l = k+1}^n (1-\tfrac{\gamma\alpha}{l^\nu})\right) \right]^{\!2} 
\geq 
\left[ \tfrac{(\gamma\alpha)^2\exp(-2^\nu\gamma\alpha)}{4} \right]  n^{-\nu} . 
\end{equation}
This and (\ref{LB:eq1}) demonstrate that for all $n \in \{m,m+1,\ldots \}$ it holds that
\begin{equation}
\label{LB:eq2}
\begin{split}
\EXP{\norm{\Theta_n-\EXp{X_1}}^2} 
&\geq 
\EXP{\norm{X_{1}- \EXp{X_1}}^2}
\left[
\sum_{k = 1}^n 
\left[
\tfrac{\gamma\alpha}{k^\nu}
\left(
\prod_{l = k+1}^n (1-\tfrac{\gamma\alpha}{l^\nu})
\right)
\right]^{\!2}
\right] \\
&\geq 
\left[\EXP{\norm{X_{1}- \EXp{X_1}}^2} \! \left( \tfrac{(\gamma\alpha)^2\exp(-2^\nu\gamma\alpha)}{4} \right)\right] n^{-\nu}.
\end{split}
\end{equation}
Furthermore, observe that Lemma~\ref{positivity} and the hypothesis that $\EXP{\norm{X_1-\EXp{X_1}}^2} > 0$ prove that for all $n \in \N \cap (0,m)$ it holds that
\begin{equation}
\begin{split}
\EXP{\norm{\Theta_n-\EXp{X_1}}^2} > 0.
\end{split}
\end{equation}
Hence, we obtain for all $n \in \N \cap (0,m)$ that
\begin{equation}
\begin{split}
\EXP{\norm{\Theta_n-\EXp{X_1}}^2} 
&= 
\left[ \tfrac{\EXp{\norm{\Theta_n-\EXp{X_1}}^2}}{n^{-\nu}}\right] n^{-\nu} \\
&\geq 
\left[\min \! \left\{ \tfrac{\EXp{\norm{\Theta_k- \EXp{X_1}}^2}}{k^{-\nu}} \colon k  \in \N \cap (0,m) \right\} \right] n^{-\nu} > 0.
\end{split}
\end{equation}
Combining this, \eqref{LB:eq2}, and the hypothesis that $\EXP{\norm{X_1-\EXp{X_1}}^2} > 0$ assures that for all $n \in \N$ it holds that
\begin{equation}
\begin{split}
	&\EXP{\norm{\Theta_n-\EXp{X_1}}^2}\\
&\geq
	\bigg[ \! \min \! 
		\bigg( \! 
			\left\{ 
				\tfrac{\EXp{\norm{\Theta_k- \EXp{X_1}}^2}}{k^{-\nu}} \colon k \in \N \cap (0,m)
			\right\} \\
			&\quad \cup 
			\left\{  
				\left[\tfrac{(\gamma\alpha)^2\exp(-2^\nu\gamma\alpha)}{4} \right] \EXP{\norm{X_{1}- \EXp{X_1}}^2} 
			\right\} \! 
		\bigg) 
	\bigg]
	 n^{-\nu}
> 0.
\end{split}
\end{equation}
Therefore, we obtain for all $n \in \N$ that 
\begin{equation}
\begin{split}
	&\big(\EXP{\norm{\Theta_n-\EXp{X_1}}^2}\big)^{\nicefrac{1}{2}} \\
&\geq
	n^{-\nicefrac{\nu}{2}}
	\Bigg[ 
		\min \! 
		\bigg( \!
		 	\left\{ 
		 		\tfrac{\EXp{\norm{\Theta_k- \EXp{X_1}}^2}}{k^{-\nu}} \colon  k \in \N \cap (0,m)
		 	\right\} \\
			&\quad \cup 
			\left\{ 
				\left[\tfrac{(\gamma\alpha)^2\exp(-2^\nu\gamma\alpha)}{4} \right] \EXP{\norm{X_{1}- \EXp{X_1}}^2} 
			\right\} \!
		\bigg)
	\Bigg]^{\nicefrac{1}{2}} 
> 
0.
\end{split}
\end{equation}
The proof of Proposition~\ref{LB} is thus completed.
\end{proof}

\subsection{Refined lower errors estimates in the case of fast decaying learning rates}
\label{subsect:LB_fast_refined}
In this subsection we establish in Lemma~\ref{LB_nueq1_deterministic} below a lower error bound for the SGD process in \eqref{setting:eq2} in the case of fast decaying learning rates (corresponding to the case $\nu = 1$ in Setting~\ref{setting}). 
Combining this lower bound with the lower bound from Proposition~\ref{LB} (see Lemma~\ref{LB_nueq1_variance} below) allows us to establish the refined lower error bound in Proposition~\ref{LB_nueq1} below.

\subsubsection{An estimate for the natural logarithm}
\label{subsubsect:LB_log}

In Lemma~\ref{LB_log} below we recall an elementary and well-known property of the natural logarithm (see, e.g., \cite{WikipediaLBlog}). 
Lemma~\ref{LB_log} will be employed in our proof of Lemma~\ref{LB_nueq1_aux} which, in turn, will be used to prove Lemma~\ref{LB_nueq1_deterministic}.
For completeness we provide the proof of Lemma~\ref{LB_log} here.

\begin{lemma}
\label{LB_log}
It holds for all $x \in (0,\infty)$ that
\begin{equation}
\log(x) \geq \frac{(x-1)}{x}.
\end{equation}
\end{lemma}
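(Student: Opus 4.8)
The plan is to reduce the claim to a statement about the sign of a single smooth auxiliary function on $(0,\infty)$ and to pin down that sign via the first-derivative test. First I would introduce the function $g \colon (0,\infty) \to \R$ which satisfies for all $x \in (0,\infty)$ that
\begin{equation}
g(x) = \log(x) - \tfrac{(x-1)}{x} = \log(x) - 1 + \tfrac{1}{x},
\end{equation}
so that the assertion $\log(x) \geq \frac{(x-1)}{x}$ is equivalent to the inequality $g(x) \geq 0$ for all $x \in (0,\infty)$. The point of writing $g$ in the second form is that both summands are manifestly differentiable on $(0,\infty)$.

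Next I would compute the derivative and read off its sign. Since $\log \in C^\infty((0,\infty),\R)$ with $(\log)'(x) = \tfrac{1}{x}$ and $\tfrac{\mathrm{d}}{\mathrm{d}x}\tfrac{1}{x} = -\tfrac{1}{x^2}$, I obtain for all $x \in (0,\infty)$ that
\begin{equation}
g'(x) = \tfrac{1}{x} - \tfrac{1}{x^2} = \tfrac{x-1}{x^2}.
\end{equation}
The sign of $g'$ is then governed entirely by the numerator $x-1$, since $x^2 > 0$: it holds that $g'(x) < 0$ for $x \in (0,1)$, that $g'(1) = 0$, and that $g'(x) > 0$ for $x \in (1,\infty)$. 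By the mean value theorem (or monotonicity of functions with signed derivative) this shows that $g$ is strictly decreasing on $(0,1]$ and strictly increasing on $[1,\infty)$, so that $g$ attains its global minimum over $(0,\infty)$ at the point $x = 1$.

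Finally I would evaluate this minimum: $g(1) = \log(1) - 1 + 1 = 0$, whence $g(x) \geq g(1) = 0$ for all $x \in (0,\infty)$, which is exactly the desired inequality. There is no genuine obstacle here; the only points needing care are the routine justification that a function with nonpositive derivative on $(0,1)$ and nonnegative derivative on $(1,\infty)$ is minimized at $x=1$, and the differentiability of $\log$ and of $x \mapsto \tfrac{1}{x}$, both of which are standard. I note that one could equivalently substitute $y = \tfrac{1}{x}$ to recast the claim as the classical bound $\log(y) \leq y - 1$ for $y \in (0,\infty)$, but the direct argument above is self-contained and avoids invoking that bound as a black box.
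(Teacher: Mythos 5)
Your proof is correct and takes essentially the same approach as the paper: both introduce the auxiliary function $x \mapsto \log(x) - \tfrac{(x-1)}{x}$, compute its derivative $\tfrac{(x-1)}{x^2}$, and conclude from the sign change at $x=1$ that the function attains its global minimum value $0$ there. The only cosmetic difference is that you invoke the mean value theorem for the monotonicity conclusion, whereas the paper integrates the derivative via the fundamental theorem of calculus; the substance is identical.
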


\begin{proof}[Proof of Lemma~\ref{LB_log}]
Throughout this proof let $f \colon (0,\infty) \to \R$ be the function which satisfies for all $x \in (0,\infty)$ that
\begin{equation}
f(x) = \log(x) - \frac{(x-1)}{x} = \log(x) - 1 + \frac{1}{x} = \log(x) - 1 + x^{-1} .
\end{equation}
Note that 
\begin{equation}
\label{LB_log:eq1}
f(1) = \log(1) - \frac{(1-1)}{1} = \log(1) = 0.
\end{equation}
Moreover, observe that for all $x \in (0,\infty)$ it holds that
\begin{equation}
\label{LB_log:eq2}
f'(x) = \frac{1}{x} - \frac{1}{x^2} =\frac{(x - 1)}{x^2} .
\end{equation}
This ensures that for all $x \in [1,\infty)$ it holds that $f'(x) \geq 0$.
The fundamental theorem of calculus and (\ref{LB_log:eq1}) hence imply that for all $x \in [1,\infty)$ it holds that
\begin{equation}
\label{LB_log:eq3}
f(x) = f(1) + \int_1^x f'(t) \, \mathrm{d}t \geq f(1) = 0.
\end{equation}
Moreover, note that (\ref{LB_log:eq2}) assures that for all $x \in (0,1]$ it holds that $f'(x) \leq 0$.
The fundamental theorem of calculus and (\ref{LB_log:eq1}) therefore ensure that for all $x \in (0,1]$ it holds that
\begin{equation}
0 = f(1) = f(x) + \int_x^1 f'(t) \, \mathrm{d}t \leq f(x).
\end{equation}
Combining this with (\ref{LB_log:eq3}) proves that for all $x \in (0,\infty)$ it holds that
\begin{equation}
\log(x) - \frac{(x-1)}{x} = f(x) \geq 0.
\end{equation}
Therefore, we obtain for all $x \in (0,\infty)$ that
\begin{equation}
\log(x) \geq \frac{(x-1)}{x}.
\end{equation}
The proof of Lemma~\ref{LB_log} is thus completed.
\end{proof}

\subsubsection{Errors due to the deterministic gradient descent dynamic}
\label{subsubsect:LB_deterministic}

\begin{lemma}
\label{LB_nueq1_aux}
Let $m \in \N$, $\beta \in (0,\infty) \backslash \{m,m+1,m+2,\ldots \}$. 
Then for every $\varepsilon \in (0,\infty)$ there exists $C \in (0,\infty)$ such that for all $n \in \N \cap [m,\infty)$ it holds that
\begin{equation}
\label{LB_nueq1_aux:concl1}
 \left[ \prod_{l = m}^n \Abs{1-\tfrac{\beta}{l}} \right]
 \geq
 C n^{-(\beta + \varepsilon)}. 
\end{equation}
\end{lemma}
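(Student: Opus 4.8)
The plan is to take logarithms and reduce the product to a harmonic-type sum that is controlled by $\log n$. First I would record that the hypothesis $\beta \in (0,\infty)\setminus\{m,m+1,m+2,\ldots\}$ guarantees $l \neq \beta$ for every integer $l \geq m$, so that each factor satisfies $\Abs{1-\tfrac{\beta}{l}} > 0$ and the product is strictly positive for every $n$. I would then fix an integer threshold $L \in \N$ with $L \geq m$ and $L > \beta$ (so that $0 < 1-\tfrac{\beta}{l} < 1$ for all integers $l > L$) and split
\[
\prod_{l=m}^n \Abs{1-\tfrac{\beta}{l}} = \left[\prod_{l=m}^{L} \Abs{1-\tfrac{\beta}{l}}\right]\left[\prod_{l=L+1}^n \left(1-\tfrac{\beta}{l}\right)\right],
\]
where the first bracket is a fixed positive constant $c_1 > 0$ and the second product (now with strictly positive factors) is the quantity to be estimated for $n > L$. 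For the finitely many $n \in \{m,\ldots,L\}$ the claimed inequality is trivial: each value $\prod_{l=m}^n\Abs{1-\tfrac{\beta}{l}}$ is a positive constant and $n^{-(\beta+\varepsilon)}$ stays bounded below by a positive constant on this finite range, so the estimate holds after shrinking $C$.

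For $n > L$ I would pass to logarithms and invoke Lemma~\ref{LB_log} with $x = 1-\tfrac{\beta}{l} = \tfrac{l-\beta}{l} \in (0,\infty)$, which yields
\[
\log\!\left(1-\tfrac{\beta}{l}\right) \geq \frac{(1-\tfrac{\beta}{l})-1}{1-\tfrac{\beta}{l}} = \frac{-\beta}{l-\beta}
\]
for every $l > L$. Summing over $l \in \{L+1,\ldots,n\}$ gives the lower bound $\sum_{l=L+1}^n \log(1-\tfrac{\beta}{l}) \geq -\beta\sum_{l=L+1}^n \tfrac{1}{l-\beta}$, so it remains to bound the tail sum $\sum_{l=L+1}^n \tfrac{1}{l-\beta}$ from above by essentially $\log n$.

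Here is where the slack $\varepsilon$ enters. Since $\tfrac{\beta}{l-\beta} = \tfrac{\beta}{l}\big(1-\tfrac{\beta}{l}\big)^{-1} \to \tfrac{\beta}{l}$ as $l \to \infty$, there is an integer $L_\varepsilon \geq L+1$ such that $\tfrac{\beta}{l-\beta} \leq \tfrac{\beta+\varepsilon}{l}$ for all $l \geq L_\varepsilon$; the finitely many remaining terms $l \in \{L+1,\ldots,L_\varepsilon-1\}$ contribute only a further additive constant. Combining this with the elementary bound $\sum_{l=1}^n \tfrac{1}{l} \leq 1 + \log n$ produces a constant $c_2 \in \R$ with $\beta\sum_{l=L+1}^n \tfrac{1}{l-\beta} \leq (\beta+\varepsilon)\log n + c_2$. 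Exponentiating gives $\prod_{l=L+1}^n(1-\tfrac{\beta}{l}) \geq e^{-c_2} n^{-(\beta+\varepsilon)}$, and multiplying by the prefactor $c_1$ and merging the two regimes $n \leq L$ and $n > L$ yields the assertion with a suitable $C$ (obtained by taking a minimum of $c_1 e^{-c_2}$ with the constant from the finite range). I expect essentially no serious obstacle: the only points requiring care are the sign bookkeeping in the initial segment---resolved by the hypothesis on $\beta$---and the asymptotic comparison $\tfrac{\beta}{l-\beta}\leq\tfrac{\beta+\varepsilon}{l}$ that turns the shifted harmonic sum into the desired $(\beta+\varepsilon)\log n$; in fact the same argument with a direct integral comparison would even yield the sharper exponent $-\beta$, so the $\varepsilon$ here is merely a convenient buffer.
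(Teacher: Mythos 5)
Your proposal is correct and follows essentially the same route as the paper's proof: split the product at a threshold $L>\beta$ so the remaining factors lie in $(0,1)$, apply Lemma~\ref{LB_log} to bound $\log(1-\tfrac{\beta}{l})$ from below, compare the resulting sum with the harmonic sum $\sum_{l}\tfrac{1}{l}\leq\log n$, exponentiate, and absorb the finitely many initial indices into the constant. The only cosmetic difference is how the $\varepsilon$-slack is deployed (the paper builds it into the choice of $L$ via $\tfrac{\beta}{1-\beta/L}\leq\beta+\varepsilon$, while you introduce a second threshold $L_\varepsilon$), and your closing observation that a direct integral comparison of $\sum_l \tfrac{1}{l-\beta}$ would yield the sharper exponent $-\beta$ is also correct.
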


\begin{proof}[Proof of Lemma~\ref{LB_nueq1_aux}]
Throughout this proof let $\varepsilon \in (0,\infty)$, let $L \in \N \cap (\max\{m, \beta \},\infty)$ satisfy that
\begin{equation}
\label{LB_nueq1_aux:eq1}
\frac{\beta}{(1-\tfrac{\beta }{L})} \leq (\beta + \varepsilon),
\end{equation}
and let $C \in [0,\infty)$ be given by
\begin{equation}
\label{LB_nueq1_aux:eq2}
C 
= 
\min \! 
\left(
	\left\{ 
		\tfrac{\left[ \prod_{l = m}^k \abs{1-\frac{\beta}{l}} \right]}{k^{-(\beta + \varepsilon)}} \colon k \in \N \cap [m,L]
	\right\} \cup 
	\left\{ 
		{\textstyle \prod\limits_{l = m}^L \Abs{1-\tfrac{\beta}{l}} }
	\right\}
\right).
\end{equation}
Note that the fact that $\beta \notin \{m,m+1,m+2, \ldots \} = \N \cap [m,\infty)$ ensures that for all $ l \in \N \cap [m,\infty)$ it holds that
\begin{equation}
\Abs{1-\tfrac{\beta }{l}} > 0.
\end{equation}
This and \eqref{LB_nueq1_aux:eq2} establish that $C > 0$.
Moreover, observe that the fact that $L > \beta$ assures that for all $ l \in \N \cap [L,\infty)$ it holds that
\begin{equation}
0 = 1 - 1 < 1 - \tfrac{\beta}{L} \leq 1 - \tfrac{\beta}{l} < 1-0 = 1.
\end{equation}
Hence, we obtain that for all $l \in \N \cap [L,\infty)$ it holds that
\begin{equation}
\label{LB_nueq1_aux:eq3}
\left(1-\tfrac{\beta }{l}\right) \in (0,1).
\end{equation}
Lemma~\ref{LB_log} and \eqref{LB_nueq1_aux:eq1} therefore assure that for all $n \in \N \cap (L,\infty)$ it holds that
\begin{equation}
\begin{split}
&\log \! \left(  \prod_{l = L+1}^n \Abs{1-\tfrac{\beta}{l}}\right)
=
\sum_{l = L+1}^n  \log \! \left(   1-\tfrac{\beta}{l} \right) \\
&\geq
 \sum_{l = L+1}^n \left( \frac{\left(1-\tfrac{\beta}{l} \right) - 1}{\left(1-\tfrac{\beta}{l}\right)} \right) 
=
- \left[ \sum_{l = L+1}^n  \left(\frac{1}{l} \left[\frac{ \beta}{\left(1-\tfrac{\beta}{l}\right)} \right] \right) \right]  \\
&\geq
- \left[ \sum_{l = L+1}^n  \left(\frac{1}{l} \left[\frac{ \beta}{\left(1-\tfrac{\beta}{L}\right)} \right] \right) \right]  
\geq
- \left[\sum_{l = L+1}^n  \frac{(\beta + \varepsilon)}{l} \right]\\
&\geq 
- (\beta + \varepsilon) \left[\sum_{l = 2}^n  \frac{1}{l} \right].
\end{split}
\end{equation}
The fact that for all $n \in \N$ it holds that
\begin{equation}
\sum_{l = 2}^n  \frac{1}{l} 
= 
\sum_{l = 2}^n  \left[ \int_{l-1}^l \frac{1}{l} \, \mathrm{d}x \right]
\leq  
\sum_{l = 2}^n \left[ \int_{l-1}^l \frac{1}{x} \, \mathrm{d}x \right]
= 
\int_{1}^n \frac{1}{x} \, \mathrm{d}x
= 
\log(n)
\end{equation}
hence ensures that for all $n \in \N \cap (L,\infty)$ it holds that
\begin{equation}
\begin{split}
 \prod_{l = L+1}^n \Abs{1-\tfrac{\beta}{l}}
 &= 
\exp \! \left( \log \! \left( \prod_{l = L+1}^n \Abs{1-\tfrac{\beta}{l}} \right)\right) \\
&\geq
\exp \! \left( -(\beta + \varepsilon) \left[ \sum_{l = 2}^n  \frac{1}{l} \right] \right) \\
&\geq 
\exp \! \big( - (\beta + \varepsilon)\log(n) \big) 
= 
n^{- (\beta + \varepsilon)}.
\end{split}
\end{equation}
This and \eqref{LB_nueq1_aux:eq2} demonstrate that for all $n \in \N \cap (L,\infty)$ it holds that
\begin{equation}
\label{LB_nueq1_aux:eq4}
\begin{split}
	\prod_{l = m}^n \Abs{1-\tfrac{\beta}{l}}
&=
	\left[\prod_{l = m}^L \Abs{1-\tfrac{\beta}{l}}\right] \left[ \prod_{l = L+1}^n \Abs{1-\tfrac{\beta}{l}} \right] \\
&\geq 
	\left[\prod_{l = m}^L \Abs{1-\tfrac{\beta}{l}}\right] n^{- (\beta + \varepsilon)} 
\geq 
	C n^{- (\beta + \varepsilon)} .
\end{split}
\end{equation}
Moreover, note that \eqref{LB_nueq1_aux:eq2} implies that for all $n \in \N \cap [m,L]$ it holds that
\begin{equation}
\begin{split}
 \prod_{l = m}^n \Abs{1-\tfrac{\beta}{l}}
 =
 \left[
 	\tfrac{\left[ \prod_{l = m}^n \abs{1-\frac{\beta}{l}} \right]}{n^{-(\beta + \varepsilon)}} 
 \right]
 n^{-(\beta + \varepsilon)}
 \geq
 C n^{-(\beta + \varepsilon)}.
\end{split}
\end{equation} 
Combining this and \eqref{LB_nueq1_aux:eq4} establishes that for all $n \in \N \cap [m,\infty)$ it holds that
\begin{equation}
\begin{split}
 \prod_{l = m}^n \Abs{1-\tfrac{\beta}{l}}
 \geq
 C n^{- (\beta + \varepsilon)}.
\end{split}
\end{equation}
The fact that $C > 0$ therefore establishes \eqref{LB_nueq1_aux:concl1}.
The proof of Lemma~\ref{LB_nueq1_aux} is thus completed.
\end{proof}


\begin{lemma}[Lower bound for deterministic gradient descent]
\label{LB_deterministic}
Let $d \in \N$, $\kappa \in \R$, $\vartheta \in \R^d$, $\xi \in \R^d \backslash \{\vartheta \}$, $\alpha \in (0,\infty)$, $\gamma \in (0,\infty) \backslash \{\frac{1}{\alpha}, \frac{2}{\alpha}, \frac{3}{\alpha}, \ldots \}$, 
let $\lrnorm{\cdot} \colon \R^d \to [0,\infty)$ be the $d$-dimensional Euclidean norm, 
let $f \colon \R^d \to \R$ be the function which satisfies for all $\theta  \in \R^d$ that
\begin{equation}
\label{LB_deterministic:ass1}
f(\theta) = \tfrac{\alpha}{2} \norm{\theta - \vartheta}^2 + \kappa,
\end{equation}
and let $\Theta \colon \N_0 \times \Omega \to \R^d$ be the function which satisfies for all $n \in \N$ that 
\begin{equation}
\Theta_0 = \xi \qandq \Theta_n = \Theta_{n-1} - \tfrac{\gamma}{n} (\nabla f) (\Theta_{n-1}).
\end{equation}
Then
\begin{enumerate}[(i)]
\item \label{LB_deterministic:item1}
it holds that
$
\{\theta \in \R^d  \colon  f(\theta) = \inf\nolimits_{w \in \R^d} f(w)  \} = \{ \vartheta \}
$
and

\item \label{LB_deterministic:item2}
for every $\varepsilon \in (0,\infty)$ there exists $C \in (0,\infty)$ such that for all $n \in \N$ it holds that
\begin{equation}
\norm{\Theta_n -\vartheta} \geq C n^{-(\gamma \alpha + \varepsilon)}.
\end{equation}
\end{enumerate}
\end{lemma}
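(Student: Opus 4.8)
The plan is to reduce item~\eqref{LB_deterministic:item2} to a lower bound on a product of factors that is already supplied by Lemma~\ref{LB_nueq1_aux}. Item~\eqref{LB_deterministic:item1} is immediate: since $\alpha \in (0,\infty)$, the function $f(\theta) = \tfrac{\alpha}{2}\norm{\theta-\vartheta}^2 + \kappa$ satisfies $f(\theta) \geq \kappa = f(\vartheta)$ for all $\theta \in \R^d$, with equality if and only if $\norm{\theta-\vartheta} = 0$, i.e.\ $\theta = \vartheta$; this yields $\{\theta \in \R^d \colon f(\theta) = \inf_{w \in \R^d} f(w)\} = \{\vartheta\}$.

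First I would compute the gradient: applying Lemma~\ref{der_of_norm} to the map $\theta \mapsto \norm{\theta-\vartheta}^2$ and using that $\kappa$ is constant gives $(\nabla f)(\theta) = \tfrac{\alpha}{2}(2(\theta-\vartheta)) = \alpha(\theta-\vartheta)$ for all $\theta \in \R^d$. Substituting this into the defining recursion shows, for all $n \in \N$, that $\Theta_n - \vartheta = (\Theta_{n-1} - \vartheta) - \tfrac{\gamma\alpha}{n}(\Theta_{n-1} - \vartheta) = (1 - \tfrac{\gamma\alpha}{n})(\Theta_{n-1} - \vartheta)$. A routine induction (or Lemma~\ref{recursive_eq} with $\beta_n = 0$, $\alpha_n = 1 - \tfrac{\gamma\alpha}{n}$, and $e_n = \Theta_n - \vartheta$) then gives the closed form $\Theta_n - \vartheta = [\prod_{l=1}^n (1 - \tfrac{\gamma\alpha}{l})](\xi - \vartheta)$ for all $n \in \N_0$. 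Taking Euclidean norms therefore yields $\norm{\Theta_n - \vartheta} = [\prod_{l=1}^n \Abs{1 - \tfrac{\gamma\alpha}{l}}]\norm{\xi - \vartheta}$ for all $n \in \N_0$.

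The final step is to invoke Lemma~\ref{LB_nueq1_aux} with $m = 1$ and $\beta = \gamma\alpha$. The hypothesis $\gamma \in (0,\infty)\setminus\{\tfrac{1}{\alpha}, \tfrac{2}{\alpha}, \tfrac{3}{\alpha},\ldots\}$ is precisely what guarantees $\gamma\alpha \notin \{1, 2, 3, \ldots\} = \N \cap [1,\infty)$, so the non-integrality assumption of Lemma~\ref{LB_nueq1_aux} is met. That lemma then provides, for every $\varepsilon \in (0,\infty)$, a constant $\widetilde{C} \in (0,\infty)$ with $\prod_{l=1}^n \Abs{1 - \tfrac{\gamma\alpha}{l}} \geq \widetilde{C}\, n^{-(\gamma\alpha + \varepsilon)}$ for all $n \in \N$. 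Since $\xi \neq \vartheta$ forces $\norm{\xi - \vartheta} > 0$, setting $C = \widetilde{C}\,\norm{\xi - \vartheta} \in (0,\infty)$ gives $\norm{\Theta_n - \vartheta} \geq C n^{-(\gamma\alpha + \varepsilon)}$ for all $n \in \N$, which is the desired bound.

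I do not expect a genuine obstacle here: all of the delicate analysis, in particular the lower bound for the product of the factors $\Abs{1 - \tfrac{\gamma\alpha}{l}}$ and the care needed for the small indices $l$ where these factors may exceed $1$ or be negative, has been isolated into Lemma~\ref{LB_nueq1_aux} via the logarithmic estimate of Lemma~\ref{LB_log}. The only points demanding attention are verifying the non-integrality of $\gamma\alpha$ so that Lemma~\ref{LB_nueq1_aux} is applicable, and ensuring the final constant remains strictly positive, which is exactly where the assumption $\xi \in \R^d \setminus \{\vartheta\}$ is used.
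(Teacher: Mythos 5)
Your proposal is correct and follows essentially the same route as the paper's proof: compute $(\nabla f)(\theta)=\alpha(\theta-\vartheta)$ via Lemma~\ref{der_of_norm}, derive the closed form $\Theta_n-\vartheta=\big[\prod_{l=1}^n(1-\tfrac{\gamma\alpha}{l})\big](\xi-\vartheta)$ by induction, and then invoke Lemma~\ref{LB_nueq1_aux} with $m=1$, $\beta=\gamma\alpha$, using $\gamma\alpha\notin\N$ and $\norm{\xi-\vartheta}>0$ to conclude. The only cosmetic differences are that you spell out the (one-line) argument for item~(i) and mention Lemma~\ref{recursive_eq} as an alternative to the direct induction, neither of which changes the substance.
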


\begin{proof}[Proof of Lemma~\ref{LB_deterministic}]
Throughout this proof let $\varepsilon \in (0,\infty)$.
Observe that \eqref{LB_deterministic:ass1} proves item~\eqref{LB_deterministic:item1}.
It thus remains to prove item~\eqref{LB_deterministic:item2}.
For this note that Lemma~\ref{der_of_norm} and \eqref{LB_deterministic:ass1} ensure that for all $\theta \in \R^d$ it holds that
\begin{equation}
(\nabla f)(\theta) 
= 
\tfrac{\alpha}{2}( 2(\theta - \vartheta) )
=
\alpha(\theta - \vartheta).
\end{equation}
Therefore, we obtain for all $n \in \N$ that
\begin{equation}
\begin{split}
\Theta_n -\vartheta
&= 
\Theta_{n-1} - \tfrac{\gamma}{n} (\nabla f) (\Theta_{n-1}) - \vartheta \\
&=
\Theta_{n-1} -\vartheta - \tfrac{\gamma\alpha}{n}  (\Theta_{n-1} - \vartheta)\\
&=
(1- \tfrac{\gamma\alpha}{n})(\Theta_{n-1} -\vartheta).
\end{split}
\end{equation}
Induction hence proves that for all $n \in \N$ it holds that
\begin{equation}
\Theta_n -\vartheta 
= 
\left[\prod_{l = 1}^n (1- \tfrac{\gamma\alpha}{l})\right](\Theta_{0} -\vartheta) 
= 
\left[\prod_{l = 1}^n (1- \tfrac{\gamma\alpha}{l})\right](\xi -\vartheta).
\end{equation}
This assures that for all $n \in \N$ it holds that
\begin{equation}
\label{LB_deterministic:eq1}
\norm{\Theta_n -\vartheta }
=
\left[\prod_{l = 1}^n \Abs{1- \tfrac{\gamma\alpha}{l}}\right] \norm{\xi -\vartheta}.
\end{equation}
Next observe that Lemma~\ref{LB_nueq1_aux}
(with
$m = 1$,
$\beta = \gamma \alpha$
in the notation of Lemma~\ref{LB_nueq1_aux})
and the fact that $\gamma \alpha \notin \N$ imply that there exists $C \in (0,\infty)$ such that for all $n \in \N$ it holds that
\begin{equation}
\left[\prod_{l = 1}^n \Abs{1- \tfrac{\gamma\alpha}{l}}\right]  
\geq
C n^{- (\gamma\alpha + \varepsilon)}.
\end{equation}
Combining this with \eqref{LB_deterministic:eq1} demonstrates that for all $n \in \N$ it holds that
\begin{equation}
	\norm{\Theta_n -\vartheta }
\geq
	\big[ C  n^{- (\gamma\alpha + \varepsilon)} \big] \norm{\xi -\vartheta} 
=
	\big[ C \norm{\xi -\vartheta} \big] n^{- (\gamma\alpha + \varepsilon)} .
\end{equation}
The hypothesis that $\xi \neq \vartheta$ hence establishes item~\eqref{LB_deterministic:item2}.
The proof of Lemma~\ref{LB_deterministic} is thus completed.
\end{proof}

\begin{lemma}
\label{LB_nueq1_deterministic}
Assume Setting~\ref{setting} and assume that $\nu =1$ and $\EXp{\norm{X_1-\EXp{X_1}}^2}  > 0$.
Then for every $\varepsilon \in (0,\infty)$ there exists $C \in (0,\infty)$ such that for all $n \in \N$ it holds that
\begin{equation}
\label{LB_nueq1_deterministic:concl1}
\big(\EXP{\norm{\Theta_n-\EXp{X_1}}^2}\big)^{\nicefrac{1}{2}} 
\geq
 C n^{-(\gamma \alpha + \varepsilon)}.
\end{equation}
\end{lemma}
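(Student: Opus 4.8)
The plan is to bound the mean square error from below by retaining a single, carefully chosen summand of the variance term in the explicit representation of item~\eqref{explicit_expressions:item4} in Proposition~\ref{explicit_expressions}. Specializing that formula to $\nu = 1$ and discarding the nonnegative bias term, I first obtain for all $n \in \N$ that
\begin{equation}
\EXP{\norm{\Theta_n - \EXp{X_1}}^2}
\geq
\EXP{\norm{X_1 - \EXp{X_1}}^2}
\left[
\sum_{k = 1}^n
\left[
\tfrac{\gamma\alpha}{k}\left( \prod_{l = k+1}^n (1-\tfrac{\gamma\alpha}{l})\right)
\right]^2
\right].
\end{equation}
Since the hypothesis $\EXP{\norm{X_1 - \EXp{X_1}}^2} > 0$ makes the prefactor strictly positive, it suffices to show that the bracketed sum decays, up to a multiplicative constant, no faster than $n^{-2(\gamma\alpha + \varepsilon)}$. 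Intuitively, the rate $n^{-\gamma\alpha}$ of the deterministic decay factor $\prod_{l}(1-\tfrac{\gamma\alpha}{l})$ limits how fast the noise injected at an early step can be damped, and this is exactly the phenomenon quantified in Lemma~\ref{LB_nueq1_aux}.

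The main difficulty is that when $\gamma\alpha \in \{2,3,\ldots\}$ the factor $1 - \tfrac{\gamma\alpha}{\gamma\alpha} = 0$ occurs in every product whose index range contains $\gamma\alpha$, so in particular the naive $k=1$ summand may vanish identically. To circumvent this I would fix once and for all an integer $m \in \N \cap (\gamma\alpha,\infty)$ and, for $n \geq m$, retain only the summand with $k = m$, which gives
\begin{equation}
\sum_{k = 1}^n
\left[
\tfrac{\gamma\alpha}{k}\left( \prod_{l = k+1}^n (1-\tfrac{\gamma\alpha}{l})\right)
\right]^2
\geq
\tfrac{(\gamma\alpha)^2}{m^2}
\left[ \prod_{l = m+1}^n \Abs{1-\tfrac{\gamma\alpha}{l}} \right]^2.
\end{equation}
For every index $l \geq m+1 > \gamma\alpha$ the factor $1-\tfrac{\gamma\alpha}{l}$ lies in $(0,1)$, so no zero ever enters this product. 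Applying Lemma~\ref{LB_nueq1_aux} with starting index $m+1$ and with $\gamma\alpha$ in the role of $\beta$ — admissible because $m+1 > \gamma\alpha$ forces $\gamma\alpha \notin \{m+1,m+2,\ldots\}$ — then furnishes, for the prescribed $\varepsilon$, a constant $\tilde C \in (0,\infty)$ with $\prod_{l = m+1}^n \Abs{1-\tfrac{\gamma\alpha}{l}} \geq \tilde C\, n^{-(\gamma\alpha+\varepsilon)}$ for all $n \in \N \cap [m+1,\infty)$. Chaining the three displays yields, for all sufficiently large $n$, a lower bound of the form $c\, n^{-2(\gamma\alpha+\varepsilon)}$ for $\EXP{\norm{\Theta_n - \EXp{X_1}}^2}$ with some $c \in (0,\infty)$.

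Finally, I would dispose of the finitely many remaining indices $n \in \{1,\ldots,m\}$ by invoking Lemma~\ref{positivity}, which (again using $\EXP{\norm{X_1-\EXp{X_1}}^2} > 0$) guarantees $\EXP{\norm{\Theta_n - \EXp{X_1}}^2} > 0$; writing each such term as $\big[\EXP{\norm{\Theta_n - \EXp{X_1}}^2}/n^{-2(\gamma\alpha+\varepsilon)}\big]\, n^{-2(\gamma\alpha+\varepsilon)}$ and taking the minimum over this finite set absorbs them into the constant. Combining the two constants and extracting a square root then produces \eqref{LB_nueq1_deterministic:concl1}. The only genuine obstacle is the one flagged above — guaranteeing that the retained product never collapses to zero for integer values of $\gamma\alpha$ — which the shift to the starting index $m+1 > \gamma\alpha$ resolves; the rest is routine manipulation of nonnegative quantities together with the cited lemmas.
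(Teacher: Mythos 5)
Your proposal is correct and follows essentially the same route as the paper's own proof: discard the bias term in item~\eqref{explicit_expressions:item4} of Proposition~\ref{explicit_expressions}, retain only the $k=m$ summand with $m$ chosen so that $m+1>\gamma\alpha$ (the paper takes $m\in\N\cap(\gamma\alpha-1,\infty)$, you take $m\in\N\cap(\gamma\alpha,\infty)$, an immaterial difference), apply Lemma~\ref{LB_nueq1_aux} with starting index $m+1$ and $\beta=\gamma\alpha$, and absorb the finitely many initial indices via Lemma~\ref{positivity}. The integer-$\gamma\alpha$ pitfall you flag, and your resolution of it by shifting the product's starting index past $\gamma\alpha$, is exactly the paper's device as well.
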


\begin{proof}[Proof of Lemma~\ref{LB_nueq1_deterministic}]
Throughout this proof 
let $\varepsilon \in (0,\infty)$, let $m \in \N \cap (\gamma\alpha -1 ,\infty)$, and let $\mathcal{M} \in [0,\infty)$ be given by
\begin{equation}
\mathcal{M} 
=
\min \! \left\{ \tfrac{\left(\EXp{\norm{\Theta_k- \EXp{X_1}}^2}\right)^{ \! \nicefrac{1}{2}}}{k^{- (\gamma\alpha + \varepsilon)}} \colon k \in \{1,2,\ldots,m \}\right\}.
\end{equation}
Observe that Lemma~\ref{positivity} and the hypothesis that $\EXP{\norm{X_1-\EXp{X_1}}^2}>0$ assure that for all $n \in \{1,2,\ldots,m \}$ it holds that
\begin{equation}
\begin{split}
\EXP{\norm{\Theta_n-\EXp{X_1}}^2} > 0.
\end{split}
\end{equation}
This ensures that $\mathcal{M}  > 0$.
Next note that item~\eqref{explicit_expressions:item4} in Proposition~\ref{explicit_expressions} assures that for all $n \in \N \cap (m,\infty)$ it holds that
\begin{equation}
\label{LB_nueq1_deterministic:eq1}
\begin{split}
&\Exp{\norm{\Theta_n - \EXp{X_1}}^2} \\
&=
\left[ \prod_{l = 1}^n (1-\tfrac{\gamma\alpha}{l})\right]^2 \! \norm{\xi-\EXp{X_1}}^2  \\
&\quad +
\Exp{\norm{X_1 - \EXp{X_1}}^2}
\left[
\sum_{k = 1}^n 
\left[
\tfrac{\gamma\alpha}{k}\left( \prod_{l = k+1}^n (1-\tfrac{\gamma\alpha}{l})\right)
\right]^2
\right] \\
&\geq
\Exp{\norm{X_1 - \EXp{X_1}}^2} 
\left[
	\tfrac{\gamma\alpha}{m}\left( \prod_{l = m+1}^n (1-\tfrac{\gamma\alpha}{l})\right)
\right]^2 \\
&=
(\tfrac{\gamma\alpha}{m})^2 \,
\Exp{\norm{X_1 - \EXp{X_1}}^2} 
\left[
	\prod_{l = m+1}^n \Abs{1-\tfrac{\gamma\alpha}{l}}
\right]^2.
\end{split}
\end{equation}
Moreover, observe that the fact that $m +1> \gamma\alpha$ ensures that $\gamma \alpha \notin \{m+1, m+2, \ldots\}$.
Lemma~\ref{LB_nueq1_aux} 
(with 
$m = m+1$, 
$\beta = \gamma\alpha$ 
in the notation of Lemma~\ref{LB_nueq1_aux})
therefore demonstrates that there exists $C \in (0,\infty)$ such that for all $n \in \N \cap [m+1,\infty) = \N \cap (m,\infty)$ it holds that
\begin{equation}
 \left[ \prod_{l = m+1}^n \Abs{1-\tfrac{\gamma\alpha}{l}} \right]
 \geq
 C n^{-(\gamma\alpha + \varepsilon)}. 
\end{equation}
Combining this with \eqref{LB_nueq1_deterministic:eq1} proves that for all $n \in \N \cap (m,\infty)$ it holds that
\begin{equation}
\label{LB_nueq1_deterministic:eq2}
\begin{split}
\big(\EXP{\norm{\Theta_n-\EXp{X_1}}^2}\big)^{\nicefrac{1}{2}}  
&\geq
\left[\tfrac{\gamma\alpha\left(\EXp{\norm{X_1 - \EXp{X_1}}^2} \right)^{\!\nicefrac{1}{2}} }{m} \right]
\left[
	\prod_{l = m+1}^n \Abs{1-\tfrac{\gamma\alpha}{l}}
\right] \\
&\geq
\left[\tfrac{\gamma\alpha C\left(\EXp{\norm{X_1 - \EXp{X_1}}^2} \right)^{\!\nicefrac{1}{2}} }{m} \right]
n^{-(\gamma\alpha + \varepsilon)}.
\end{split}
\end{equation}
In addition, note that for all $n \in \{1,2,\ldots,m\}$ it holds that
\begin{equation}
\label{LB_nueq1_deterministic:eq3}
\begin{split}
\big(\EXP{\norm{\Theta_n-\EXp{X_1}}^2}\big)^{\nicefrac{1}{2}}  
=
\left[\tfrac{\left(\EXp{\norm{\Theta_n - \EXp{X_1}}^2} \right)^{\!\nicefrac{1}{2}}}{n^{-(\gamma\alpha + \varepsilon)}} \right] n^{-(\gamma\alpha + \varepsilon)} 
\geq
\mathcal{M} n^{-(\gamma\alpha + \varepsilon)}.
\end{split}
\end{equation}
This and \eqref{LB_nueq1_deterministic:eq2} establish that for all $n \in \N$ it holds that
\begin{equation}
\begin{split}
	\big(\EXP{\norm{\Theta_n-\EXp{X_1}}^2}\big)^{\nicefrac{1}{2}}  
\geq
	\left[
		\min \! \left\{ \mathcal{M} , \left[\tfrac{\gamma\alpha C\left(\EXp{\norm{X_1 - \EXp{X_1}}^2} \right)^{\!\nicefrac{1}{2}} }{m} \right] \right\} 
	\right] n^{-(\gamma\alpha + \varepsilon)}.
\end{split}
\end{equation}
The hypothesis that $\EXp{\norm{X_1 - \EXp{X_1}}^2} > 0$ and the fact that $\mathcal{M} > 0$ therefore establish \eqref{LB_nueq1_deterministic:concl1}.
The proof of Lemma~\ref{LB_nueq1_deterministic} is thus completed.
\end{proof}

\subsubsection{Errors due to the randomness in the SGD method}
\label{subsubsect:LB_stochastic}

\begin{lemma}
\label{LB_nueq1_variance}
Assume Setting~\ref{setting} and assume that $\nu =1$ and $\EXp{\norm{X_1-\EXp{X_1}}^2}  > 0$.
Then there exists $C \in (0,\infty)$ such that for all $n \in \N$ it holds that
\begin{equation}
\label{LB_nueq1_variance:concl1}
\big(\EXP{\norm{\Theta_n-\EXp{X_1}}^2}\big)^{\nicefrac{1}{2}} 
\geq
 C n^{-\nicefrac{1}{2} }.
\end{equation}
\end{lemma}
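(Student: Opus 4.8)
The plan is to observe that Lemma~\ref{LB_nueq1_variance} is nothing but the special case $\nu = 1$ of the lower bound already proved in Proposition~\ref{LB}. Proposition~\ref{LB} assumes Setting~\ref{setting} together with $\nu \leq 1$ and $\EXp{\norm{X_1-\EXp{X_1}}^2} > 0$ and, under these hypotheses, yields a constant $C \in (0,\infty)$ with the property that $(\EXp{\norm{\Theta_n-\EXp{X_1}}^2})^{\nicefrac{1}{2}} \geq C n^{-\nicefrac{\nu}{2}}$ holds for all $n \in \N$. In the situation of Lemma~\ref{LB_nueq1_variance} we are given exactly $\nu = 1$ (so in particular $\nu \leq 1$) and $\EXp{\norm{X_1-\EXp{X_1}}^2} > 0$, so the hypotheses of Proposition~\ref{LB} are satisfied verbatim.

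First I would apply Proposition~\ref{LB} to obtain such a constant $C \in (0,\infty)$. Then, since $\nu = 1$, the exponent collapses to $n^{-\nicefrac{\nu}{2}} = n^{-\nicefrac{1}{2}}$, and the claimed estimate \eqref{LB_nueq1_variance:concl1} follows at once for all $n \in \N$. There is essentially no obstacle to overcome: the entire analytic content---extracting the variance contribution through item~\eqref{explicit_expressions:item4} in Proposition~\ref{explicit_expressions} and estimating the resulting sum via Lemma~\ref{aux_LB_2} (which in turn relies on the exponential-limit Lemma~\ref{fundamental_limit})---has already been carried out in the proof of Proposition~\ref{LB}. The only point that merits a quick check is that the hypotheses match precisely, which they do.

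Conceptually, this lemma isolates the contribution of the randomness in the SGD dynamics, which forces the optimal rate of $\nicefrac{1}{2}$ as a floor on the error. I expect that it will later be combined with the deterministic-descent lower bound of Lemma~\ref{LB_nueq1_deterministic} (of order $n^{-(\gamma\alpha+\varepsilon)}$) by taking the larger of the two lower bounds, thereby producing a combined lower bound of order $n^{-\min\{\nicefrac{1}{2},\,\gamma\alpha+\varepsilon\}}$ that essentially matches the upper bound of Corollary~\ref{UB_nueq1_3}.
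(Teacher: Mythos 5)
Your proposal is correct and coincides exactly with the paper's own proof: Lemma~\ref{LB_nueq1_variance} is obtained by invoking Proposition~\ref{LB} (whose hypotheses $\nu \leq 1$ and $\EXp{\norm{X_1-\EXp{X_1}}^2} > 0$ are satisfied verbatim) and observing that for $\nu = 1$ the exponent $-\nicefrac{\nu}{2}$ equals $-\nicefrac{1}{2}$. Your concluding remarks about combining this bound with Lemma~\ref{LB_nueq1_deterministic} also match how the paper proceeds in Proposition~\ref{LB_nueq1}.
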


\begin{proof}[Proof of Lemma~\ref{LB_nueq1_variance}]
Note that Proposition~\ref{LB} and the hypothesis that $\nu = 1$ demonstrate that there exists $C \in (0,\infty)$ such that for all $n \in \N$ it holds that
\begin{equation}
\big(\EXP{\norm{\Theta_n-\EXp{X_1}}^2}\big)^{\nicefrac{1}{2}} 
\geq
 C n^{-\nicefrac{\nu}{2} } 
 =  
 C n^{-\nicefrac{1}{2} }.
\end{equation}
The proof of Lemma~\ref{LB_nueq1_variance} is thus completed.
\end{proof}

\subsubsection{Composition of the errors}
\label{subsubsect:LB_fast_composition}

\begin{prop}
\label{LB_nueq1}
Assume Setting~\ref{setting} and assume that $\nu =1$ and $\EXp{\norm{X_1-\EXp{X_1}}^2}  > 0$.
Then for every $\varepsilon \in (0,\infty)$ there exists $C \in (0,\infty)$ such that for all $n \in \N$ it holds that
\begin{equation}
\label{LB_nueq1:concl1}
\big(\EXP{\norm{\Theta_n-\EXp{X_1}}^2}\big)^{\nicefrac{1}{2}} 
\geq
 C n^{-\min \{ \nicefrac{1}{2},\gamma \alpha + \varepsilon \} }.
\end{equation}
\end{prop}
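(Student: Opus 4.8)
The plan is to obtain the claimed lower bound by simply combining the two independent lower bounds that have already been established in Lemma~\ref{LB_nueq1_variance} and Lemma~\ref{LB_nueq1_deterministic}. Observe that the hypotheses assumed here, namely $\nu = 1$ and $\EXp{\norm{X_1-\EXp{X_1}}^2} > 0$, are exactly the hypotheses of both of those results. Hence, for a fixed $\varepsilon \in (0,\infty)$, Lemma~\ref{LB_nueq1_variance} supplies a constant $C_1 \in (0,\infty)$ with $\big(\EXP{\norm{\Theta_n-\EXp{X_1}}^2}\big)^{\nicefrac{1}{2}} \geq C_1 n^{-\nicefrac{1}{2}}$ for all $n \in \N$, while Lemma~\ref{LB_nueq1_deterministic} (applied with the same $\varepsilon$) supplies a constant $C_2 \in (0,\infty)$ with $\big(\EXP{\norm{\Theta_n-\EXp{X_1}}^2}\big)^{\nicefrac{1}{2}} \geq C_2 n^{-(\gamma\alpha+\varepsilon)}$ for all $n \in \N$. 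Since the root mean square error dominates each of these two quantities simultaneously, it also dominates their maximum.

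The key arithmetic step I would record is that, because the map $x \mapsto n^x$ is non-decreasing for every $n \in \N$, it holds for all $n \in \N$ that $\max\{n^{-\nicefrac{1}{2}}, n^{-(\gamma\alpha+\varepsilon)}\} = n^{\max\{-\nicefrac{1}{2},\, -(\gamma\alpha+\varepsilon)\}} = n^{-\min\{\nicefrac{1}{2},\, \gamma\alpha+\varepsilon\}}$. With this in hand, I would set $C = \min\{C_1, C_2\} \in (0,\infty)$ and chain the two lower bounds together: for every $n \in \N$,
\begin{equation}
\big(\EXP{\norm{\Theta_n-\EXp{X_1}}^2}\big)^{\nicefrac{1}{2}}
\geq
\max\{C_1 n^{-\nicefrac{1}{2}}, \, C_2 n^{-(\gamma\alpha+\varepsilon)}\}
\geq
C \max\{n^{-\nicefrac{1}{2}}, \, n^{-(\gamma\alpha+\varepsilon)}\}
=
C n^{-\min\{\nicefrac{1}{2},\, \gamma\alpha+\varepsilon\}},
\end{equation}
which is precisely the asserted estimate \eqref{LB_nueq1:concl1}.

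This argument has essentially no obstacle; the only point meriting a moment of care is the recognition that $-\min\{\nicefrac{1}{2}, \gamma\alpha+\varepsilon\}$ is exactly the larger (that is, the slower-decaying) of the two exponents $-\nicefrac{1}{2}$ and $-(\gamma\alpha+\varepsilon)$, so that the error is governed by whichever of the two lower bounds decays more slowly. Conceptually, this reflects the two distinct sources of error: the total root mean square error can be no smaller than the irreducible stochastic fluctuation (yielding the rate $n^{-\nicefrac{1}{2}}$ via Lemma~\ref{LB_nueq1_variance}), and it can be no smaller than the error inherited from the underlying deterministic gradient descent dynamics (yielding the rate $n^{-(\gamma\alpha+\varepsilon)}$ via Lemma~\ref{LB_nueq1_deterministic}), whence the combined rate is dictated by the dominant of these two contributions.
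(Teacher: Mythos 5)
Your proposal is correct and follows exactly the same route as the paper's own proof: both combine the stochastic lower bound from Lemma~\ref{LB_nueq1_variance} and the deterministic-dynamics lower bound from Lemma~\ref{LB_nueq1_deterministic}, take the minimum of the two constants, and use the identity $\max\{n^{-\nicefrac{1}{2}}, n^{-(\gamma\alpha+\varepsilon)}\} = n^{-\min\{\nicefrac{1}{2},\,\gamma\alpha+\varepsilon\}}$. There is nothing to add or correct.
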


\begin{proof}[Proof of Proposition~\ref{LB_nueq1}]
Throughout this proof let $\varepsilon \in (0,\infty)$.
Note that Lemma~\ref{LB_nueq1_variance} demonstrates that there exists $c \in (0,\infty)$ such that for all $n \in \N$ it holds that
\begin{equation}
\label{LB_nueq1:eq1}
\begin{split}
\big(\EXP{\norm{\Theta_n-\EXp{X_1}}^2}\big)^{\nicefrac{1}{2}} 
\geq
c n^{-\nicefrac{1}{2}}.
\end{split}
\end{equation}
Moreover, observe that Lemma~\ref{LB_nueq1_deterministic} assures that there exists $C \in (0,\infty)$ such that for all $n \in \N$ it holds that
\begin{equation}
\begin{split}
\big(\EXP{\norm{\Theta_n-\EXp{X_1}}^2}\big)^{\nicefrac{1}{2}} 
\geq
C n^{-(\gamma\alpha + \varepsilon)}.
\end{split}
\end{equation}
Combining this and \eqref{LB_nueq1:eq1} ensures that for all $n \in \N$ it holds that
\begin{equation}
\begin{split}
	\big(\EXP{\norm{\Theta_n-\EXp{X_1}}^2}\big)^{\nicefrac{1}{2}} 
&\geq
	\max \! \left\{ c n^{-\nicefrac{1}{2}} , Cn^{-(\gamma\alpha + \varepsilon)}\right\} \\
&\geq
	\min \! \left\{ c, C \right\}\max \! \left\{ n^{-\nicefrac{1}{2}}, n^{-(\gamma\alpha + \varepsilon)} \right\} \\
&=
	\big[ \! \min \! \left\{ c, C \right\} \!\big] n^{\max \{ -\nicefrac{1}{2}, -(\gamma\alpha + \varepsilon)\}} \\
&=
	\big[ \! \min \! \left\{ c, C \right\} \!\big] n^{-\min \{\nicefrac{1}{2}, \gamma\alpha + \varepsilon\}}.
\end{split}
\end{equation}
The proof of Proposition~\ref{LB_nueq1} is thus completed.
\end{proof}

\subsection{Lower errors estimates in the case of very fast decaying learning rates}
\label{subsect:LB_very_fast}

In this subsection we establish in Lemma~\ref{LB_nubigger1} below that the SGD process in \eqref{setting:eq2} fails to converge to the global minimum of the objective function in the case of very fast decaying learning rates (corresponding to the case $\nu > 1$ in Setting~\ref{setting}). 

\begin{lemma}
\label{LB_nubigger1}
Assume Setting~\ref{setting} and assume that $\nu  > 1$ and $\EXp{\norm{X_1-\EXp{X_1}}^2} > 0$.
Then there exists $C \in (0,\infty)$ such that for all $n \in \N$ it holds that
\begin{equation}
	\big(\EXP{\norm{\Theta_n-\EXp{X_1}}^2}\big)^{\nicefrac{1}{2}} 
\geq
	C.
\end{equation}
\end{lemma}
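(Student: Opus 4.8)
The plan is to start from the exact bias--variance expression for $\EXP{\norm{\Theta_n - \EXp{X_1}}^2}$ furnished by item~\eqref{explicit_expressions:item4} in Proposition~\ref{explicit_expressions}, to discard the nonnegative bias term, and to keep only a single summand of the variance sum. Concretely, I would fix $m \in \N \cap (\gamma\alpha, \infty)$, so that $l^\nu > \gamma\alpha$ and hence $1-\tfrac{\gamma\alpha}{l^\nu} \in (0,1)$ for all $l \in \N \cap [m,\infty)$, and for every $n \in \N \cap [m,\infty)$ estimate
\begin{equation}
\EXP{\norm{\Theta_n - \EXp{X_1}}^2} \geq \EXP{\norm{X_1 - \EXp{X_1}}^2}\left[\tfrac{\gamma\alpha}{m^\nu}\left(\prod_{l = m+1}^n \Abs{1-\tfrac{\gamma\alpha}{l^\nu}}\right)\right]^2.
\end{equation}
In this way the whole problem reduces to bounding the tail product $\prod_{l=m+1}^n (1-\tfrac{\gamma\alpha}{l^\nu})$ from below by a strictly positive constant that is uniform in $n$.

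The key step, which I expect to be the main obstacle, is to show that this tail product does not degenerate to $0$ as $n \to \infty$. Here the hypothesis $\nu > 1$ is decisive, since it guarantees $\sum_{l=1}^\infty l^{-\nu} < \infty$. I would pass to logarithms and apply Lemma~\ref{LB_log} in the form $\log(1-y) \geq \tfrac{-y}{1-y}$ for $y \in (0,1)$ (obtained from Lemma~\ref{LB_log} with $x = 1-y$). Using that $1-\tfrac{\gamma\alpha}{l^\nu} \geq 1-\tfrac{\gamma\alpha}{(m+1)^\nu} =: \delta > 0$ for all $l \geq m+1$, I get the per-term bound $\log(1-\tfrac{\gamma\alpha}{l^\nu}) \geq -\tfrac{\gamma\alpha}{\delta l^\nu}$ and therefore
\begin{equation}
\log\!\left(\prod_{l = m+1}^n \Abs{1-\tfrac{\gamma\alpha}{l^\nu}}\right) = \sum_{l=m+1}^n \log\!\left(1-\tfrac{\gamma\alpha}{l^\nu}\right) \geq -\tfrac{\gamma\alpha}{\delta}\sum_{l=m+1}^\infty \tfrac{1}{l^\nu} > -\infty,
\end{equation}
so that $\prod_{l=m+1}^n (1-\tfrac{\gamma\alpha}{l^\nu}) \geq \exp\!\big(-\tfrac{\gamma\alpha}{\delta}\sum_{l=m+1}^\infty l^{-\nu}\big) > 0$ uniformly in $n \in \N \cap [m,\infty)$. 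Together with the hypothesis $\EXP{\norm{X_1 - \EXp{X_1}}^2} > 0$, this yields a positive constant bounding $\EXP{\norm{\Theta_n - \EXp{X_1}}^2}$ from below for all $n \geq m$.

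Finally, for the finitely many remaining indices $n \in \N \cap (0,m)$ I would invoke Lemma~\ref{positivity}, which applies precisely because $\EXP{\norm{X_1-\EXp{X_1}}^2} > 0$, to obtain $\EXP{\norm{\Theta_n-\EXp{X_1}}^2} > 0$, and then take the minimum of these finitely many strictly positive values together with the uniform bound from the previous paragraph. Taking square roots of the resulting positive lower bound produces the desired constant $C \in (0,\infty)$. The only genuinely nontrivial point is the uniform positivity of the tail product; once that is established via Lemma~\ref{LB_log} and the summability of $(l^{-\nu})_{l \in \N}$ for $\nu > 1$, the remainder is a routine combination with Lemma~\ref{positivity} and a minimum over finitely many indices.
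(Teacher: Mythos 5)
Your proposal is correct and follows essentially the same route as the paper's proof: both discard the bias term, retain only the $k = m$ summand of the variance sum from item~\eqref{explicit_expressions:item4} in Proposition~\ref{explicit_expressions} with $m \in \N \cap (\gamma\alpha,\infty)$, bound the tail product from below by passing to logarithms via Lemma~\ref{LB_log} together with a uniform lower bound on the factors and the summability of $\sum_{l} l^{-\nu}$ for $\nu > 1$, and finally handle the finitely many initial indices with Lemma~\ref{positivity}. The only cosmetic differences are your choice of $\delta = 1 - \tfrac{\gamma\alpha}{(m+1)^\nu}$ in place of the paper's $1 - \tfrac{\gamma\alpha}{m^\nu}$ and your appeal to summability without the explicit integral-comparison value $\tfrac{1}{\nu-1}$, neither of which affects correctness.
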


\begin{proof}[Proof of Lemma~\ref{LB_nubigger1}]
Throughout this proof let $m \in \N \cap (\gamma\alpha,\infty)$.
Observe that Lemma~\ref{positivity} and the hypothesis that $\EXp{\norm{X_1-\EXp{X_1}}^2} > 0$ ensure that for all $n \in \N$ it holds that
\begin{equation}
\label{LB_nubigger1:eq0}
\EXP{\norm{\Theta_n-\EXp{X_1}}^2} > 0.
\end{equation}
Next note that item~\eqref{explicit_expressions:item4} in Proposition~\ref{explicit_expressions} demonstrates that for all $n \in \N \cap (m,\infty)$ it holds that
\begin{equation}
\label{LB_nubigger1:eq1}
\begin{split}
\Exp{\norm{\Theta_n - \EXp{X_1}}^2} 
&=
\left[ 
	\prod_{l = 1}^n (1-\tfrac{\gamma\alpha}{l^\nu})
\right]^2 
\norm{\xi-\EXp{X_1}}^2 \\
&\quad + 
\Exp{\norm{X_1 - \EXp{X_1}}^2}
\left[
	\sum_{k = 1}^n 
		\left[
			\tfrac{\gamma\alpha}{k^\nu}\left( \prod_{l = k+1}^n (1-\tfrac{\gamma\alpha}{l^\nu})\right)
		\right]^2
\right] \\
&\geq
\Exp{\norm{X_1 - \EXp{X_1}}^2}
\left[
	\tfrac{\gamma\alpha}{m^\nu}\left( \prod_{l = m+1}^n (1-\tfrac{\gamma\alpha}{l^\nu})\right)
\right]^2 \\
&=
\left[ \tfrac{(\gamma\alpha)^2\EXp{\norm{X_1 - \EXp{X_1}}^2}  }{m^{2\nu}} \right]
\left[
	\prod_{l = m+1}^n \Abs{1-\tfrac{\gamma\alpha}{l^\nu}}
\right]^2.
\end{split}
\end{equation}
Moreover, note that the fact that $m > \gamma \alpha$ ensures that for all $l \in \N\cap[m,\infty)$ it holds that
\begin{equation}
	0 
< 
	1- \tfrac{\gamma\alpha}{m}
\leq	
	1- \tfrac{\gamma\alpha}{l}
\leq	
	1- \tfrac{\gamma\alpha}{l^\nu}
<
1.
\end{equation}
Therefore, we obtain that for all $l \in \N\cap[m,\infty)$ it holds that 
\begin{equation}
\label{LB_nubigger1:eq2}
\left(1-\tfrac{\gamma\alpha}{l^\nu}\right) \in (0,1).
\end{equation}
Lemma~\ref{LB_log} hence assures that for all $n \in \N \cap (m,\infty)$ it holds that
\begin{equation}
\label{LB_nubigger1:eq3}
\begin{split}
	&\log \! \left(  \prod_{l = m+1}^n \Abs{1-\tfrac{\gamma\alpha}{l^\nu}}\right)
=
	\sum_{l = m+1}^n  \log \! \left(   1-\tfrac{\gamma\alpha}{l^\nu} \right) \\
&\geq
	\sum_{l = m+1}^n \left( \frac{\left(1-\tfrac{\gamma\alpha}{l^\nu} \right) - 1}{\left(1-\tfrac{\gamma\alpha}{l^\nu}\right)} \right) 
=
	- \left[ \sum_{l = m+1}^n  \left( \frac{ \gamma\alpha}{l^\nu\left(1-\tfrac{\gamma\alpha}{l^\nu}\right)} \right) \right]  \\
&\geq
	- \left[\frac{ \gamma\alpha}{\left(1-\tfrac{\gamma\alpha}{m^\nu}\right)} \right] \left[ \sum_{l = m+1}^n  \frac{1}{l^\nu}  \right]  
\geq
	-\left[\frac{ \gamma\alpha}{\left(1-\tfrac{\gamma\alpha}{m^\nu}\right)} \right] 
	\left[ \sum_{l = 2}^\infty  \frac{1}{l^\nu}  \right].
\end{split}
\end{equation}
In addition, note that the hypothesis that $\nu > 1$ implies that for all $n \in \N$ it holds that
\begin{equation}
\begin{split}
	\sum_{l = 2}^\infty  \frac{1}{l^\nu} 
&= 
	\sum_{l = 2}^\infty  \left[ \int_{l-1}^l \frac{1}{l^\nu} \, \mathrm{d}x \right]
\leq  
	\sum_{l = 2}^\infty \left[ \int_{l-1}^l \frac{1}{x^\nu} \, \mathrm{d}x \right] \\
&= 
	\int_{1}^\infty x^{-\nu} \, \mathrm{d}x
= 
	\left[ \left(\tfrac{1}{(1-\nu )} \right) x^{1-\nu}\right]_{x = 1}^{x = \infty}
=
	-\tfrac{1}{(1-\nu )}
=
	\tfrac{1}{(\nu-1 )}.
\end{split}
\end{equation}
This and \eqref{LB_nubigger1:eq3} prove that for all $n \in \N \cap (m,\infty)$ it holds that
\begin{equation}
\begin{split}
	&\log \! \left(  \prod_{l = m+1}^n \Abs{1-\tfrac{\gamma\alpha}{l^\nu}}\right)
\geq
	-\left[\frac{ \gamma\alpha}{\left(1-\frac{\gamma\alpha}{m^\nu}\right)} \right]  \frac{1}{(\nu-1 )}
=
	\frac{ -\gamma\alpha}{\left(1-\frac{\gamma\alpha}{m^\nu}\right)(\nu-1 )}.
\end{split}
\end{equation}
Combining this and \eqref{LB_nubigger1:eq2}  with \eqref{LB_nubigger1:eq1} demonstrates that for all $n \in \N \cap (m,\infty)$ it holds that
\begin{equation}
\begin{split}
	\Exp{\norm{\Theta_n - \EXp{X_1}}^2} 
&\geq
	\left[ \tfrac{ (\gamma\alpha)^2\EXp{\norm{X_1 - \EXp{X_1}}^2} }{m^{2\nu}} \right]
	\exp \! \left( \! \log \! \left(
		\left[
			\prod_{l = m+1}^n \Abs{1-\tfrac{\gamma\alpha}{l^\nu}} 
		\right]^2
		\right) \right) \\
&=
	\left[ \tfrac{(\gamma\alpha)^2\EXp{\norm{X_1 - \EXp{X_1}}^2}  }{m^{2\nu}} \right]
	\exp \! \left( \! 2 \log \! \left(
			\prod_{l = m+1}^n \Abs{1-\tfrac{\gamma\alpha}{l^\nu}} 
		\right) \right) \\
&\geq
	\left[ \tfrac{(\gamma\alpha)^2\EXp{\norm{X_1 - \EXp{X_1}}^2}  }{m^{2\nu}} \right]
	\exp \! 
	\left( \!  
		\tfrac{ -2\gamma\alpha}{\left(1-\frac{\gamma\alpha}{m^\nu}\right)(\nu-1 )}
	\right).
\end{split}
\end{equation}
The hypothesis that $\EXp{\norm{X_1 - \EXp{X_1}}^2} > 0$, \eqref{LB_nubigger1:eq0}, and \eqref{LB_nubigger1:eq2} hence establish that for all $n \in \N$ it holds that
\begin{multline}
	\Exp{\norm{\Theta_n - \EXp{X_1}}^2} 
\geq
	\min \!
	\Bigg( \!
		\Big\{
			\Exp{\norm{\Theta_k - \EXp{X_1}}^2} \colon k \in \{1,2,\ldots, m\}
		\Big\} \\
		\quad \cup
		\left\{
			\left[ \tfrac{(\gamma\alpha)^2\EXp{\norm{X_1 - \EXp{X_1}}^2}  }{m^{2\nu}} \right]
			\exp \! 
			\left( \!  
				\tfrac{ -2\gamma\alpha}{\left(1-\frac{\gamma\alpha}{m^\nu}\right)(\nu-1 )}
			\right)
		\right\}	
	\Bigg)
>
	0.
\end{multline}
The proof of Lemma~\ref{LB_nubigger1} is thus completed.
\end{proof}

\subsection{Main result of this article}
\label{subsect:main_result}
The following theorem summarizes the main findings of this article.

\begin{theorem}
\label{main_theorem}
Let $d \in \N$, $\alpha,\gamma,\nu  \in (0,\infty)$, $\xi \in \R^d$, 
let $\lll \cdot, \cdot \rrr \colon \R^d \times \R^d \to \R$ be the $d$-dimensional Euclidean scalar product, 
let $\lrnorm{\cdot} \colon \R^d \to [0,\infty)$ be the $d$-dimensional Euclidean norm, 
let $(\Omega, \mathcal{F}, \P)$ be a probability space, 
let $X_n \colon \Omega \to \R^d$, $n \in \N$, be i.i.d.\ random variables with
$\Exp{\norm{X_1}^2} < \infty$
and
$\P(X_1= \EXp{X_1}) < 1$,
let $(r_{\varepsilon, i})_{\varepsilon \in (0,\infty), i \in \{0,1\}} \subseteq \R$ satisfy for all $\varepsilon \in (0,\infty)$, $i \in \{0,1\}$ that
\begin{equation}
r_{\varepsilon, i} =
\begin{cases}
	\nicefrac{\nu}{2} 																			&\colon \nu < 1 \\
	\min \{\nicefrac{1}{2}, \gamma \alpha + (-1)^{i} \varepsilon\}	&\colon \nu = 1  \\
	0																										&\colon \nu > 1,
\end{cases}
\end{equation}
let $F = ( F(\theta,x) )_{(\theta,x) \in \R^d \times \R^d} \colon \R^d \times \R^d \to \R$ and $f \colon \R^d \to \R$ be the functions which satisfy for all $\theta, x  \in \R^d$ that
\begin{equation}
F(\theta,x) = \tfrac{\alpha}{2} \norm{\theta-x}^2 \qandq f(\theta) = \EXP{F(\theta,X_1)},
\end{equation}
and let $\Theta \colon \N_0 \times \Omega \to \R^d$ be the stochastic process which satisfies for all $n \in \N$ that 
\begin{equation}
\begin{split}
\Theta_0 = \xi \qandq \Theta_n = \Theta_{n-1} - \tfrac{\gamma}{n^\nu} (\nabla_\theta F) (\Theta_{n-1},X_n).
\end{split}
\end{equation}
Then 
\begin{enumerate}[(i)]
\item \label{main_theorem:item1}
it holds for all $\theta \in \R^d$ that
$
f(\theta)
= 
\tfrac{\alpha}{2} \norm{\theta- \EXp{X_1}}^2  + 
\tfrac{\alpha}{2} \, \EXP{\norm{X_1 - \EXp{X_1}}^2},
$

\item \label{main_theorem:item2}
it holds that
$
\{\theta \in \R^d  \colon  f(\theta) = \inf\nolimits_{w \in \R^d} f(w)  \} = \{ \EXp{X_1} \},
$

\item \label{main_theorem:item3}
it holds for all $\theta \in \R^d$ that
$
\lll \theta - \EXp{X_1},(\nabla f)(\theta) \rrr   =  \alpha \norm{\theta - \EXp{X_1}}^2,
$

\item \label{main_theorem:item4}
it holds for all $\theta \in \R^d$ that
$
\norm{(\nabla f)(\theta)} =    \alpha \norm{\theta - \EXp{X_1}},
$

\item \label{main_theorem:item5}
it holds for all $\theta \in \R^d$ that
\begin{equation}
\EXP{\norm{(\nabla_\theta F) (\theta,X_1) - (\nabla f)(\theta)}^2} = \alpha^2 \, \EXP{ \norm{X_1 - \EXp{X_1}}^2},
\end{equation}

\item \label{main_theorem:item6}
for every $\varepsilon \in (0,\infty)$ there exist $C_0,C_1 \in (0,\infty)$ such that for all $n \in \N$ it holds that
\begin{equation}
C_0n^{-r_{\varepsilon,0}}
\leq
\big(\EXP{\norm{\Theta_n-\EXp{X_1}}^2}\big)^{\nicefrac{1}{2}} 
\leq
C_1  n^{-r_{\varepsilon,1}},
\end{equation}

and
\item \label{main_theorem:item7}
for every $\varepsilon \in (0,\infty)$ there exist $C_0,C_1 \in (0,\infty)$ such that for all $n \in \N$ it holds that
\begin{equation}
C_0  n^{-2r_{\varepsilon,0}} 
\leq
\Exp{f(\Theta_n)} - f(\EXp{X_1}) 
\leq
C_1 n^{-2r_{\varepsilon,1}}.
\end{equation}
\end{enumerate}
\end{theorem}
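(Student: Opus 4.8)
The plan is to obtain Theorem~\ref{main_theorem} as a direct synthesis of the elementary properties collected in Lemma~\ref{properties_loss} together with the upper and lower error bounds established throughout Sections~\ref{sect:UB} and~\ref{sect:LB}. The first step is to observe that the hypotheses of the theorem are precisely those of Setting~\ref{setting} and that the additional assumption $\P(X_1 = \EXp{X_1}) < 1$ is equivalent to the strict positivity of the variance, that is, to $\EXP{\norm{X_1 - \EXp{X_1}}^2} > 0$ (indeed, $\EXP{\norm{X_1 - \EXp{X_1}}^2} = 0$ would force $X_1 = \EXp{X_1}$ almost surely). This equivalence is the crucial bridge, since every lower-bound result, namely Proposition~\ref{LB}, Proposition~\ref{LB_nueq1}, and Lemma~\ref{LB_nubigger1}, is formulated under the positive-variance hypothesis. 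Granting this, items~\eqref{main_theorem:item1}--\eqref{main_theorem:item5} require no further work: they are read off verbatim from items~\eqref{properties_loss:item1}, \eqref{properties_loss:item2}, \eqref{properties_loss:item5}, \eqref{properties_loss:item6}, and~\eqref{properties_loss:item7} in Lemma~\ref{properties_loss}.

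For item~\eqref{main_theorem:item6} I would fix $\varepsilon \in (0,\infty)$ and split according to the three regimes encoded in the definition of $r_{\varepsilon,i}$. In the slowly decaying case $\nu < 1$ one has $r_{\varepsilon,0} = r_{\varepsilon,1} = \nicefrac{\nu}{2}$, the upper bound is supplied by Proposition~\ref{UB_nuleq1}, and the lower bound by Proposition~\ref{LB} (applicable since $\nu \leq 1$). In the fast decaying case $\nu = 1$ one has $r_{\varepsilon,1} = \min\{\nicefrac{1}{2}, \gamma\alpha - \varepsilon\}$ and $r_{\varepsilon,0} = \min\{\nicefrac{1}{2}, \gamma\alpha + \varepsilon\}$; the upper bound is exactly Corollary~\ref{UB_nueq1_3} and the lower bound is exactly Proposition~\ref{LB_nueq1}. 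In the very fast decaying case $\nu > 1$ one has $r_{\varepsilon,0} = r_{\varepsilon,1} = 0$, so that the claim reduces to the two-sided boundedness of the root mean square error, which is furnished by Lemma~\ref{UB_very_fast} (upper) and Lemma~\ref{LB_nubigger1} (lower). Care is needed only in the bookkeeping of the sign convention $(-1)^i$ in $r_{\varepsilon,i}$, matching the $+\varepsilon$ exponent to the lower bound and the $-\varepsilon$ exponent to the upper bound.

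Finally, item~\eqref{main_theorem:item7} is deduced from item~\eqref{main_theorem:item6} and item~\eqref{main_theorem:item1}. Since item~\eqref{main_theorem:item1} yields $f(\theta) - f(\EXp{X_1}) = \tfrac{\alpha}{2}\norm{\theta - \EXp{X_1}}^2$ for all $\theta \in \R^d$, taking $\theta = \Theta_n$ and applying the expectation gives $\Exp{f(\Theta_n)} - f(\EXp{X_1}) = \tfrac{\alpha}{2}\EXP{\norm{\Theta_n - \EXp{X_1}}^2}$. Squaring the two-sided estimate of item~\eqref{main_theorem:item6} and multiplying by $\tfrac{\alpha}{2}$ then produces the desired two-sided bound with constants $\tfrac{\alpha}{2}(C_0)^2$ and $\tfrac{\alpha}{2}(C_1)^2$, after relabelling the constants.

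I do not anticipate a genuine obstacle here: all the analytic effort has already been invested in the preceding propositions, and the present theorem is purely a matter of assembling them under a single set of hypotheses. The only points that demand attention are the verification of the variance equivalence in the first step and the correct alignment of the exponents $r_{\varepsilon,0}$ and $r_{\varepsilon,1}$ with the lower and upper bounds, respectively, in the borderline case $\nu = 1$.
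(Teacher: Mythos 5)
Your proposal is correct and follows essentially the same route as the paper: items (i)--(v) are read off from Lemma~\ref{properties_loss}, the positivity of $\EXP{\norm{X_1-\EXp{X_1}}^2}$ is deduced from $\P(X_1=\EXp{X_1})<1$ to unlock the lower bounds, item (vi) is assembled from Proposition~\ref{UB_nuleq1}, Corollary~\ref{UB_nueq1_3}, Lemma~\ref{UB_very_fast}, Proposition~\ref{LB}, Proposition~\ref{LB_nueq1}, and Lemma~\ref{LB_nubigger1}, and item (vii) follows from item (i) via the identity $\Exp{f(\Theta_n)} - f(\EXp{X_1}) = \tfrac{\alpha}{2}\,\Exp{\norm{\Theta_n-\EXp{X_1}}^2}$. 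Your case-by-case alignment of $r_{\varepsilon,0}$ and $r_{\varepsilon,1}$ with the lower and upper bounds is exactly what the paper's more compressed citation of those six results amounts to.
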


\begin{proof}[Proof of Theorem~\ref{main_theorem}]
First, note that items~\eqref{properties_loss:item1}, \eqref{properties_loss:item2}, \eqref{properties_loss:item5}, \eqref{properties_loss:item6}, and \eqref{properties_loss:item7} in Lemma~\ref{properties_loss} establish items~\eqref{main_theorem:item1}--\eqref{main_theorem:item5}.
In addition, observe that the hypothesis that $\P(X_1= \EXp{X_1}) < 1$ ensures that $\EXp{\norm{X_1-\EXp{X_1}}^2} > 0$.
Proposition~\ref{UB_nuleq1}, Corollary~\ref{UB_nueq1_3}, Lemma~\ref{UB_very_fast}, Proposition~\ref{LB}, Proposition~\ref{LB_nueq1}, and Lemma~\ref{LB_nubigger1} therefore prove item~\eqref{main_theorem:item6}.
Moreover, note that item~\eqref{main_theorem:item1} ensures that for all $n \in \N$ it holds that
\begin{equation}
\begin{split}
\Exp{f(\Theta_n)} - f(\EXp{X_1}) 
&= 
\Exp{\tfrac{\alpha}{2} \big(  \norm{\Theta_n- \EXp{X_1}}^2 + \EXP{\norm{X_1 - \EXp{X_1}}^2}\big)} \\
&\quad 
- \tfrac{\alpha}{2}\big( \norm{\EXp{X_1}- \EXp{X_1}}^2 + \EXP{\norm{X_1 - \EXp{X_1}}^2}  \big) \\
&=
\tfrac{\alpha}{2}\,\Exp{\norm{\Theta_n- \EXp{X_1}}^2}.
\end{split}
\end{equation} 
Combining this and item~\eqref{main_theorem:item6} establishes item~\eqref{main_theorem:item7}.
The proof of Theorem~\ref{main_theorem} is thus completed.
\end{proof}


\bibliographystyle{acm}
\bibliography{lower_bounds_bibfile}

\end{document}